\documentclass[reqno]{amsart}


\usepackage{amsmath,amsthm,amsfonts, amssymb,amscd}
\usepackage[all]{xy}
\oddsidemargin    =0cm \evensidemargin   =0cm \textwidth =16cm
\numberwithin{equation}{section}

\newtheorem{theorem}{Theorem}[section]
\newtheorem{lemma}[theorem]{Lemma}
\newtheorem{definition}[theorem]{Definition}
\newtheorem{proposition}[theorem]{Proposition}

\newcommand{\dx}{\, \mbox{\rm d}}

\newcommand{\Dim}{\mbox{\rm Dim}}
\newcommand{\Def}{\mbox{\rm Def}}

\usepackage{enumerate}

\begin{document}

\title[Lifting, $n$-Dimensional Spectral Resolutions, and $n$-Dimensional Observables]{Lifting, $n$-Dimensional Spectral Resolutions, and $n$-Dimensional Observables}
\author[A. Dvure\v{c}enskij, D. Lachman]{Anatolij Dvure\v{c}enskij$^{1,2}$, Dominik Lachman$^2$}
\maketitle

\begin{center}  \footnote{Keywords: Lifting, $n$-dimensional spectral resolution, $n$-dimensional observable, unital po-group, interpolation, MV-algebra, effect algebra, state, tribe, effect-tribe, Loomis--Sikorski theorem, joint observable

 AMS classification: 06D35, 06F20, 81P10

The paper has been supported by the grant of
the Slovak Research and Development Agency under contract APVV-16-0073 and the grant VEGA No. 2/0142/20 SAV, AD, and by grant CZ.02.2.69/0.0/0.0/16-027/0008482 SPP 8197200115, D.L.}
Mathematical Institute,  Slovak Academy of Sciences\\
\v Stef\'anikova 49, SK-814 73 Bratislava, Slovakia\\
$^2$ Depart. Algebra  Geom.,  Palack\'{y} Univer.\\
17. listopadu 12, CZ-771 46 Olomouc, Czech Republic\\

E-mail: {\tt
dvurecen@mat.savba.sk,\quad dominiklachman@seznam.cz}
\end{center}

\begin{abstract}
We show that under some natural conditions, we are able to lift an $n$-dimensional spectral resolution from one monotone $\sigma$-complete unital po-group into another one, when the first one is a $\sigma$-homomorphic image of the second one. We note that an $n$-dimensional spectral resolution is a mapping from $\mathbb R^n$ into a quantum structure which is monotone, left-continuous with non-negative increments and which is going to $0$ if one variable goes to $-\infty$ and it goes to $1$ if all variables go to $+\infty$. Applying this result to some important classes of effect algebras including also MV-algebras, we show that there is a one-to-one correspondence between $n$-dimensional spectral resolutions and $n$-dimensional observables on these effect algebras which are a kind of $\sigma$-homomorphisms from the Borel $\sigma$-algebra of $\mathbb R^n$ into the quantum structure. An important used tool are two forms of the Loomis--Sikorski theorem which use two kinds of tribes of fuzzy sets. In addition, we show that we can define three different kinds of $n$-dimensional joint observables of $n$ one-dimensional observables.
\end{abstract}

\section{Introduction}

Whenever we have a surjective homomorphism $\pi: A \to B$, where $A$ and $B$ are mathematical structures, we can ask whether there is a right inverse of $\pi$, that is a homomorphism $\psi: B \to A$ such that $\pi\circ \psi$ is the identity on $B$. The mapping $\psi$ is also said to be a lifting. In general, the answer is negative, but in some special cases, we can find a positive answer.  The first important result in this direction is the von Neumann-Maharam lifting theorem concerning probability spaces, see \cite{Mah}, \cite[Thm 341K]{Fre}.

Motivated by the notion of a lifting,  the aim of the present paper is two-fold:

(1) Let $(G,u)$ and $(H,v)$ be Abelian unital Dedekind $\sigma$-complete unital $\ell$-groups (or Dedekind monotone $\sigma$-complete unital po-groups with interpolation) and let $\pi:(G,u)\to (H,v)$ be a surjective $\sigma$-homomorphism. Given an $n$-dimensional spectral resolution $F:\mathbb R^n\to \Gamma(H,v)$, which is a left-continuous, monotone mapping with non-negative increments that is going to $0$ if one coordinate goes to $-\infty$ and to $1$ if all coordinates go to $+\infty$, find an $n$-dimensional spectral resolution $K:\mathbb R^n \to \Gamma(G,u)$ such that $\pi\circ K=F$.

(2) Applying (1), show that for some important cases of quantum structures there is a one-to-one correspondence between $n$-dimensional spectral resolutions and $n$-dimensional observables.

The reason for such a study is rasing up from the study of observables and spectral resolutions on quantum structures. We recall that quantum structures are nowadays different algebraic structures (orthomodular lattices, orthomodular posets, orthoalgebras, effect algebras, MV-effect algebras, etc., see \cite{DvPu}) which model quantum mechanical events. As it is well-known, Kolmogorov probability models are not adequate structures for describing measurement in quantum mechanics, see \cite{BiNe,Var}. In the last three decades, effect algebras \cite{FoBe} together with MV-effect algebras became the most important algebras of quantum structures. Their orthodox example is the system $\mathcal E(H)$ of Hermitian operators of a real, complex or quaternionic Hilbert space $H$ that are between the zero and identity operator. It is the interval in the partially ordered group $\mathcal B(H)$ of all Hermitian operators. Another quantum structure connected with a Hilbert space quantum mechanics is the system $\mathcal P(H)$ of all orthogonal projectors in $H$; we note that $\mathcal P(H)$ is a complete orthomodular lattice.

Every measurement in classical physics is modeled within frames of a Kolmogorov model $(\Omega,\mathcal S,P)$, where $P$ is the probability measure on the $\sigma$-algebra $\mathcal S$ of subsets of a set $ \Omega \ne \emptyset$ of elementary events. Then we measure by a random variable $f:\Omega \to \mathbb R$ which is $\mathcal S$-measurable, i.e. the mapping $x_f:\mathcal B(\mathbb R)\to \mathcal S$ given by $x_f(A)=f^{-1}(A)$, $A\in \mathcal B(\mathbb R)$, is a kind of a $\sigma$-homomorphism; we call this homomorphism an observable. If we measure in non-classical physics, then by an observable is meant a kind of a $\sigma$-homomorphism from the Borel $\sigma$-algebra $\mathcal B(\mathbb R)$ into a quantum structure. For example, observables in $\mathcal E(H)$ are so-called positive operator-valued measures, and ones in $\mathcal P(H)$ are projector-valued measures, see e.g. \cite{DvPu}.

If we measure simultaneously more variables, then the Heisenberg uncertainty principle appearing in quantum mechanics does not allow in many important cases to use a Kolmogorov model. For measurement of $n$ quantities, we use $n$-dimensional observables which are also a kind of $\sigma$-homomorphisms defined on the Borel $\sigma$-algebra $\mathcal B(\mathbb R^n)$ with values in the quantum structure. If $x$ is an $n$-dimensional observable, then the mapping $F(s_1,\ldots,s_n):=x((-\infty,s_1)\times \cdots\times (-\infty,s_n))$, $s_1,\ldots,s_n \in \mathbb R$, is an $n$-dimensional spectral resolution which can be characterized as a mapping on $\mathbb R^n$ with values in the algebra of events which is monotone, left-continuous with non-negative increments going to $0$ if one variable goes to $-\infty$ and going to $1$ if all variables go to $+\infty$. Therefore, there is an important question when does $n$-dimensional spectral resolution imply the existence of the corresponding $n$-dimensional observable?

A positive answer to this question in the case of one-dimensional spectral resolutions was given in  \cite{Cat} for quantum  logics and in \cite{DvKu} for the case of $\sigma$-complete MV-algebras as well as for monotone $\sigma$-complete effect algebras with the Riesz Decomposition Property (RDP). For two-dimensional spectral resolutions this problem was solved in \cite{DvLa1}. For more-dimensional ones we have partial results for special $\sigma$-complete  MV-algebras in \cite{DvLa2} using measure-theoretical reasoning from \cite{RiNe,Fre}. A general solution was not possible to present using methods from \cite{DvLa1,DvLa2}, and therefore, we developed methods using lifting of the $n$-dimensional spectral resolutions.

The paper is organized as follows. The necessary information about quantum  structures is gathered in Section 2 and $n$-dimensional spectral resolutions and their connections to $n$-dimensional observables are explained in Section 3. Basic results, lifting of spectral resolutions when the surjective homomorphism satisfies the lifting property, are given in Section 4. In Section 5, we establish a one-to-one correspondence between $n$-dimensional spectral resolutions and $n$-dimensional observables on $\sigma$-complete MV-effect algebras and monotone $\sigma$-complete effect algebras with (RDP). To show that, we use the established lifting. In addition, we develop also another approach using Loomis--Sikorski theorems for our algebras. Finally, in Section 6, we apply results of Section 5 to introduce two different kinds of $n$-dimensional joint observables.

\section{Basic Notions and Results}

In the section, we gather the necessary definitions and results which will be used in the text.

In the nineties, Foulis and Bennett introduced in \cite{FoBe} a new quantum structure, called an {\it effect algebra} which is a partial algebra $E=(E;+,0,1)$ with a partially defined operation $+$ that is an associative and commutative operation such that

\begin{enumerate}

\item[(i)] for any $a \in E$, there exists a unique element $a'
\in E$ such that $a+a'=1$;

\item[(ii)] if $a+1$ is defined in $E$, then $a=0$.
\end{enumerate}
If we have $a+c=b$, then we put $a\le b$ and $\le$ is a partial ordering on $E$ such that $0$ and $1$ are the least and last elements of $E$. For the unique element $c\in E$ such that $a+c=b$, we write $c = b-a$.

Very important examples of effect algebras are originated in Abelian po-groups. We remind that an Abelian group $(G;+,-,0)$ with a fixed partial order $\le$ is a {\it partially ordered group} (po-group in abbreviation), if $g\le h$ for $g,h \in G$, implies $g+k\le h+k$ for each $k \in G$. If the order $\le$ is a lattice one, we say that $G$ is a lattice-ordered group ($\ell$-group for short). We note that all po-groups used in the paper are Abelian, so we will not underline that a po-group is Abelian. A positive cone of a po-group $G$ is the set $G^+=\{g \in G\colon g\ge 0\}$. An element $u\in G^+$ is a {\it strong unit} of $G$, if given $g\in G$, there is an integer $n\ge 1$ such that $g\le nu$. A couple $(G,u)$, where $G$ is a po-group with a fixed strong unit $u\in G^+$, is said to be a {\it unital po-group}. A po-group $G$ is with {\it interpolation} if $g_1,g_1\le h_1,h_2$ entails an element $k\in G$ such that $g_1,g_2\le k \le h_1,h_2$. For more information about po-groups, we recommend the monograph \cite{Goo}.

Now, let $(G,u)$ be a unital po-group, and let $[0,u]=\{g\in G\colon 0\le g\le u\}$. Then $\Gamma^e_a(G,u)=([0,u];+,0,u)$ is an effect algebra, called an interval one. A sufficient condition to be $E$ an interval effect algebra is the {\it Riesz Decomposition Property} ((RDP), in abbreviation): If $a_1+a_2=b_1+b_2$, there are four elements $\{c_{ij}\in E\colon i,j \in \{1,2\}\}$ such that $a_1=c_{11}+c_{12}$, $a_2=c_{21}+c_{22}$, $b_1=c_{11}+c_{21}$ and $b_2=c_{12}+c_{22}$. The principal representation result, \cite{Rav}, says that given an effect algebra $E$ with (RDP), there is a unique (up to isomorphism of unital po-groups) unital po-group $(G,u)$ such that $E \cong \Gamma^e_a(G,u)$.

The operation $+$ is associative, so that we can write $\sum_{i=1}^n a_i$ for $a_1,\ldots,a_n$ whenever it exists in $E$. An infinite system of elements $\{a_t\}_{t \in T}$ is said to be {\it summable}, if for each finite subset $P$ of $T$, the element $a_P=\sum_{t\in P}a_t$ exists in $E$, and in addition, if the element $a=\bigvee\{a_P\colon P\subseteq T \mbox{ \rm finite }\}$ is defined in $E$, the element $a$ is said to be the {\it sum} of $\{a_t\colon t \in T\}$ and we write $a=\sum_{t \in T}a_t$.

An important subclass of effect algebras consists of MV-effect algebras which are equivalent to MV-algebras. We say that an MV-{\it algebra} is an algebra $M=(M;\oplus,',0,1)$ of type $(2,1,0,0)$, where $(M;\oplus,0)$ is a commutative monoid with the neutral element $0$ and for all $a,b\in M$, we have:
\begin{enumerate}
	\item[(i)] $a''=a$;
	\item[(ii)] $a\oplus 1=1$;
	\item[(iii)] $a\oplus (a\oplus b')'=b\oplus (b\oplus a')'$.
\end{enumerate}
In any MV-algebra $(M;\oplus,',0,1)$, we can also define the following term operation:
\[
a\odot b:=(a'\oplus b')'.
\]
Property (iii) implies that $a\vee b=a\oplus (a\oplus b')'$ is the join of $a$ and $b$, and then $a\le b$ if there is $c\in M$ such that $a\oplus c=a$, or equivalently, if $a\vee b = b$, is a partial order on $M$ and $M$ is a distributive lattice.

If we take a unital $\ell$-group $(G,u)$, and $[0,u]=\{g \in G\colon 0\le g \le u\}$, then $\Gamma(G,u)=([0,u];\oplus,',0,u)$, where $a\oplus b =(a+b)\wedge u$, $a'=u-a$, $a,b \in[0,u]$, is an MV-algebra. Due to \cite{Mun}, for every MV-algebra, there is a unique unital $\ell$-group $(G,u)$ such that $M\cong \Gamma(G,u)$. For more info about MV-algebras, we recommend the book \cite{CDM}.

On every MV-algebra, we can define a partial addition $+$ such that $a+b$ is defined iff $a\odot b = 0$ and then $a+b:= a\oplus b$. Then $(M;+,0,1)$ is an effect algebra (called an {\it MV-effect algebra}) with (RDP) which is a lattice. We note that MV-algebras play a similar role as do Boolean algebras for orthomodular lattices.

We say that a poset $(G;\le)$ is {\it monotone $\sigma$-complete} provided that every ascending (descending) sequence $x_1\le x_2\le \cdots$ ($x_1\ge x_2\ge \cdots$) in $G$ which is bounded above (below) in $G$ has a supremum (infimum) in $G$.  If $G$ is a po-group, then a monotone $\sigma$-complete po-group $G$ is said to be also {\it Dedekind $\sigma$-complete}.

A {\it state} on an effect algebra $E$ is a mapping $s:E \to [0,1]$ such that (i) $s(1)=1$ and (ii) $s(a+b)=s(a)+s(b)$ whenever $a+b$ exists in $E$. We denote by $\mathcal S(E)$ the set of all states on $E$. It can happen that the state space $\mathcal S(E)$ is empty, see e.g. \cite{Gre}. However, if $E\ne \{0\}$ has (RDP), then it possesses at least one state. A state $s$ is {\it extremal} if from $s=\lambda s_1 +(1-\lambda)s_2$, where $s_1,s_2\in \mathcal S(E)$ and $\lambda \in (0,1)$, we conclude $s=s_1=s_2$. We denote by $\mathcal \partial S(E)$ the set of extremal states on $E$. A state $s$ is $\sigma$-additive if from $a=\sum_{n=1}^\infty a_n$, we have $s(a)=\sum_{n=1}^\infty s(a_n)$.

A state on a unital po-group $(G,u)$ is a group homomorphism $s:G \to \mathbb R$ such that (i) $s(g)\ge 0$ whenever $g \in G^+$ and (ii) $s(u)=1$. The restriction of any state on $(G,u)$ onto $E=\Gamma^e_a(G,u)$ is a state on $E$, and conversely, if $E$ has (RDP), then every state on $\Gamma^e_a(G,u)$ can be uniquely extended to a state on $(G,u)$.

We say that a net of states $\{s_\alpha\}_\alpha$ {\it converges weakly} to a state $s$ if $s(a)=\lim_\alpha s_\alpha(a)$ for each $a\in E$. The weak topology is a Hausdorff one. If $E$ is an MV-algebra, then $\mathcal S(E)$ is a compact space, if $E$ is an effect algebra with (RDP), then the state space is a Choquet simplex, \cite[Thm 10.17]{Goo}, not necessarily compact. According to a delicate result of Choquet \cite[p. 49]{Alf}, $\partial\mathcal S(E)$ is always a Baire space, i.e. the Baire Category Theorem holds for $\partial \mathcal S(E)$.

As a basic source of information about effect algebras and for unexplained notions and results, we recommend to consult with \cite{DvPu}.

\section{Basic Properties of $n$-Dimensional Spectral Resolutions and $n$-Dimensional Observables}

In the section, we define the main notions of the paper~-- $n$-dimensional spectral resolutions and $n$-dimensional observables defined on monotone $\sigma$-complete effect algebras and on $\sigma$-complete MV-algebras, and we present the main properties of $n$-dimensional spectral resolutions

Let $(s_1,\ldots,s_n), (t_1,\ldots,t_n)\in \mathbb R^n$ be two $n$-tuples of reals. If we write $(s_1,\ldots,s_n)\le (t_1,\ldots,t_n)$, this means that $s_i\le t_i$ for each $i=1,\ldots,n$. Then $(s_1,\ldots,s_n)< (t_1,\ldots,t_n)$ means that each $s_i\le t_i$ and for some $i=1,\ldots,n$, $s_i<t_i$. The strict ordering $(s_1,\ldots,s_n)\ll (t_1,\ldots,t_n)$ means $s_i<t_i$ for each $i=1,\ldots,n$.

Let $n\ge 1$ be a fixed integer. For each $i=1,\ldots,n$ and for all are real numbers $a_i \le b_i$,
we define an operator $\Delta_i(a_i,b_i)$ which, for any mapping $H: \mathbb R^n\to G$, assigns $\Delta_i(a_i,b_i)H(s_1,\ldots,s_n)=H(s_1,\ldots,s_{i-1},b_i, s_{i+1},\ldots,s_n) - H(s_1,\ldots,s_{i-1},a_i, s_{i+1},\ldots,s_n)$.

An $n$-dimensional spectral resolution was defined in \cite{DvLa2} as follows:

\begin{definition}\label{de:3.1}
{\rm Let $(G,u)$ be a Dedekind monotone $\sigma$-complete po-group and let $n\ge 1$ be an integer. An $n$-{\it dimensional spectral resolution} on $E=\Gamma^e_a(G,u)$ is any mapping $F:\mathbb R^n \to \Gamma^e_a(G,u)$ such that}

\begin{equation}\label{eq:(3.1)}
F(s_1,\ldots,s_n)\le F(t_1,\ldots,t_n) \quad \mbox{\rm if}\quad (s_1,\ldots,s_n)\le (t_1,\ldots,t_n),
\end{equation}
\begin{equation}\label{eq:(3.2)} \bigvee_{(s_1,\ldots,s_n)}F(s_1,\ldots,s_n)=u,
\end{equation}
\begin{equation}\label{eq:(3.3)} \bigvee_{(s_1,\ldots,s_n)\ll(t_1,\ldots,t_n)}F(s_1,\ldots,s_n) = F(t_1,\ldots,t_n),
\end{equation}
\begin{equation}\label{eq:(3.4)}
\bigwedge_{t_i} F(s_1,\ldots, s_{i-1},t_i, s_{i+1},\ldots, s_n)=0 \mbox{ \rm for } i=1,\ldots,n,
\end{equation}

\begin{equation}\label{eq:(3.5)}
\Delta_{1}(a_1,b_1)\cdots \Delta_{n}(a_n,b_n)F(s_1,
\ldots,s_n)\ge 0\ \mbox{ (volume condition)}.
\end{equation}
\end{definition}
It is important to note that monotonicity of $F$ and Dedekind $\sigma$-completeness of $G$ entails that all suprema and infimum on the left-hand sides of (\ref{eq:(3.2)})--(\ref{eq:(3.4)}) exist in $G$. Indeed, to see (\ref{eq:(3.2)}), let $\{(s^l_1,\ldots,s^l_n)\}_l$ and $\{(u^l_1,\ldots,u^l_n)\}_l$ be two non-decreasing sequences in $\mathbb R^n$ going to $(+\infty,\ldots,+\infty)$. The monotonicity of $F$ entails that the following suprema exist in $G$ and
$$
\bigvee_l F(s^l_1,\ldots,s^l_n)=\bigvee_{(s_1,\ldots,s_n)} F(s_1,\ldots,s_n)= \bigvee_l F(u^l_1,\ldots,u^l_n).
$$
Similarly, if $\{(s^l_1,\ldots,s^l_n)\}_l\nearrow (t_1,\ldots,t_n)$ and $\{(u^l_1,\ldots,u^l_n)\}_l\nearrow (t_1,\ldots,t_n)$ are two sequences of reals in $\mathbb R^n$ such that $(s^l_1,\ldots,s^l_n), (t^l_1,\ldots,t^l_n)\ll (t_1,\ldots,t_n)$ for each $l\ge 1$, then the next elements exist in $G$ and
\begin{align*}
\bigvee_l F(s^l_1,\ldots,s^l_n)&=\bigvee_{(s_1,\ldots,s_n)\ll (t_1,\ldots,t_n)} F(s_1,\ldots,s_n)= \bigvee_{(s_1,\ldots,s_n)\le (t_1,\ldots,t_n)} F(s_1,\ldots,s_n)= \bigvee_l F(u^l_1,\ldots,u^l_n)\\
&=F(t_1,\ldots,t_n).
\end{align*}
In a similar way we can establish that also the infimum in (\ref{eq:(3.4)}) exists in $G$.


We note that $n$-dimensional spectral resolutions are strongly connected with the so-called $n$-dimensional observables. We remind that an {\it $n$-dimensional observable} on a monotone $\sigma$-complete effect algebra $E$ is a mapping $x$ defined on the Borel $\sigma$-algebra $\mathcal B(\mathbb R^n)$ with values in $E$ such that (i) $x(\mathbb R^n)=1$, (ii) $x(A\cup B)=x(A)+x(B)$ whenever $A\cap B=\emptyset$, and (iii) $\{A_i\}_i\nearrow A$ implies $\bigvee_i x(A_i)=x(A)$.

If, given an $n$-dimensional observable $x$ on $E=\Gamma^e_a(G,u)$, we define
a function $F_x: \mathbb R^n \to \Gamma^e_a(G,u)$ by
$$
F_x(s_1,\ldots,s_n)=x((-\infty,s_1)\times \cdots \times (-\infty,s_n)),\quad (s_1,\ldots,s_n)\in \mathbb R^n,
$$
then $F_x$ is an $n$-dimensional spectral resolution. We note that the volume condition means that if $A=\langle a_1,b_1)\times\cdots\times \langle a_n,b_n)$, $a_i\le b_i$ for each $i=1,\ldots,n$, denotes an $n$-dimensional semi-closed block, then
$$
\Delta_{1}(a_1,b_1)\cdots\Delta_{n}(a_n,b_n)F_x(s_1,
\ldots,s_n)=x(A)\ge 0.
$$

Now we present an example of an $n$-dimensional observable and consequently of an $n$-dimensional spectral resolution: Let $\{t_k\}_k$ be a finite or countable set of mutually different elements of $\mathbb R^n$ and let $\{a_k\}_k$ be a finite or countable family of summable elements of $\Gamma^e_a(G,u)$ such that $\sum_k a_k =u$, where $G$ is a Dedekind monotone $\sigma$-complete po-group. Then
$$
x(A)=\sum_{k\colon t_k\in A} a_k,\quad A\in \mathcal B(\mathbb R^n),
$$
is an $n$-dimensional observable and $F_x$ is an example of an $n$-dimensional spectral resolution.

Our main task is to show the converse statement, i.e. given an $n$-dimensional spectral resolution $F$, find a unique $n$-dimensional observable $x$ such that  $F(s_1,\ldots,s_n)=x((-\infty,s_1)\times \cdots \times (-\infty,s_n))$ holds for each $(s_1,\ldots,s_n)\in \mathbb R^n$.

Now, we exhibit the basic properties of $n$-dimensional spectral resolutions which were established in \cite[Prop 3.3]{DvLa2}.

\begin{proposition}\label{pr:prop}
Let $F$ be an $n$-dimensional spectral resolution on $\Gamma^e_a(G,u)$, where $(G,u)$ is a unital Dedekind monotone $\sigma$-complete po-group.

{\rm (1)} If $(i_1,\ldots,i_n)$ is any permutation of $(1,\ldots,n)$, then
\begin{equation}\label{eq:(3.6)}
\Delta_{1}(a_1,b_1)\cdots \Delta_{n}(a_n,b_n)F(s_1,
\ldots,s_n) = \Delta_{i_1}(a_{i_1},b_{i_1})\cdots \Delta_{i_n}(a_{i_n},b_{i_n})F(s_1,\ldots,s_n).
\end{equation}

{\rm (2)} If $i_1,\ldots,i_k$ are mutually different integers from $\{1,\ldots,n\}$ for $1\le k< n$, then
\begin{equation}\label{eq:(3.7)}
0\le \Delta_{i_1}(a_{i_1},b_{i_1})\cdots \Delta_{i_k}(a_{i_k},b_{i_k})F(s_1,\ldots,s_n)\le u.
\end{equation}

Given a semi-closed block $A=\langle a_1,b_1)\times \cdots\times \langle a_n,b_n)$, we define $V_F(A)$ to be the left-hand side of the volume condition {\rm (\ref{eq:(3.5)})}, then $V_F(A)\in \Gamma^e_a(G,u)$.
\end{proposition}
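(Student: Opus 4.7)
Part~(1) follows directly from the definitions: the operator $\Delta_i(a_i,b_i)$ modifies only the $i$-th coordinate of its argument, so for any $i\ne j$ and any mapping $H\colon\mathbb R^n\to G$, both orderings of $\Delta_i(a_i,b_i)\Delta_j(a_j,b_j)H$ yield the same four-term alternating sum in the arguments $a_i,b_i,a_j,b_j$. An induction on the number of adjacent transpositions needed to realize an arbitrary permutation $(i_1,\ldots,i_n)$ then proves~(\ref{eq:(3.6)}).

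For Parts~(2) and~(3), the plan is to extend the block functional $V_F$ to a wider class of \emph{generalized blocks} whose $i$-th factor is one of $(-\infty,b_i)$, $\langle a_i,b_i)$, $\langle a_i,+\infty)$, or $\mathbb R$. To each factor type one associates a one-dimensional increment: $F(b_i)$, $F(b_i)-F(a_i)$, $\bigvee_{s_i}F-F(a_i)$, and $\bigvee_{s_i}F$ respectively; applied in all $n$ coordinates this defines a functional $\widetilde V_F$ extending $V_F$, with the requisite suprema existing in the Dedekind monotone $\sigma$-complete $G$ thanks to monotonicity of $F$ and axioms (\ref{eq:(3.2)})--(\ref{eq:(3.4)}). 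The crucial properties of $\widetilde V_F$ are \emph{additivity} under axis-parallel decompositions and \emph{non-negativity}. Additivity follows from the one-dimensional telescoping identity $\Delta_i(a_i,c_i)+\Delta_i(c_i,b_i)=\Delta_i(a_i,b_i)$, extended to the unbounded endpoints by monotone convergence. Non-negativity is proved by induction on the number of unbounded factors: the base case is exactly the volume condition (\ref{eq:(3.5)}), while a generalized block with an $\langle a_i,+\infty)$, $(-\infty,b_i)$, or $\mathbb R$ factor is expressed as a monotone supremum of ordinary bounded blocks and inherits non-negativity from them. Iterating the ``$\bigvee_{s_i}F$'' increment in every coordinate gives $\widetilde V_F(\mathbb R^n)=\bigvee_{(s_1,\ldots,s_n)}F=u$ by (\ref{eq:(3.2)}).

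Parts~(2) and~(3) then follow by decomposition. For~(3), cut $\mathbb R^n$ at $a_i$ and $b_i$ in each coordinate to obtain $3^n$ cells, one of which is $A$; additivity and non-negativity give
\[
u=\widetilde V_F(\mathbb R^n)=V_F(A)+\sum_{\text{other cells}}\widetilde V_F(\cdot)\ge V_F(A)\ge 0,
\]
so $V_F(A)\in[0,u]$. For~(2), use (\ref{eq:(3.6)}) to reorder the operators and observe that $\Delta_{i_1}(a_{i_1},b_{i_1})\cdots\Delta_{i_k}(a_{i_k},b_{i_k})F(s_1,\ldots,s_n)$ equals $\widetilde V_F$ applied to the generalized block $\prod_{i\in I}\langle a_i,b_i)\times\prod_{j\notin I}(-\infty,s_j)$ with $I=\{i_1,\ldots,i_k\}$; cutting $\mathbb R^n$ at the boundaries $a_i,b_i$ ($i\in I$) and $s_j$ ($j\notin I$) and applying the same identity yields the $[0,u]$-bound.

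The main obstacle is the non-negativity of $\widetilde V_F$ on generalized blocks with several unbounded factors. Unlike in a lattice-ordered group, one cannot freely commute suprema/infima with the $\Delta$-operators; instead one must exploit the monotonicity of $V_F$ under block inclusion (itself a consequence of additivity plus the volume condition) to realise the generalized-block value as an honest monotone supremum in $G$. This is where the boundary conditions (\ref{eq:(3.2)}) and (\ref{eq:(3.4)}) enter in an essential way.
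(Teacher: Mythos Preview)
Your argument for Part~(1) matches the paper's (which in fact does not write it out at all): the operators $\Delta_i$ act on distinct coordinates, so they commute pairwise, and transpositions generate all permutations.

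For Parts~(2) and~(3) your route is genuinely different from the paper's. The paper argues directly: starting from the full volume condition~(\ref{eq:(3.5)}), it sends the ``unused'' lower endpoints $a_{j}\searrow -\infty$ (using~(\ref{eq:(3.4)}) together with the fact that $\{a_i+b_i\}\searrow a+b$ whenever $\{a_i\}\searrow a$ and $\{b_i\}\searrow b$) to obtain $\Delta_{i_1}\cdots\Delta_{i_k}F\ge 0$; the upper bound then follows by the simple downward induction
\[
\Delta_{i_1}\cdots\Delta_{i_k}F \;=\; \Delta_{i_1}\cdots\Delta_{i_{k-1}}F(\ldots,b_{i_k},\ldots)-\Delta_{i_1}\cdots\Delta_{i_{k-1}}F(\ldots,a_{i_k},\ldots)\;\le\;\Delta_{i_1}\cdots\Delta_{i_{k-1}}F(\ldots,b_{i_k},\ldots)\;\le u.
\]
So the paper never needs $\langle a_i,+\infty)$-type factors or a decomposition of $\mathbb R^n$.

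Your approach instead builds an extended block functional $\widetilde V_F$ on generalized blocks (including unbounded factors), establishes additivity and non-negativity, and then reads off the $[0,u]$-bound by carving $\mathbb R^n$ into finitely many cells. This is correct, and you are right to flag the only delicate point: non-negativity for several simultaneous unbounded factors must be obtained as a monotone limit in $G$, using that $V_F$ is monotone under block inclusion (so the approximating sequence is increasing and bounded in $[0,u]$ by the inductive hypothesis, hence its supremum exists in the Dedekind monotone $\sigma$-complete $G$). What your approach buys is a cleaner conceptual picture of $V_F$ as a finitely additive, nonnegative set function on semi-closed rectangles with total mass $u$ --- which is exactly the viewpoint the paper adopts informally in the paragraphs following the proposition. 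What the paper's approach buys is economy: no auxiliary functional is needed, and the upper bound falls out of a two-line induction rather than a $3^{|I|}2^{n-|I|}$-cell decomposition.
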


\begin{proof}
First we note that if $\{a_i\}_i\searrow a$ and $\{b_i\}_i\searrow b$, then \begin{equation}\label{eq:mono}
\{a_i+b_i\}_i\searrow a+b.
\end{equation}
We outline only the proof of (2).
Let $j_1,\ldots,j_{n-k}$ be those indices from $\{1,\ldots,n\}$ which are different of $i_1,\ldots,i_k$. Let us expands the volume condition (\ref{eq:(3.5)}) and let express it in the form $L\ge R$, where on both sides are now only non-negative terms. First we are going with $a_{i_l}\searrow -\infty$ in $R$, so that  $\bigwedge_{a_{i_l}}v=0$ by (\ref{eq:(3.4)}) for $l=1,\ldots,n-k$ and apply (\ref{eq:mono}) for each term $v$ in $R$ containing $a_{i_l}$; we obtain $\hat R$ and $L\ge \hat R$. Then we do the same with $L$ and we obtain $\hat L$. Hence, $\hat L\ge \hat R$, so that
$$
\Delta_{i_1}(a_{i_1},b_{i_1})\cdots \Delta_{i_k}(a_{i_k},b_{i_k})F(s_1,\ldots,s_n)\ge 0.
$$
Consequently,

\begin{align*}
&\Delta_{i_{k+1}}(a_{i_{k+1}},b_{i_{k+1}})
\Delta_{i_1}(a_{i_1},b_{i_1})\cdots \Delta_{i_k}(a_{i_k},b_{i_k})F(s_1,\ldots,s_n)=\\
&\Delta_{i_1}(a_{i_1},b_{i_1})\cdots \Delta_{i_k}(a_{i_k},b_{i_k})F(s_1,\ldots,b_{i_{k+1}},\ldots, s_n)-
\Delta_{i_k}(a_{i_k},b_{i_k})F(s_1,\ldots,a_{i_{k+1}},\ldots, s_n)\ge 0,
\end{align*}
that is
\begin{equation}\label{eq:(3.8)}
\Delta_{i_k}(a_{i_k},b_{i_k})F(s_1,\ldots,b_{i_{k+1}},\ldots, s_n)\ge
\Delta_{i_k}(a_{i_k},b_{i_k})F(s_1,\ldots,a_{i_{k+1}},\ldots, s_n).
\end{equation}
By induction with respect to $k$, we establish (2).
\end{proof}

Every operator $\Delta_i(a,b)$ can be defined also for $a=-\infty$ and $b \in \mathbb R$ as $\Delta_i(a,b)G(s_1,\ldots,s_n)= G(s_1,\ldots,s_{i-1},b,s_{i+1},\ldots,s_n)$ if $G(s_1,\ldots,s_n)\in \Gamma^e_a(G,u)$ and (\ref{eq:(3.4)}) holds for $G$. If $A=C_1\times\cdots\times C_n$, where either $C_i= \langle a_i,b_i)$ for $-\infty <a_i\le b_i <\infty$, or $C_i =(-\infty,b_i)$ for $b_i\in \mathbb R$, we can define $V_F(A)= \Delta_1(a_1,b_1)\cdots\Delta_n(a_n,b_n)F(s_1,\ldots,s_n)$. Then $0\le V_F(A)\le u$. In particular, if each $C_i=(-\infty,b_i)$, then $V_F(A)=F(b_1,\ldots,b_n)$.

Every $n$-dimensional semi-closed block $A=\langle a_1,b_1)\times \cdots \times \langle a_n,b_n)$ has $2^n$ vertices $\alpha = (\alpha_1,\ldots,\alpha_n)$, where $\alpha_i \in \{a_i,b_i\}$ for each $i=1,\ldots, n$. For each vertex $\alpha=(\alpha_1,\ldots,\alpha_n)$, we set $|\alpha|$ as the number of $\alpha_i$'s coinciding with $a_i$ in $\alpha=(\alpha_1,\ldots,\alpha_n)$.
Then the volume condition can be expressed also in the form
\begin{equation}\label{eq:volum}
\Delta_{1}(a_1,b_1)\cdots \Delta_{n}(a_n,b_n)F(s_1,
\ldots,s_n)=\sum_\alpha (-1)^{|\alpha|}F(\alpha_1,\ldots,\alpha_n).
\end{equation}

Equation (\ref{eq:(3.7)}) has the following important interpretation. Let $A=\langle a_1,b_1)\times \cdots \times \langle a_n,b_n)$ be an $n$-dimensional semi-closed block. Fix mutually different integers $i_1,\ldots,i_k$ from $\{1,\ldots,n\}$ and choose $c_i\in \{a_i,b_i\}$ if $i \in \{1,\ldots,n\}\setminus \{i_1,\ldots,i_k\}$. For each $i \in \{1,\ldots,n\}$, we set $C_i=\langle a_i,b_i)$ if $i\in \{i_1,\ldots,i_k\}$ and $C_i=\{c_i\}$ otherwise. If we define a semi-closed block $A_k= C_1\times \cdots\times C_n$, then $\dim A_k=k$. Hence, if in (\ref{eq:(3.7)}) we put $s_i=c_{i}$ for $i \in \{1,\ldots,n\}\setminus \{i_1,\ldots,i_k\}$, then we have a kind of the volume condition for the subblock $A_k$ whose vertices coincide with some vertices of $A$; we can call it an $(n-k)$-th derived volume condition. If $k=n$, we have the original volume condition (\ref{eq:(3.5)}). On the other hand, if $k=0$, then we obtain only a vertex $\alpha =(c_1,\ldots,c_n)$, and for it we have also a special kind of the volume condition $F(c_1,\ldots,c_n)$ which lies of course between $0$ and $u$.

On the other hand, if for $i_1,\ldots,i_k$ and fixed $s_{j_1},\ldots,s_{j_{n-k}}$, where $\{j_1,\ldots,j_{n-k}\}=\{1,\ldots,n\}\setminus \{i_1,\ldots,i_k\}$, we define $F_{s_{j_1},\ldots,s_{j_{n-k}}}:\mathbb R^k\to \Gamma^e_a(G,u)$ by  $F_{s_{j_1},\ldots,s_{j_{n-k}}}(s_{i_1},\ldots,s_{i_k}):=F(s_1,\ldots,s_n)$, then every $F_{s_{j_1},\ldots,s_{j_{n-k}}}$ satisfies properties (\ref{eq:(3.3)})--(\ref{eq:(3.5)}) of a $k$-dimensional spectral resolution, and
$$
\Delta_{i_1}(a_{i_1},b_{i_1})\cdots \Delta_{i_k}(a_{i_k},b_{i_k})F_{s_{j_1},\ldots,s_{j_{n-k}}} (s_{i_1},\ldots,s_{i_k})
=\Delta_{i_1}(a_{i_1},b_{i_1})\cdots \Delta_{i_k}(a_{i_k},b_{i_k})F(s_1,\ldots,s_n)
$$
is a kind of the volume condition in $\mathbb R^k$.

\section{Basic Result - Lifting of $n$-Dimensional Spectral Resolutions}

The present section is one of the main parts of the paper. It gives a solution for lifting of $n$-dimensional spectral resolutions. These results will be applied in the next section to show how an $n$-dimensional spectral resolution entails the existence of the corresponding $n$-dimensional observable.

We begin to introduce some notations which allow us to handle with the process of lifting of $d$-cuboids and to control the volume conditions in a comfortable way.

Let $D\subseteq \{1,\ldots,n\}$, $d:=|D|$, and for each $i=1,\ldots,n$, let $a_i,b_i\in\mathbb{R}$ be such that $a_i<b_i$ whenever $i\in D$ and $a_i=b_i$ otherwise. Define $\mathcal{C}=\{(*_1,\ldots,*_n):*_i\in \{a_i,b_i\}\}$. We call $\mathcal{C}$ a $d$-\textit{cuboid}.
We will refer to the integer $d$ by $\dim \mathcal C := d$, to the set $D$ by $\mathrm{Dim}(\mathcal{C}):=D$ and to the $a_i$ ($b_i$, respectively) by $a_i^{\mathcal{C}}$ ($b_i^{\mathcal{C}}$, respectively). It is easy to see that any $d$-cuboid $\mathcal{C}$ has $2^d$ elements. Next, any $\mathcal{F}\subseteq\mathcal{C}$ which itself is an $e$-cuboid, $e\leq d$, is called an $e$-{\it face} of $\mathcal{C}$. If $e=0$, $\mathcal{F}$ is called a {\it vertex} of $\mathcal{C}$. Clearly vertices of $\mathcal{C}$ correspond to elements of $\mathcal{C}$ and by a slight abuse of notation we can identify them. It is also clear that the vertices of $\mathcal{C}$ can be partially ordered as they are elements of $\mathbb{R}^n$. We call $(b_1,\ldots,b_n)$ the top one or the first one, moreover, if a vertex $\alpha=(*_1,\ldots,*_n)\in\mathcal{C}$ has $a_i$'s for $m$ indices $i\in\mathrm{Dim}(\mathcal{C})$, we say $\alpha$ has an {\it order} $\mathrm{ord}_{\mathcal{C}}(\alpha):=m+1$ in $\mathcal{C}$ (i.e., the top vertex has an order $1$ and $(a_1,\ldots,a_n)$ has an order $\dim\mathcal{C}+1$). We say that some cuboid $\mathcal{D}$ is {\it inside} a cuboid $\mathcal{C}$, if for each $i\leq n$, $a^{\mathcal{C}}_i\leq a^{\mathcal{D}}_i\leq b^{\mathcal{D}}_i\leq b^{\mathcal{C}}_i$. In particular, every face of a cuboid $\mathcal{C}$ is inside $\mathcal{C}$.

As almost all essential steps in the process of the lifting will be achieved by an induction, the co-dimension of faces will be very important to us. For each $d$-cuboid $\mathcal{C}$ and $i\in\mathrm{Dim}(\mathcal{C})$, we define a $(d-1)$-cuboid $\partial_i\mathcal{C}:=\{(*_1,\ldots,*_n)\in\mathcal{C}: *_i=b_i\}$ and $\partial'_i\mathcal{C}:=\{(*_1,\ldots,*_n)\in\mathcal{C}: *_i=a_i\}$. Clearly $\partial_i\mathcal{C}$ and $\partial'_i\mathcal{C}$, $i=1,\ldots,n$, are all the $(d-1)$-faces of $\mathcal{C}$ and they are called {\it facets} of $\mathcal{C}$. Moreover, we say a facet is an {\it upper} ({\it lower}, respectively) {\it facet} of $\mathcal{C}$ if it arises by $\partial_i$ ($\partial'_i$, respectively) for some $i\in\mathrm{Dim}(\mathcal{C})$. We note that each facet is either an upper or a lower one. An easy but important observation is that whenever $i,j\in \mathrm{Dim}(\mathcal{C})$, $i\not= j$, we have
\begin{align}
\partial_i\circ\partial_j (\mathcal{C})=\partial_i(\mathcal{C})&\cap\partial_j(\mathcal{C})= \partial_j\circ\partial_i(\mathcal{C}), \label{eq:011}\\
\partial_i\circ\partial'_j (\mathcal{C})=\partial_i(\mathcal{C})&\cap\partial'_j(\mathcal{C})= \partial'_j\circ\partial_i(\mathcal{C}), \label{eq:012}
\\
\partial'_i\circ\partial'_j (\mathcal{C})=\partial'_i(\mathcal{C})&\cap\partial'_j(\mathcal{C})= \partial'_j\circ\partial'_i(\mathcal{C}). \label{eq:013}
\end{align}

Take a free Abelian group $\mathcal{A}_0$ generated by all cuboids in $\mathbb{R}^n$ and factorize it by the subgroup generated by elements
$$
\mathcal{C}-\partial_i(\mathcal{C})+\partial_i'(\mathcal{C}),\  i\in\Dim(\mathcal{C}).
$$
The resulting quotient Abelian group is denoted by $\mathcal{A}$. By an abuse of notation we will still refer to elements of $\mathcal{A}$ by cuboids. So in $\mathcal{A}$
$$
\mathcal{C}=\partial_i(\mathcal{C})-\partial'_i(\mathcal{C})
$$
holds for each cuboid $\mathcal{C}$ and $i\in\mathrm{Dim}(\mathcal{
C})$.

\begin{definition}\label{def:01}
Suppose we have cuboids $\mathcal{C}$, $\mathcal{C}_1$ and $\mathcal{C}_2$ of the same dimension. We say that a couple $\mathcal{C}_1$ and $\mathcal{C}_2$ is a \textit{splitting} of $\mathcal{C}$, if there is $i\in\Dim(\mathcal{C})$, such that  $\partial'_i(\mathcal{C}_1)=\partial'_i(\mathcal{C})$, $\partial_i(\mathcal{C}_1)=\mathcal{C}_1\cap\mathcal{C}_2= \partial'_i(\mathcal{C}_2)$ and $\partial_i(\mathcal{C}_2)=\partial_i(\mathcal{C})$. In other words, there is a real $c$, $a^{\mathcal{C}}_i< c< b^{\mathcal{C}}_i$, such that $\mathcal{C}_1$ shares with $\mathcal{C}$ all its coordinates unless $b^{\mathcal{C}_1}_i=c$ and  $\mathcal{C}_2$ shares with $\mathcal{C}$ all its coordinates unless $a^{\mathcal{C}_2}_i=c$.
\end{definition}

Observe that for the three cuboids from Definition~\ref{def:01}, we have in the group $\mathcal{A}$
\begin{equation}
\mathcal{C}=\partial_i(\mathcal{C}_2)-\partial'_i(\mathcal{C}_1) =(\partial_i(\mathcal{C}_2)-\partial'_i(\mathcal{C}_2))+ (\partial_i(\mathcal{C}_1)-\partial'_i(\mathcal{C}_1))= \mathcal{C}_1+\mathcal{C}_2.
\end{equation}

\begin{lemma}
Each cuboid $\mathcal{C}\in\mathcal{A}$ could be uniquely (up to order of summands) written in the form
\begin{equation}\label{eq:02}
\mathcal{C}=\sum_{\alpha\text{\ is\ a\ vertex\ in\ }\mathcal{C}} (-1)^{\mathrm{ord}_{\mathcal{C}}(\alpha)+1}\alpha.
\end{equation}
Hence, a vertex $\alpha$ occurs with $+1$ sign if it is of odd order in $\mathcal{C}$ and with $-1$ sign if it is of even order in $\mathcal{C}$. Consequently, $\mathcal{A}$ is in fact a free Abelian group generated by all the vertices  (elements of $\mathbb{R}^n$).
\end{lemma}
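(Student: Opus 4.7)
I will prove the existence and uniqueness statements separately, with the existence part by induction on $d:=\dim\mathcal{C}$ and the uniqueness (i.e., freeness) part by constructing an explicit inverse homomorphism.

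For existence, the base case $d=0$ is immediate: a $0$-cuboid is a single vertex $\alpha$ of order $1$, and the right-hand side of \eqref{eq:02} evaluates to $(-1)^{2}\alpha=\alpha$. For the inductive step, fix any $i\in \mathrm{Dim}(\mathcal{C})$. Using the defining relation in $\mathcal{A}$, namely $\mathcal{C}=\partial_i(\mathcal{C})-\partial'_i(\mathcal{C})$, both $\partial_i(\mathcal{C})$ and $\partial'_i(\mathcal{C})$ are $(d-1)$-cuboids whose vertex sets partition the vertex set of $\mathcal{C}$ according to whether the $i$-th coordinate is $b_i$ or $a_i$. By the inductive hypothesis each of them decomposes via \eqref{eq:02}, so I only need to check that the signs in the two expressions $\partial_i(\mathcal{C})$ and $-\partial'_i(\mathcal{C})$ agree with those prescribed by \eqref{eq:02} applied to $\mathcal{C}$. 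For a vertex $\alpha$ with $\alpha_i=b_i$, the number of coordinates equal to $a_j$ with $j\in\mathrm{Dim}(\partial_i(\mathcal{C}))=\mathrm{Dim}(\mathcal{C})\setminus\{i\}$ is the same as in $\mathcal{C}$, so $\mathrm{ord}_{\partial_i(\mathcal{C})}(\alpha)=\mathrm{ord}_\mathcal{C}(\alpha)$, and the sign matches. For a vertex $\alpha$ with $\alpha_i=a_i$, we have $\mathrm{ord}_\mathcal{C}(\alpha)=\mathrm{ord}_{\partial'_i(\mathcal{C})}(\alpha)+1$, hence the sign in $-\partial'_i(\mathcal{C})$ is $-(-1)^{\mathrm{ord}_{\partial'_i(\mathcal{C})}(\alpha)+1}=(-1)^{\mathrm{ord}_\mathcal{C}(\alpha)+1}$, again the right sign.

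For uniqueness (and freeness), let $\mathcal{V}$ denote the free Abelian group on the set $\mathbb{R}^n$ of vertices. I would define a group homomorphism $\phi:\mathcal{A}_0\to\mathcal{V}$ on generators by
\[
\phi(\mathcal{C}):=\sum_{\alpha\in\mathcal{C}}(-1)^{\mathrm{ord}_\mathcal{C}(\alpha)+1}\alpha.
\]
The parity bookkeeping just performed shows that $\phi(\mathcal{C})=\phi(\partial_i(\mathcal{C}))-\phi(\partial'_i(\mathcal{C}))$ for every cuboid $\mathcal{C}$ and every $i\in\mathrm{Dim}(\mathcal{C})$, so $\phi$ vanishes on the defining relations and descends to a homomorphism $\bar{\phi}:\mathcal{A}\to\mathcal{V}$. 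There is also the obvious map $\iota:\mathcal{V}\to\mathcal{A}$ sending a vertex $\alpha$ to the $0$-cuboid $\{\alpha\}$ viewed in $\mathcal{A}$. The existence part shows that $\iota\circ\bar{\phi}$ is the identity on $\mathcal{A}$, while $\bar{\phi}\circ\iota$ is visibly the identity on $\mathcal{V}$ (since for a $0$-cuboid $\{\alpha\}$, \eqref{eq:02} gives $\alpha$ itself). Hence $\mathcal{A}\cong\mathcal{V}$ is free on the vertices, and the decomposition \eqref{eq:02} is unique up to reordering of summands.

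The main obstacle I anticipate is the sign/parity bookkeeping: one must be careful to keep the notion of \emph{order} compatible under the operations $\partial_i$ and $\partial'_i$, since $\partial_i$ preserves the order of all vertices it retains while $\partial'_i$ decreases it by one (shifting the parity and thereby accounting for the minus sign coming from the relation $\mathcal{C}=\partial_i(\mathcal{C})-\partial'_i(\mathcal{C})$). Once this is cleanly organised, both the inductive existence argument and the construction of $\phi$ go through routinely, and the relations \eqref{eq:011}--\eqref{eq:013} guarantee that the choice of $i$ in the inductive step does not affect the outcome.
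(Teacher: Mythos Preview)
Your proof is correct and complete. The paper actually states this lemma without proof, evidently regarding it as routine; your induction on $\dim\mathcal{C}$ for the existence of the vertex decomposition, together with the explicit inverse homomorphism $\bar{\phi}:\mathcal{A}\to\mathcal{V}$ establishing freeness, supplies precisely the details the paper omits.
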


Let $L$ be a partial mapping from $\mathbb{R}^n$ to $\Gamma^e_a(G,u)$. Using~\eqref{eq:02}, we can extend $L$ to a group homomorphism $|\cdot|_L:\mathcal{A}_L\rightarrow G$, where $\mathcal{A}_L$ is the free subgroup of $\mathcal{A}$ generated by all vertices in $\Def(L)$. Hence $|\cdot|_L$ associates to each cuboid $\mathcal{C}$ having vertices in $\Def(L)$ its ``volume" element $|\mathcal{C}|_L \in G$.

In this section, we will suppose that we have fixed two Dedekind $\sigma$-complete unital po-groups $(G,u)$ and $(H,v)$ with interpolation and let $\pi:(G,u)\rightarrow (H,v)$ be a fixed homomorphism with the {\it lifting property} (LP), i.e. if, for each $L,U \subseteq \Gamma^e_a(G,u)$ finite and $h\in H$ such that $L\leq U$ and $\pi(L)\leq h\leq \pi(U)$, there is  $g\in G$ satisfying $\pi(g)=h$ and $L\leq g\leq U$. Note that (LP) implies surjectivity. In the same way we define (LP) for a homomorphism of effect algebras.

Let $F:\mathbb{R}^n\rightarrow \Gamma^e_a(H,v)$ be an $n$-dimensional spectral resolution and let $\pi:(G,u)\rightarrow (H,v)$ be a homomorphism of unital po-groups which satisfies the lifting property. We say, that a partial mapping $L:\mathbb{R}^n\rightarrow \Gamma(G,u)$ is a \textit{partial lift} of $F$, if $\pi\circ L=F$ and for each cuboid $\mathcal{C}\subseteq\Def(L)$ we have $|\mathcal{C}|_L\in\Gamma^e_a(G,u)$. That occurs if $0\leq L(\alpha)\leq u$ and $|\mathcal{C}|_L\geq 0$ for all vertices $\alpha$'s and all cuboids $\mathcal{C}$'s in the definition domain of $L$. We call the inequalities of the second type \textit{volume conditions}. We note that the volume condition of a cuboid does not imply volume conditions of its proper faces.

Now, let a partial lift $L$, a point $\alpha\in\mathbb R^n$, and $g \in \Gamma^e_a(G,u)$ be given. By $L'=L\cup \{(\alpha,g)\}$ we will understand that we would like to extend the definition domain of $L$ to the one of $L'=L\cup\{(\alpha,g)\}$ in such a way that $L'(\alpha)=g$.

\begin{lemma}\label{lem:01}
Let $\mathcal{C}$ be a $d$-cuboid, $\alpha$ be its vertex, $L$ be a partial lift defined on $\mathcal{C}\setminus \{\alpha\}$. For an extension $L'=L\cup \{(\alpha,g)\}$, $g\in\Gamma^e_a(G,u)$, we have: If $\alpha$ is of odd order in $\mathcal{C}$, then $\alpha-\mathcal{C}\in\mathcal{A}_L$ and the volume condition $|\mathcal{C}|_{L'}\geq 0$ holds iff $|\alpha-\mathcal{C}|_L\leq g$. If $\alpha$ is of even order in $\mathcal{C}$, then $\alpha+\mathcal{C}\in \mathcal{A}_L$ and the volume condition $|\mathcal{C}|_{L'}\geq 0$ holds iff $g\leq |\alpha+\mathcal{C}|_L$.
\end{lemma}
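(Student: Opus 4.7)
The plan is to read both assertions directly off the canonical expansion~\eqref{eq:02}, exploiting that $|\cdot|_{L'}$ is by construction a group homomorphism on $\mathcal{A}_{L'}$ which agrees with $|\cdot|_L$ on $\mathcal{A}_L$. Writing $N:=\mathrm{ord}_{\mathcal{C}}(\alpha)$ and isolating the $\alpha$-term in~\eqref{eq:02}, I would record the decomposition
$$
\mathcal{C}=(-1)^{N+1}\alpha + R,\qquad R:=\sum_{\beta\in\mathcal{C}\setminus\{\alpha\}}(-1)^{\mathrm{ord}_{\mathcal{C}}(\beta)+1}\beta,
$$
and note that every vertex occurring in $R$ lies in $\mathcal{C}\setminus\{\alpha\}\subseteq\Def(L)$, so that $R\in\mathcal{A}_L$ and $|R|_L\in G$ is well-defined.

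For the odd-order case, $(-1)^{N+1}=+1$ gives $\mathcal{C}-\alpha=R$ and hence $\alpha-\mathcal{C}=-R\in\mathcal{A}_L$. Evaluating the homomorphism $|\cdot|_{L'}$ on the identity $\mathcal{C}=\alpha+R$ then yields $|\mathcal{C}|_{L'}=g+|R|_L=g-|\alpha-\mathcal{C}|_L$, so the volume condition $|\mathcal{C}|_{L'}\ge 0$ becomes $|\alpha-\mathcal{C}|_L\le g$. For the even-order case the same computation with the sign flipped gives $\mathcal{C}+\alpha=R\in\mathcal{A}_L$ and $|\mathcal{C}|_{L'}=-g+|R|_L=|\alpha+\mathcal{C}|_L-g$, from which the stated inequality $g\le|\alpha+\mathcal{C}|_L$ follows.

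I do not expect any real obstacle in executing this plan; the argument is essentially a bookkeeping check that the sign of $\alpha$ in~\eqref{eq:02} dictates on which side of the comparison the new value $g$ must land. The only point meriting explicit attention is that $(-1)^{\mathrm{ord}_{\mathcal{C}}(\alpha)+1}=+1$ precisely when $\mathrm{ord}_{\mathcal{C}}(\alpha)$ is odd, which is immediate from the definition of $\mathrm{ord}_{\mathcal{C}}$ given just before Definition~\ref{def:01}, and that the lemma makes no assertion about the upper bound $|\mathcal{C}|_{L'}\le u$ so we need not worry about whether $g$ also lies below a competing upper threshold.
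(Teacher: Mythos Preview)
Your proof is correct and follows essentially the same approach as the paper: both isolate the $\alpha$-summand in the expansion~\eqref{eq:02}, observe that its sign is $+1$ or $-1$ according to the parity of $\mathrm{ord}_{\mathcal{C}}(\alpha)$, and then rearrange the identity $|\mathcal{C}|_{L'}=|\alpha|_{L'}\pm|\alpha\mp\mathcal{C}|_L$ to obtain the claimed inequality. Your explicit introduction of the remainder $R$ makes the membership $\alpha\mp\mathcal{C}\in\mathcal{A}_L$ slightly more transparent, but the underlying argument is identical.
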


\begin{proof}
If $\alpha$ is of odd (even, respectively) order, it occurs in~\eqref{eq:02} with $+1$ ($-1$, respectively) sign, hence $\alpha$ is canceled in $\alpha-\mathcal{C}$ ($\alpha+\mathcal{C}$, respectively). Consider the odd case: $|\mathcal{C}|_{L'}\geq 0\Leftrightarrow |\mathcal{C}-\alpha|_{L'}+|\alpha|_{L'}\geq 0 \Leftrightarrow |\alpha|_{L'} \geq -|\mathcal{C}-\alpha|_{L'}\Leftrightarrow  g\geq |\alpha-\mathcal{C}|_{L'}=|\alpha-\mathcal{C}|_{L}$. The even case:  $|\mathcal{C}|_{L'}\geq 0\Leftrightarrow |\mathcal{C}+\alpha|_{L'}\geq |\alpha|_{L'} \Leftrightarrow |\mathcal{C}+\alpha|_{L}\geq g$.
\end{proof}

\begin{lemma}\label{lem:02}
Let $\mathcal{C}$ be a $d$-cuboid, $\alpha$ its top vertex, and $L_1$ a partial lift defined on $\mathcal{C}\setminus \{\alpha\}$. Then
\begin{equation}\label{ineq03}
|\alpha-\mathcal{C}|_{L_1}\geq 0.
\end{equation}
If $d\geq 1$, let $\beta$ be some vertex which is
in $\mathcal{C}$ of the second order and $L_2$ be a partial lift defined on  $\mathcal{C}\setminus\{\beta\}$. Denote $i\in\Dim(\mathcal{C})$ such that $\beta$ is the top vertex in $\partial_i'(\mathcal{C})$. Then
\begin{equation}\label{ineq04}
u\geq|\mathcal{C}+\beta|_{L_2}\geq |\beta- \partial_i'(\mathcal{C})|_{L_2} \geq 0.
\end{equation}
\end{lemma}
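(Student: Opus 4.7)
My plan is to prove both statements by induction on $d=\dim\mathcal{C}$, in each case exploiting the identity $\mathcal{C}=\partial_i(\mathcal{C})-\partial_i'(\mathcal{C})$ which holds in the group $\mathcal{A}$ for every $i\in\Dim(\mathcal{C})$. A preliminary remark I would use freely is that the restriction of any partial lift to a subset of its domain is again a partial lift, since the conditions $0\le L(\alpha)\le u$ and the volume inequalities transfer to any subcuboid.

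For inequality~(\ref{ineq03}) the base case $d=0$ gives $\mathcal{C}=\{\alpha\}$, so $\alpha-\mathcal{C}=0$ in $\mathcal{A}$ and the claim is trivial. For the inductive step $d\ge 1$ I pick any $i\in\Dim(\mathcal{C})$ and rewrite
$$
\alpha-\mathcal{C}=\bigl(\alpha-\partial_i(\mathcal{C})\bigr)+\partial_i'(\mathcal{C}).
$$
The first summand is nonnegative by the inductive hypothesis applied to the $(d-1)$-cuboid $\partial_i(\mathcal{C})$, whose top vertex is still $\alpha$ and on which $L_1$ restricts to a partial lift defined on $\partial_i(\mathcal{C})\setminus\{\alpha\}$. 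The second is nonnegative because $\partial_i'(\mathcal{C})$ does not contain $\alpha$, so all its vertices lie in $\Def(L_1)$ and the volume condition of the partial lift applies directly.

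The rightmost inequality in~(\ref{ineq04}), namely $0\le|\beta-\partial_i'(\mathcal{C})|_{L_2}$, is just (\ref{ineq03}) applied to the $(d-1)$-cuboid $\partial_i'(\mathcal{C})$, whose top is $\beta$, using the restriction of $L_2$ to $\partial_i'(\mathcal{C})\setminus\{\beta\}$. For the middle inequality I use once more $\mathcal{C}=\partial_i(\mathcal{C})-\partial_i'(\mathcal{C})$ to split
$$
\mathcal{C}+\beta=\partial_i(\mathcal{C})+\bigl(\beta-\partial_i'(\mathcal{C})\bigr),
$$
and observe that $|\partial_i(\mathcal{C})|_{L_2}\ge 0$ because $\partial_i(\mathcal{C})\subseteq\Def(L_2)$ (it does not contain $\beta$), hence its volume condition is part of the data of the partial lift.

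The main obstacle is the upper bound $|\mathcal{C}+\beta|_{L_2}\le u$, which is not a direct consequence of any single volume condition on $\Def(L_2)$. I would prove it by a separate induction on $d\ge 1$. For $d=1$ one has $\mathcal{C}=\alpha-\beta$ in $\mathcal{A}$, so $\mathcal{C}+\beta=\alpha$ and the inequality reduces to $L_2(\alpha)\le u$, which holds since $\alpha\in\Def(L_2)$. For $d\ge 2$ I pick some $j\in\Dim(\mathcal{C})$ with $j\ne i$; because $\beta$ differs from $\alpha$ only in coordinate $i$, the vertex $\beta$ lies in $\partial_j(\mathcal{C})$ and is still of second order there (its top being $\alpha$). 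The inductive hypothesis then gives $|\partial_j(\mathcal{C})+\beta|_{L_2}\le u$, and combining this with the identity $\mathcal{C}+\beta=\bigl(\partial_j(\mathcal{C})+\beta\bigr)-\partial_j'(\mathcal{C})$ together with $|\partial_j'(\mathcal{C})|_{L_2}\ge 0$ yields the desired bound. The argument uses crucially the existence of a second index $j\ne i$, which is precisely why the base of this second induction must be $d=1$.
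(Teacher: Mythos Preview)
Your proof is correct and follows essentially the same approach as the paper's: the same induction on $d$ for \eqref{ineq03}, the same reduction of the rightmost inequality in \eqref{ineq04} to \eqref{ineq03}, the same splitting $\mathcal{C}+\beta=\partial_i(\mathcal{C})+(\beta-\partial_i'(\mathcal{C}))$ for the middle inequality, and the same separate induction using an index $j\ne i$ for the upper bound $|\mathcal{C}+\beta|_{L_2}\le u$. The only differences are in presentation.
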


\begin{proof} We first prove~\eqref{ineq03} by an induction on $d$. The case $d=0$ is trivial. Suppose $d\geq 1$. We have $|\alpha-\mathcal{C}|_{L_1}=|\alpha-\partial_i(\mathcal{C})+ \partial'_i(\mathcal{C})|_{L_1}=|\alpha-\partial_i(\mathcal{C})|_{L_1}+ |\partial'_i(\mathcal{C})|_{L_1}$. As $\partial'_i(\mathcal{C})$ misses $\alpha$, $|\partial'_i(\mathcal{C})|_{L_1}\geq 0$ and by the induction hypothesis $|\alpha-\partial_i(\mathcal{C})|_{L_1}\geq 0$ as well.

Let us prove inequalities~\eqref{ineq04}:
We prove the first one by an induction on $d$. The case $d=1$ is trivial. Let $j\not= i$, i.e., $\beta\notin \partial'_j(\mathcal{C})$ and $\beta$ is in $\partial_j(\mathcal{C})$ of order $2$. Then $|\mathcal{C}+\beta|_{L_2}=|\partial_j(\mathcal{C})-\partial'_j(\mathcal{C})+ \beta|_{L_2}=|\partial_j(\mathcal{C})+\beta|_{L_2}- |\partial'_j(\mathcal{C})|_{L_2}\leq |\partial_j(\mathcal{C})+\beta|_{L_2}$, and the last one is $\leq u$ by the induction hypothesis. The next inequality follows by: $|\beta+\mathcal{C}|_{L_2}\geq |\beta- \partial_i'(\mathcal{C})|_{L_2} \Leftrightarrow |\beta+\mathcal{C}|_{L_2}-|\beta- \partial_i'(\mathcal{C})|_{L_2}\geq 0\Leftrightarrow |\mathcal{C}|_{L_2}+|\partial_i'(\mathcal{C})|_{L_2}\geq 0$. But  $|\mathcal{C}+\partial_i'(\mathcal{C})|_{L_2}=|\partial_i(\mathcal{C})|_{L_2}\geq 0$ as $L_2$ is defined on the cuboid $\partial_i(\mathcal{C})$.

Finally, $|\beta-\partial'_i(\mathcal{C})|_L\geq 0$ follows from the already proved inequality~\eqref{ineq03} ($\beta$ is the top vertex in $\partial'_i(\mathcal{C})$).
\end{proof}

\begin{lemma}\label{lem:ortho}
Let $\mathcal{C}$ be a cuboid and $\mathcal F(\mathcal C)$ be a collection of its facets such that, for each $i\in \Dim(\mathcal{C})$, the facet $\partial_i(\mathcal{C})$ or  $\partial'_i(\mathcal{C})$ does not belong to $\mathcal F(\mathcal C)$. Then, for each sub-cuboid $\mathcal{D}\subseteq\mathcal{C}$ which satisfies $\mathcal{D}\subseteq\bigcup \mathcal F(\mathcal C)$, there is some $\mathcal{F}\in \mathcal F(\mathcal C)$ such that $\mathcal{D}\subseteq\mathcal{F}$. Consequently, if $L$ is a mapping $L:\bigcup\mathcal{F}\rightarrow G$ whose restriction to any $\mathcal{F}\in \mathcal F(\mathcal C)$ is a partial lift, then $L$ itself is a partial lift.
\end{lemma}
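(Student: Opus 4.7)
The plan is to reduce the lemma to a purely combinatorial statement parametrizing facets and sub-cuboids by index sets, and then, in the contrapositive direction, construct an explicit vertex of $\mathcal D$ that misses every facet of $\mathcal F(\mathcal C)$.

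\textbf{Setup.} Let $S^{+}:=\{i\in\Dim(\mathcal C):\partial_i(\mathcal C)\in\mathcal F(\mathcal C)\}$ and $S^{-}:=\{i\in\Dim(\mathcal C):\partial'_i(\mathcal C)\in\mathcal F(\mathcal C)\}$, so the hypothesis becomes $S^{+}\cap S^{-}=\emptyset$. A sub-cuboid $\mathcal D\subseteq\mathcal C$ is automatically a face of $\mathcal C$: the set-theoretic inclusion forces $a_i^{\mathcal D}=a_i^{\mathcal C}$ and $b_i^{\mathcal D}=b_i^{\mathcal C}$ for $i\in\Dim(\mathcal D)\subseteq\Dim(\mathcal C)$, and a fixed $c_i\in\{a_i^{\mathcal C},b_i^{\mathcal C}\}$ for $i\in\Dim(\mathcal C)\setminus\Dim(\mathcal D)$. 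Let $T^{+}$ (resp.\ $T^{-}$) collect those $i\in\Dim(\mathcal C)\setminus\Dim(\mathcal D)$ with $c_i=b_i^{\mathcal C}$ (resp.\ $c_i=a_i^{\mathcal C}$). A direct verification gives $\mathcal D\subseteq\partial_i(\mathcal C)$ iff $i\in T^{+}$ and $\mathcal D\subseteq\partial'_i(\mathcal C)$ iff $i\in T^{-}$; the first claim therefore reduces to showing that $\mathcal D\subseteq\bigcup\mathcal F(\mathcal C)$ implies $(T^{+}\cap S^{+})\cup(T^{-}\cap S^{-})\neq\emptyset$.

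\textbf{Main argument.} I argue by contraposition. Assuming $(T^{+}\cap S^{+})\cup(T^{-}\cap S^{-})=\emptyset$, I build a vertex $\alpha=(*_1,\ldots,*_n)$ of $\mathcal D$ lying outside every facet in $\mathcal F(\mathcal C)$. The coordinates $*_i$ for $i\notin\Dim(\mathcal D)$ are pinned by $\mathcal D$; for $j\in\Dim(\mathcal D)\cap S^{+}$ set $*_j:=a_j^{\mathcal C}$; for $j\in\Dim(\mathcal D)\cap S^{-}$ set $*_j:=b_j^{\mathcal C}$ (these choices are compatible because $S^{+}\cap S^{-}=\emptyset$); the remaining coordinates in $\Dim(\mathcal D)$ are arbitrary. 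To check $\alpha\notin\partial_j(\mathcal C)$ for every $j\in S^{+}$: such a $j$ falls in $T^{+}$ (excluded by hypothesis), in $T^{-}$ (so $*_j=a_j^{\mathcal C}$), or in $\Dim(\mathcal D)$ (so $*_j=a_j^{\mathcal C}$ by construction), and in all cases $*_j\neq b_j^{\mathcal C}$. The case $j\in S^{-}$ is symmetric, giving $\alpha\notin\bigcup\mathcal F(\mathcal C)$ and contradicting $\mathcal D\subseteq\bigcup\mathcal F(\mathcal C)$.

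\textbf{Consequence and main obstacle.} For the final assertion, the two defining properties of a partial lift hold as follows: the identity $\pi\circ L=F$ is inherited pointwise from the restrictions $L|_{\mathcal F}$, while for each cuboid $\mathcal D\subseteq\Def(L)$ the first part supplies $\mathcal F\in\mathcal F(\mathcal C)$ with $\mathcal D\subseteq\mathcal F$, so $|\mathcal D|_L=|\mathcal D|_{L|_{\mathcal F}}\in[0,u]$ by the partial-lift hypothesis on $L|_{\mathcal F}$. The main obstacle is conceptual rather than technical: correctly setting up the index sets $S^{\pm}$ and $T^{\pm}$, recognizing that the inclusion $\mathcal D\subseteq\mathcal C$ of cuboids forces $\mathcal D$ to be a face (so faces, not interior sub-blocks, are what must be treated), and cleanly executing the three-way case split on $j\in S^{+}\cup S^{-}$ without letting the indices $i\notin\Dim(\mathcal C)$ (which contribute no facets) muddy the argument.
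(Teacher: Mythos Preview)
Your proof is correct and follows essentially the same approach as the paper's: both argue by contradiction/contraposition and construct a vertex $\alpha$ of $\mathcal D$ that misses every facet in $\mathcal F(\mathcal C)$, using the hypothesis $S^+\cap S^-=\emptyset$ (equivalently, the paper's observation that $i_{\mathcal F_1}\neq i_{\mathcal F_2}$ for distinct $\mathcal F_1,\mathcal F_2$) to ensure the coordinate choices are compatible. Your index-set formalism with $S^\pm,T^\pm$ makes the case analysis more explicit than the paper's terser version, but the underlying idea is identical.
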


\begin{proof}
Suppose a cuboid $\mathcal{D}\subseteq \bigcup \mathcal F(\mathcal C)$ which is not a sub-cuboid of any $\mathcal{F}\in \mathcal F(\mathcal C)$. Take any $\mathcal{F}\in \mathcal F(\mathcal C)$ and denote $i_{\mathcal{F}}$ the unique integer such that $\Dim(\mathcal{F})\cup\{i_\mathcal{F}\}=\Dim(\mathcal{C})$. We
have either $i_\mathcal{F}\in\Dim(\mathcal{D})$ or $i_\mathcal{F}\notin\Dim(\mathcal{D})$ and $a^\mathcal{D}_{i_\mathcal{F}}=b^\mathcal{D}_{i_\mathcal{F}}\not= a^\mathcal{F}_{i_\mathcal{F}}=b^\mathcal{F}_{i_\mathcal{F}}$. Since otherwise $\mathcal{D}\subseteq\mathcal{F}$. Consequently, in $\mathcal{D}$ there is a vertex $\alpha$ such that $a^\alpha_{i_{\mathcal{F}}}\not= a^\mathcal{F}_{i_
\mathcal{F}}$ for each $\mathcal{F}\in \mathcal F(\mathcal C)$. Note that $i_{\mathcal{F}_1}\not= i_{\mathcal{F}_2}$, whenever $\mathcal{F}_1\not=\mathcal{F}_2$. So $\alpha$ is not a vertex of any of $\mathcal{F}\in \mathcal F(\mathcal C)$, which is a contradiction.
\end{proof}

\begin{lemma}\label{lem:ortho2}
Let $\mathcal{C}_1,\mathcal{C}_2$ be a splitting of a cuboid $\mathcal{C}$, $i,c$ be as in Definition~\ref{def:01} and a cuboid $\mathcal{D}\subseteq \mathcal{C}_1\cup\mathcal{C}_2$, but $\mathcal{D}\nsubseteq\mathcal{C}$. Then $\mathcal{D}\subseteq\mathcal{C}_1$ or $\mathcal{D}\subseteq\mathcal{C}_2$. Consequently, if $L:\mathcal{C}_1\cup\mathcal{C}_2\rightarrow \Gamma^e_a(G,u)$ and the restrictions of $L$ to $\mathcal{C}_1$ and to $\mathcal{C}_2$ are both partial lifts, then $L$ is a partial lift as well.
\end{lemma}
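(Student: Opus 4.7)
The plan is to first establish the set-theoretic containment claim by a clean case analysis on the $i$-th coordinate of $\mathcal{D}$, and then reduce the partial-lift claim to volume conditions that either lie inside a single $\mathcal{C}_k$ or split additively via the group relation from Definition~\ref{def:01}. Since every vertex of a cuboid $\mathcal{D}\subseteq\mathcal{C}_1\cup\mathcal{C}_2$ has $i$-th coordinate in $\{a^{\mathcal{C}}_i,c,b^{\mathcal{C}}_i\}$ and, for $j\neq i$, $j$-th coordinate in $\{a^{\mathcal{C}}_j,b^{\mathcal{C}}_j\}$, I would split on whether $i\in\Dim(\mathcal{D})$. If $i\notin\Dim(\mathcal{D})$, the common value $a^{\mathcal{D}}_i=b^{\mathcal{D}}_i$ is either $a^{\mathcal{C}}_i$ or $b^{\mathcal{C}}_i$, forcing $\mathcal{D}\subseteq\mathcal{C}$ and contradicting the hypothesis, or else it equals $c$, in which case every vertex of $\mathcal{D}$ sits in $\partial_i(\mathcal{C}_1)=\partial'_i(\mathcal{C}_2)\subseteq\mathcal{C}_1$. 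If $i\in\Dim(\mathcal{D})$, the two-element set $\{a^{\mathcal{D}}_i,b^{\mathcal{D}}_i\}$ is one of $\{a^{\mathcal{C}}_i,c\}$, $\{c,b^{\mathcal{C}}_i\}$, or $\{a^{\mathcal{C}}_i,b^{\mathcal{C}}_i\}$, giving respectively $\mathcal{D}\subseteq\mathcal{C}_1$, $\mathcal{D}\subseteq\mathcal{C}_2$, and the excluded $\mathcal{D}\subseteq\mathcal{C}$.

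For the partial-lift consequence, the conditions $\pi\circ L=F$ and $0\leq L(\alpha)\leq u$ at vertices transfer directly from the two restrictions, so only the volume condition $|\mathcal{D}|_L\geq 0$ for every cuboid $\mathcal{D}\subseteq\mathcal{C}_1\cup\mathcal{C}_2$ requires work. If $\mathcal{D}\nsubseteq\mathcal{C}$, the first part places $\mathcal{D}$ inside some $\mathcal{C}_k$ and the volume condition is supplied by the partial lift $L|_{\mathcal{C}_k}$. When $\mathcal{D}\subseteq\mathcal{C}$, the same case analysis shows that either $\mathcal{D}\subseteq\mathcal{C}_k$ for some $k$, which is already handled, or $i\in\Dim(\mathcal{D})$ with $a^{\mathcal{D}}_i=a^{\mathcal{C}}_i$ and $b^{\mathcal{D}}_i=b^{\mathcal{C}}_i$. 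In this straddling case I would split $\mathcal{D}$ along coordinate $i$ at the value $c$ into $\mathcal{D}_1\subseteq\mathcal{C}_1$ and $\mathcal{D}_2\subseteq\mathcal{C}_2$; the new vertices produced by the split have $i$-coordinate $c$ and therefore lie in $\partial_i(\mathcal{C}_1)=\partial'_i(\mathcal{C}_2)\subseteq\Def(L)$, so the identity $\mathcal{D}=\mathcal{D}_1+\mathcal{D}_2$ valid in $\mathcal{A}$ passes through the homomorphism $|\cdot|_L$ and delivers $|\mathcal{D}|_L=|\mathcal{D}_1|_L+|\mathcal{D}_2|_L\geq 0$.

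The main obstacle will be this straddling case. One has to verify carefully that $\mathcal{D}_1$ and $\mathcal{D}_2$ are genuine sub-cuboids of $\mathcal{C}_1$ and $\mathcal{C}_2$ (by checking both the $i$-range and every other coordinate), and that every vertex introduced by the splitting belongs to $\Def(L)$, so that the additive relation $\mathcal{D}=\mathcal{D}_1+\mathcal{D}_2$ holding in the free Abelian group $\mathcal{A}$ can legitimately be pushed through $|\cdot|_L$ to $G$. The complementary bound $|\mathcal{D}|_L\leq u$ is not a separate concern, since the remark accompanying the definition of partial lift records it as an automatic consequence of non-negativity of all sub-cuboid volumes together with $0\leq L(\alpha)\leq u$ at vertices.
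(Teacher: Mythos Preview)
Your proof is correct and follows the same approach as the paper. For the first part, your case analysis on the $i$-th coordinate of $\mathcal{D}$ is exactly the paper's argument, just spelled out more carefully; the paper compresses it to the single observation that the $i$-th coordinates of the vertices of $\mathcal{D}$ lie in $\{a^{\mathcal{C}}_i,c\}$ or in $\{c,b^{\mathcal{C}}_i\}$, which forces $\mathcal{D}\subseteq\mathcal{C}_1$ or $\mathcal{D}\subseteq\mathcal{C}_2$.

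For the ``Consequently'' part, you actually supply more than the paper does. The paper's proof says nothing further, leaving the partial-lift claim to the reader. Your treatment of the straddling case --- splitting $\mathcal{D}\subseteq\mathcal{C}$ with $i\in\Dim(\mathcal{D})$ at level $c$ into $\mathcal{D}_1\subseteq\mathcal{C}_1$ and $\mathcal{D}_2\subseteq\mathcal{C}_2$ and using $\mathcal{D}=\mathcal{D}_1+\mathcal{D}_2$ in $\mathcal{A}$ --- is correct and necessary (e.g.\ $\mathcal{D}=\mathcal{C}$ itself is such a case). This is precisely the argument the paper invokes later, in the proof of Lemma~\ref{lem:refinement}, when it writes that ``each cuboid $\mathcal{D}\subseteq X$ could be in the obvious way split into $\mathcal{D}_1$ and $\mathcal{D}_2$''; you have simply made explicit here what the paper defers.
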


\begin{proof}
The $i$-th coordinates of the vertices of $\mathcal{D}$ belong to $\{a^{\mathcal{C}_i},c\}$ or $\{c,b^{\mathcal{C}_i}\}$, the first case implies $\mathcal{D}\subseteq \mathcal{C}_1$, the other one implies $\mathcal{D}\subseteq \mathcal{C}_2$.
\end{proof}

\begin{lemma}\label{lem:liftCuboid}
Let $\mathcal{C}$ be a cuboid in $\mathbb{R}^n$ and $L$ be a partial lift in $\mathcal{C}$ such that the definition domain of $L$ equals one of the following
\begin{enumerate}[{\rm (i)}]
\item\label{c1} $\emptyset$,
\item\label{c2} one lower facet, that is $\partial'_i(\mathcal{C})$ for some $i\in\mathrm{Dim}(\mathcal{C})$,
\item\label{c3} a union of one upper facet $\partial_i(\mathcal{C})$, $i\in\mathrm{Dim}(\mathcal{C})$, and a collection of lower facets $\partial'_j(\mathcal{C})$, $j\in J$, for some $J\subseteq \mathrm{Dim}(\mathcal{C})\setminus \{i\}$.
\end{enumerate}
Then we can extend $L$ on the whole cuboid $\mathcal{C}$.
\end{lemma}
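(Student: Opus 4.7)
The plan is a main induction on $d = \dim\mathcal{C}$ handling cases (i), (ii), (iii) in that order, together with an auxiliary descending induction on $|J|$ inside case (iii). The base $d = 0$ is immediate: only (i) applies, and (LP) with $0 \leq u$ and $h = F(\alpha)$ produces $L(\alpha)$. For $d \geq 1$, case (i) reduces to case (ii) by picking $i \in \Dim(\mathcal{C})$ and invoking the inductive hypothesis (case (i), dimension $d-1$) to lift $L$ on $\partial'_i(\mathcal{C})$. For $d \geq 2$, case (ii) reduces to case (iii) by picking $i' \neq i$ and applying the inductive hypothesis (case (ii), dimension $d-1$) to $\partial_{i'}(\mathcal{C})$, on which $L$ is already defined precisely on its lower facet $\partial'_i(\mathcal{C}) \cap \partial_{i'}(\mathcal{C})$ in direction $i$. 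Lemma~\ref{lem:ortho} then guarantees that the combined mapping on $\partial'_i(\mathcal{C}) \cup \partial_{i'}(\mathcal{C})$ is a partial lift (for each $k \in \Dim(\mathcal{C})$, at most one of $\partial_k(\mathcal{C}), \partial'_k(\mathcal{C})$ appears), and we enter case (iii) with $J = \{i\}$. For $d = 1$, the remaining sub-cases of (ii) and (iii) follow directly from (LP).

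Inside case (iii) with $|J| = k < d - 1$, I pick $j' \in \Dim(\mathcal{C}) \setminus (J \cup \{i\})$ and extend $L$ to $\partial'_{j'}(\mathcal{C})$: on that $(d-1)$-cuboid $L$ already sits in the case-(iii) pattern with the same $i$ and $J$, so the main inductive hypothesis at dimension $d - 1$ applies, and Lemma~\ref{lem:ortho} again keeps the result a partial lift; the auxiliary hypothesis at $|J'| = k + 1$ then finishes. The crux is the base $|J| = d - 1$: a single vertex $\alpha$ -- the top vertex of $\partial'_i(\mathcal{C})$, of order $2$ in $\mathcal{C}$ -- remains undefined. Lemma~\ref{lem:02} applied with $\beta = \alpha$ yields
\[
0 \leq |\alpha - \partial'_i(\mathcal{C})|_L \leq |\mathcal{C} + \alpha|_L \leq u,
\]
and applying $\pi$ together with the $F$-versions of the volume conditions (Proposition~\ref{pr:prop}(2)) sandwiches $F(\alpha)$ between the images. (LP) then supplies $g \in [|\alpha - \partial'_i(\mathcal{C})|_L, |\mathcal{C} + \alpha|_L]$ with $\pi(g) = F(\alpha)$, and we set $L(\alpha) := g$.

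The principal obstacle is to verify that this choice of $g$ meets the volume conditions of \emph{every} sub-cuboid $\mathcal{D}$ of $\mathcal{C}$ containing $\alpha$. By Lemma~\ref{lem:01}, each such condition reads $g \geq |\alpha - \mathcal{D}|_L$ when $i \notin \Dim(\mathcal{D})$ (so $\mathcal{D} \subseteq \partial'_i(\mathcal{C})$) and $g \leq |\mathcal{D} + \alpha|_L$ when $i \in \Dim(\mathcal{D})$. Hence I must show that $|\alpha - \partial'_i(\mathcal{C})|_L$ dominates every lower bound and $|\mathcal{C} + \alpha|_L$ is dominated by every upper bound. I plan to establish this by expanding the differences $|\alpha - \partial'_i(\mathcal{C})|_L - |\alpha - \mathcal{D}|_L$ and $|\mathcal{D} + \alpha|_L - |\mathcal{C} + \alpha|_L$ as sums of volume terms $|\mathcal{E}|_L$ for faces $\mathcal{E}$ of $\mathcal{C}$ avoiding $\alpha$ (hence sitting inside $\Def(L)$), each non-negative because $L$ is already a partial lift.
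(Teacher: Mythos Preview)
Your proposal is correct and follows essentially the same inductive architecture as the paper's proof: reduce (i) and (ii) to (iii), grow $J$ one lower facet at a time via the $(d-1)$-dimensional hypothesis and Lemma~\ref{lem:ortho}, and at $|J|=d-1$ use Lemma~\ref{lem:02} together with (LP) to lift the last vertex $\alpha$ of order $2$. The only cosmetic differences are that you route case (i) through (ii) (lifting a lower facet first) whereas the paper goes straight to (iii) by lifting an upper facet, and that you verify the remaining volume conditions by showing $|\alpha-\partial'_i(\mathcal{C})|_L$ and $|\mathcal{C}+\alpha|_L$ are the extremal bounds, while the paper phrases the same computation as an induction on co-dimension of the face $\mathcal{D}$; in both cases the underlying identity is $|\partial_j(\mathcal{E})|_L=|\mathcal{E}|_L+|\partial'_j(\mathcal{E})|_L$ with $\partial'_j(\mathcal{E})$ avoiding $\alpha$.
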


Note that if $\mathcal{C}$ is a rectangle, the case when all vertices up to the top one are lifted is excluded.

\begin{proof}
We will use an induction on $\dim\mathcal{C}$. The case $\dim \mathcal{C}$ equals $0$ or $1$ is trivial. Suppose $\dim \mathcal{C}\geq 2$ and the case~\eqref{c1}. Take any upper facet $\mathcal{F}$ of $\mathcal{C}$ and use the case~\eqref{c1} of the induction hypothesis to define $L$ on $\mathcal{F}$. We have arrived at the case~\eqref{c3}.

If the case~\eqref{c2} holds, pick any $j\in\mathrm{Dim}(\mathcal{C})\setminus
\{i\}$. If we extend $L$ on $\mathcal{F}=\partial_j(\mathcal{C})$, we will arrive at the case~\eqref{c3} again. Since $\partial'_i(\mathcal{C})\cap\mathcal{F}=\partial'_i(\mathcal{F})$ is a lower facet of $\mathcal{F}$, we can use the case~\eqref{c2} of the induction hypothesis to $\mathcal{F}$. So we obtain a partial lift $L'$ on $\mathcal{F}$ and by Lemma~\ref{lem:ortho}, $L\cup L'$ is a partial lift.

Hence, it remains to prove the case~\eqref{c3}. Take any lower facet $\mathcal{F}=\partial_k'(\mathcal{C})$. If $j\notin J$, we extend $L$ on $\mathcal{F}$: Since the upper facet $\partial_i(\mathcal{C})$ of $\mathcal{C}$ intersects $\mathcal{F}$ in an upper facet of $\mathcal{F}$ and similarly each lower facet $\partial'_j(\mathcal{C})$, $j\in J$, intersects $\mathcal{F}$ in a lower facet of $\mathcal{F}$, we can by the case~\eqref{c3} of the induction hypothesis and Lemma~\ref{lem:ortho} extend $L$ on $\mathcal{F}$.

Hence, we can assume $J=\mathrm{Dim}(\mathcal{C})\setminus\{i\}$. That is, the only vertex that remains to lift is $\beta:=(b_1,\ldots,b_{i-1},a_i,b_{i+1},\ldots,b_d)$. Note that $\beta$ is of order $2$ in $\mathcal{C}$. We have to find an extension $L'=L\cup\{(\beta,g)\}$, $g\in \Gamma^e_a(G,u)$, such that
\begin{equation}\label{ineq01}
|\mathcal{C}|_{L'}\geq 0,
\end{equation}
\begin{equation}\label{ineq02}
|\partial'_i(\mathcal{C})|_{L'}\geq 0.
\end{equation}
We claim, that if these two volume conditions hold, then all the volume conditions hold in $\mathcal{C}$. At the first, we note that $|\partial_j(\mathcal{C})|_{L'}\geq 0$ holds for each $j\in\mathrm{Dim}(\mathcal{C})\setminus\{i\}$ (the case $j=i$ holds by assumptions): $|\partial_j(\mathcal{C})|_{L'}=|\mathcal{C}|_{L'}+ |\partial_j'(\mathcal{C})|_{L'}\geq 0$, since $|\partial_j'(\mathcal{C})|_{L'}=|\partial_j'(\mathcal{C})|_{L}\geq 0$ and~\eqref{ineq01}. Hence, the volume condition holds in each facet of $\mathcal{C}$. Next let $\mathcal{F}$ be a face of $\mathcal{C}$ which contains $\beta$ and is in $\mathcal{C}$ of co-dimension $e\geq 2$. It follows $\mathcal{F}=\Theta_{i_1}\circ\cdots\circ \Theta_{i_e}(\mathcal{C})$, where each $\Theta_{i_k}\in\{\partial_{i_k},\partial'_{i_k}\}$. Since $\mathcal{F}$ contains $\beta$ which is a vertex of order $2$ in $\mathcal{C}$, the number of $k$, so that $\Theta_{i_k}=\partial'_{i_k}$ is at most $1$. In particular, there is $k\leq e$ such that $\Theta_{i_k}=\partial_{i_k}$ and so $\mathcal{F}$ is of the form $\partial_{i_k}(\mathcal{F}_0)$ (by commutativity~\eqref{eq:012}). Now we use the formula
$$
|\mathcal{F}|_{L'}=|\partial_{i_k}(\mathcal{F}_0)|_{L'}= |\mathcal{F}_0|_{L'}+|\partial'_{i_k}(\mathcal{F}_0)|_{L'}.
$$
Since $|\partial'_{i_k}(\mathcal{F}_0)|_{L'}=|\partial'_{i_k}(\mathcal{F}_0)|_L\geq 0$ (as it misses $\beta$) it is enough to prove the volume condition for the face with less co-dimension in $\mathcal{C}$. In other words, we can use an induction on co-dimension to finish the proof of the claim.

As $\beta$ occurs in $\mathcal{C}$ as an element of order $2$ and in $\partial'_i(\mathcal{C})$ as an element of order $1$, by Lemma~\ref{lem:01} we can replace inequalities~\eqref{ineq01},~\eqref{ineq02} by equivalent conditions
$$|\mathcal C+\beta|_L\geq g\ \ \&\ \ g\geq |\beta-\partial'_i(\mathcal{C})|_L.$$
We finish by applying (LP) with bounds $\{u,|\mathcal C+\beta|_L\}\geq \{|\beta-\partial'_i(\mathcal{C})|_L,0\}$ which are consistent by Lemma~\ref{lem:02}.
\end{proof}

In the following lemma $\mathcal{S}$ figures as a section of $\mathcal{C}$ orthogonal to the axis $i$.

\begin{lemma}\label{lem:section}
Let $\mathcal{C}_1$ and $\mathcal{C}_2$ be a splitting of a $d$-cuboid $\mathcal{C}$, $i,c$ be as in Definition~\ref{def:01} and denote $\mathcal{S}$ the $(d-1)$-cuboid $\partial_i(\mathcal{C}_1)=\mathcal{C}_1\cap\mathcal{C}_2= \partial'_{i}(\mathcal{C}_2)$. Next let $L$ be a partial lift defined on all the vertices of $\mathcal{C}$ and $\bigcup \mathcal F(\mathcal S)$, where $\mathcal F(\mathcal S)$ is the collection (possibly empty) of facets of $\mathcal S$ such that for each $j\in\Dim(\mathcal{S})$, $\partial_j(\mathcal{S})$ or $\partial'_j(\mathcal{S})$ does not belong to $\mathcal F(\mathcal S)$ and in $\mathcal F(\mathcal S)$ is at most one upper facet of $\mathcal{S}$.
Then there is a partial lift which extends $L$ on the whole $\mathcal{S}$.
\end{lemma}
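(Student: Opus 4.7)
The goal is to fill in $L$ on every vertex of $\mathcal S$ not yet in its domain so that the extension is still a partial lift; once this is done, $\Def(L)$ contains every vertex of $\mathcal C_1\cup\mathcal C_2$, and since by Lemma~\ref{lem:ortho2} any cuboid inside $\mathcal C_1\cup\mathcal C_2$ lies wholly in $\mathcal C_1$ or wholly in $\mathcal C_2$, it suffices to check separately that $L|_{\mathcal C_1}$ and $L|_{\mathcal C_2}$ are partial lifts. I would proceed by induction on $d=\dim\mathcal C$. In the base case $d=1$ the section $\mathcal S$ is a single vertex $\alpha$ whose two neighbours $\alpha^-\in\partial'_i(\mathcal C)$ and $\alpha^+\in\partial_i(\mathcal C)$ along the $i$-axis sit in $\mathcal C$ and are already lifted with $L(\alpha^-)\leq L(\alpha^+)$; (LP) then furnishes $L(\alpha)\in\Gamma^e_a(G,u)$ between them projecting to $F(\alpha)$.

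For the inductive step I would distinguish cases according to the shape of $\mathcal F(\mathcal S)$. The hypotheses on $\mathcal F(\mathcal S)$ together with Lemma~\ref{lem:ortho} imply that $L|_{\bigcup\mathcal F(\mathcal S)}$ is already a partial lift on $\mathcal S$. When $\mathcal F(\mathcal S)$ is empty, is one lower facet, or consists of one upper facet together with lower facets from other directions, the extension to all of $\mathcal S$ is produced by Lemma~\ref{lem:liftCuboid} (cases~(i), (ii), (iii), respectively) applied inside $\mathcal S$. The only remaining situation is that $\mathcal F(\mathcal S)$ contains several lower facets but no upper facet; here I would first pick a direction $j$ in which no facet of $\mathcal S$ has been used and extend $L$ to all of $\partial_j(\mathcal S)$ by a recursive call of the present lemma, with $\mathcal C$ replaced by the $(d-1)$-cuboid $\partial_j(\mathcal C)$ whose own cross-section against direction $i$ is exactly $\partial_j(\mathcal S)$. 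That reduces us to case~(iii) of Lemma~\ref{lem:liftCuboid}, which finishes the extension inside $\mathcal S$.

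The main obstacle is the volume conditions of cuboids that bridge $\mathcal S$ and the rest of $\mathcal C_1$ (or $\mathcal C_2$): Lemma~\ref{lem:liftCuboid} only certifies the volume conditions of cuboids lying inside $\mathcal S$, whereas we also need them for cuboids that mix vertices from $\partial'_i(\mathcal C)\cup\partial_i(\mathcal C)$ with vertices on the section level $c$. Lemma~\ref{lem:01} recasts each such cross-section volume condition as either a lower or an upper bound on $L(\alpha)$ in $G$ when a new vertex $\alpha\in\mathcal S$ is placed. I would then use Lemma~\ref{lem:02} together with the pre-existing volume conditions on $\mathcal C$ and on $\bigcup\mathcal F(\mathcal S)$ to show that the aggregated lower bounds stay below the aggregated upper bounds, so that (LP) supplies a lift of $F(\alpha)$ in the consistent window. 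The combinatorial hypotheses that each direction contributes at most one facet and that at most one facet in $\mathcal F(\mathcal S)$ is upper are exactly what make this consistency check succeed; this bookkeeping is where I expect the bulk of the technical work to lie.
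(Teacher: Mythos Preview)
Your plan has a genuine gap. You propose to extend $L$ over $\mathcal S$ by invoking Lemma~\ref{lem:liftCuboid} \emph{inside} $\mathcal S$ (in the three cases matching (i)--(iii) there), and you correctly identify the obstacle afterwards: the bridging volume conditions for cuboids in $\mathcal C_1$ or $\mathcal C_2$ that have vertices both on $\mathcal S$ and on $\partial'_i(\mathcal C)$ or $\partial_i(\mathcal C)$. The problem is that Lemma~\ref{lem:liftCuboid} is a black box: it hands you \emph{some} partial lift on $\mathcal S$, chosen only with reference to cuboids contained in $\mathcal S$, and there is no reason the resulting values satisfy the bridging constraints. Your final paragraph suggests checking consistency of upper and lower bounds via Lemmas~\ref{lem:01}--\ref{lem:02} and then applying (LP), but at that point the values on $\mathcal S$ are already fixed and (LP) is no longer available; you cannot retroactively repair a bad choice made inside Lemma~\ref{lem:liftCuboid}. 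The combinatorial hypotheses on $\mathcal F(\mathcal S)$ do not help here, since Lemma~\ref{lem:liftCuboid} never sees the ambient cuboid $\mathcal C$ at all.

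The paper avoids this by never using Lemma~\ref{lem:liftCuboid} on $\mathcal S$. Instead it first \emph{maximalizes} $\mathcal F(\mathcal S)$: for each $j\in\Dim(\mathcal S)$ with neither facet present, it fills in $\partial'_j(\mathcal S)$ by the induction hypothesis applied to $\partial'_j(\mathcal C)$ --- a splitting situation one dimension down, which carries the bridging constraints along automatically. (You had the right idea in your recursive call, but only for one upper facet rather than systematically for all missing lower facets.) After this, exactly one vertex of $\mathcal S$ remains unlifted: either the top vertex $\beta$ (all facets lower) or a second-order vertex $\gamma$ (one upper facet). For that single vertex the paper lists the finitely many volume conditions in $\mathcal C_1$ and $\mathcal C_2$ that involve it, reduces them via Lemma~\ref{lem:01} to explicit two-sided bounds, verifies pairwise consistency using identities like $|\mathcal C_1+\mathcal C_2|_L=|\mathcal C|_L\ge 0$ and Lemma~\ref{lem:02}, and then applies (LP) once. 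The key organizational point you are missing is to postpone all use of (LP) until only one vertex is left, so that every relevant constraint --- internal to $\mathcal S$ and bridging alike --- is in hand simultaneously.
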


\begin{proof}
We first maximalize the collection $\mathcal F(\mathcal S)$. Let $j\in\Dim(\mathcal{S})$ be such that neither $\partial_j(\mathcal{S})$ nor  $\partial'_j(\mathcal{S})$ belongs to $F$. We like to extend $L$ on $\partial'_j(\mathcal{S})$. As each upper facet in $\mathcal F(\mathcal S)$ intersects $\partial'_j(\mathcal{S})$ in an upper facet of $\partial_j(\mathcal{S})$ and each lower facet in $\mathcal F(\mathcal S)$ intersects $\partial_j(\mathcal{S})$ in a lower facet of $\partial'_j(\mathcal{S})$, we can use the induction hypothesis, where we take $\partial'_j(\mathcal{C})$ for $\mathcal{C}$ and $\partial'_j (\mathcal F(\mathcal S))$ for $\mathcal F(\mathcal S)$. So we obtain a partial lift $L'$ defined on $\partial'_j(C)\cup\partial'_j(\mathcal{S})$.

We have to prove $L\cup L'$ is a partial lift. For this it is enough to realize that, whenever for some cuboid $\mathcal{D}$, we have $\mathcal{D}\subseteq (\partial'_j(\mathcal C)\cup\partial'_j(\mathcal{S}))\cup (\mathcal{C}\cup (\bigcup \mathcal F(\mathcal S)))$ (definition domain of $L'\cup L$), then $\mathcal{D}\subseteq \partial'_j(\mathcal C)\cup\partial'_j(\mathcal{S})$ (definition domain of $L'$) or  $\mathcal{D}\subseteq\mathcal{C}\cup (\bigcup \mathcal F(\mathcal S))$ (definition domain of $L$). If $\mathcal{D}\subseteq \mathcal{C}$, we are done. Otherwise by Lemma~\ref{lem:ortho2} we have $\mathcal{D}\subseteq \mathcal{C}_1$
or $\mathcal{D}\subseteq \mathcal{C}_2$. We can treat the both cases in similar fashion. We present proof of the one where $\mathcal{D}\subseteq \mathcal{C}_1$. If $i\in\Dim(\mathcal{D})$, then $\partial_i(\mathcal{D})\subseteq (\bigcup \mathcal F(\mathcal S))\cup \partial'_j(\mathcal{S})$. By Lemma~\ref{lem:ortho} either holds: $\partial_i(\mathcal{D})\subseteq \bigcup  \mathcal F(\mathcal S)$, and then $\mathcal{D}\subseteq \mathcal{C}\cup (\bigcup \mathcal F(\mathcal S))$, or $\partial_i(\mathcal{D})\subseteq \partial'_j(\mathcal S)$, and then $\mathcal{D}\subseteq \partial'_j(\mathcal{C})\cup\partial'_j(\mathcal{S})$. If $i\notin \Dim(\mathcal{C})$, then in the above deduction replace $\partial_i(\mathcal{C})$ with $\mathcal{C}$.

So we can assume for each $j\in\Dim(\mathcal{S})$ either $\partial_j(\mathcal{S})\in \mathcal F(\mathcal S)$ or  $\partial'_j(\mathcal{S})\in \mathcal F(\mathcal S)$. Two cases possibly occur: (i) all facets in $\mathcal F(\mathcal S)$ are lower and the top vertex $\beta$ of $\mathcal{S}$ is the only one which remains to lift, or (ii) there is $k$, such that $\partial_k(\mathcal{S})\in \mathcal F(\mathcal S)$ and $\gamma$ the only vertex  which remains to lift has order $2$ in $\mathcal{S}$.

Assume the case (ii). For some $g\in \Gamma^e_a(G,u)$ define $L'=L\cup\{(\gamma,g)\}$. We note that $\gamma$ has order $2$ in $\mathcal{C}_1$ and in $\mathcal{S}$ and order $3$ in $\mathcal{C}_2$. To assure $L$ is a partial lift on $\mathcal{C}_1$, the following volume condition has to hold (by analogy with~\eqref{ineq01}) and~\eqref{ineq02}:
\begin{align}
|\mathcal{C}_1|_{L'}\geq 0, \label{vc1}\\
|\partial'_k(\mathcal{C}_1)|_{L'}\geq 0.  \label{vc2}
\end{align}
The case of $\mathcal{C}_2$ requires these volume conditions:
\begin{align}
|\mathcal{C}_2|_{L'}\geq 0,  \label{vc3}\\
|\partial'_k(\mathcal{C}_2)|_{L'}\geq 0,  \label{vc4}\\
|\partial'_i(\mathcal{C}_2)|_{L'}\geq 0, \label{vc5}\\
|\partial'_k\circ \partial'_i(\mathcal{C}_2)|_{L'}\geq 0.  \label{vc6}
\end{align}
We claim inequations~(\ref{vc3})--(\ref{vc6}) are sufficient. Let $\mathcal{F}$ be any face of $\mathcal{C}_2$; it has a form
\begin{equation}\label{eq:03}
\mathcal{F}=\Theta_{i_1}\circ\cdots\circ \Theta_{i_e}(\mathcal{C}_2),\quad e\leq\dim\mathcal{S},
\end{equation} where each $\Theta_{i_k}\in\{\partial_{i_k},\partial'_{i_k}\}$. We first prove by an induction on the number of occurrence of $\partial_{i_k}$'s in~\eqref{eq:03}, that it is enough to treat the cases when each $\Theta_{i_k}=\partial'_{i_k}$. Since whenever $\mathcal{F}$ has form $\mathcal{F}=\partial_{j}(\mathcal{F}_0)$, then $|\mathcal{F}|_{L'}=|\partial'_{j}(\mathcal{F}_0)|_{L'}+|\mathcal{F}_0|_{L'}$. If the two summand are $\geq 0$, so is the left hand side. However, if $\mathcal{F}=\partial'_{i_1}\circ\cdots\circ \partial'_{i_e}(\mathcal{C}_2)$ contains $\gamma$, each $i_k$'s belongs to $\{i,k\}$, as $\gamma$ is the top vertex of $\partial'_{k}\circ\partial'_{i}(\mathcal{C}_2)$. So~(\ref{vc3})--(\ref{vc6}) are all the volume conditions that matter.

Note that~\eqref{vc5} and~\eqref{vc6} are volume conditions for sub-cuboids of $\mathcal{C}_1$ (since $\partial'_i(\mathcal{C}_2)=\partial_i(\mathcal{C}_1)$), and hence they follow from~\eqref{vc1} and~\eqref{vc2}. To assure $L'$ is a partial lift, inequalities~(\ref{vc1})--(\ref{vc4}) give us, according to Lemma~\ref{lem:01}, the following bounds:
\begin{align}
g\leq |\gamma+\mathcal{C}_1|_L,\label{bound1}\\
|\gamma-\partial'_k(\mathcal{C}_1)|_L\leq g,\label{bound2}\\
|\gamma-\mathcal{C}_2|_L\leq g,\label{bound3}\\
g\leq |\gamma+\partial'_k(\mathcal{C}_2)|_L.\label{bound4}
\end{align}
We already know by Lemma~\ref{lem:02} that~\eqref{bound1} and~\eqref{bound2} are consistent. So are~\eqref{bound1} and~\eqref{bound3}, since: $|\gamma-\mathcal{C}_2|_L\leq |\gamma+\mathcal{C}_1|_L\Leftrightarrow |\mathcal{C}_1+\mathcal{C}_2|_L\geq 0\Leftrightarrow |\mathcal{C}|_L\geq 0$. Next $|\gamma-\partial'_k(\mathcal{C}_1)|_L\leq |\gamma+\partial'_k(\mathcal{C}_2)|_L \Leftrightarrow 0\leq |\partial'_k(\mathcal{C}_1)+\partial'_k(\mathcal{C}_2)|_L= |\partial'_k(\mathcal{C})|_L$. Finally, $|\gamma-\mathcal{C}_2|_L\leq |\gamma+\partial'_k(\mathcal{C}_2)|_L \Leftrightarrow 0\leq |C_2+\partial'_k(\mathcal{C}_2)|_L=|\partial_k(\mathcal{C}_2)|_L$. Hence, we can apply the lifting property to obtain the desired lift in $\gamma$. The lift necessary belongs to $\Gamma^e_a(G,u)$ due to inequalities~\eqref{ineq04} in Lemma~\ref{lem:02}.

Next assume the (easier) case (i). Again define $L'=L\cup\{(\beta,g)\}$ for some $g\in G$. By the analogous arguments as in the case (ii), it is enough to assure volume conditions
\begin{align}
|\mathcal{C}_1|_{L'}\geq 0, \label{vc7}\\
|\mathcal{C}_2|_{L'}\geq 0, \label{vc8}\\
|\partial'_i(\mathcal{C}_2)|_{L'}\geq 0.\label{vc9}
\end{align}
But $\partial'_i(\mathcal{C}_2)=\partial_i(\mathcal{C}_1)$, hence the inequality~\eqref{vc7} implies~\eqref{vc9}. By Lemma~\ref{lem:01}, $$|\beta-\mathcal{C}_1|_L\leq g\leq |\beta+\mathcal{C}_2|_L$$ is equivalent condition to~\eqref{vc7} and~\eqref{vc8}. These bounds are consistent as $|\beta-\mathcal{C}_1|_L\leq |\beta+\mathcal{C}_2|_L\Leftrightarrow 0\leq |\mathcal{C}_1+\mathcal{C}_2|=|\mathcal{C}|_L$. Moreover, Lemma~\ref{lem:02}, inequalities~\eqref{ineq03} and~\eqref{ineq04} guarantee $0\leq |\beta-\mathcal{C}_1|_L\leq |\beta+\mathcal{C}_2|_L\leq u$. Hence, we can finish the proof by application of the lifting property.
\end{proof}

\begin{lemma}\label{lem:refinement}
Let $\mathcal{F}(\mathcal{C})$ be a collection of facets of a cuboid $\mathcal{C}$ such that,
for each $i\in\Dim(\mathcal{C})$, one of $\partial_i(\mathcal{C}),\partial'_i(\mathcal{C})$
 does not belong to $\mathcal{F}(\mathcal{C})$ and it
contains at most one upper facet. Next let $J\subseteq \Dim(\mathcal{C})$ (in these directions we want to find a refinement) and, for each $j\in J$, there is a real $c_j$, $a^\mathcal{C}_j<c_j<b^\mathcal{C}_j$. Denote $$X=\{(r_1,\ldots,r_n)\colon r_i\in\{a^\mathcal{C}_j,c_j,b^\mathcal{C}_j\} \text{\ for\ }j\in J, r_i\in\{a^\mathcal{C}_j,b^\mathcal{C}_j\}\text{\ for\ }j\notin J\}$$
and $Y\subset X$ be those vertices in $X$ which are inside some facet in $\mathcal{F}(\mathcal{C})$.  Then each partial lift $L$ on $\mathcal{C}\cup Y$ could be extended to a partial lift on $X$.
\end{lemma}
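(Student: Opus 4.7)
The plan is to proceed by induction on $|J|$. The base case $|J|=0$ is immediate, since then $X$ coincides with the vertex set of $\mathcal{C}$, on which $L$ is already defined. For the inductive step, I fix some $j\in J$, split $\mathcal{C}$ at $c_j$ into two cuboids $\mathcal{C}_1,\mathcal{C}_2$ in the sense of Definition~\ref{def:01}, and denote the associated section by $\mathcal{S}=\partial_j(\mathcal{C}_1)=\partial'_j(\mathcal{C}_2)$. The goal is to reduce the problem to three applications of the induction hypothesis---one on $\mathcal{S}$, one on $\mathcal{C}_1$, and one on $\mathcal{C}_2$---each with the strictly smaller parameter $|J|-1$.

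First I would extend $L$ to the vertices of $\mathcal{S}$ via Lemma~\ref{lem:section}, taking as the collection of already lifted facets of $\mathcal{S}$ the set $\mathcal{F}(\mathcal{S}):=\{\mathcal{F}\cap\mathcal{S}:\mathcal{F}\in\mathcal{F}(\mathcal{C})\setminus\{\partial_j(\mathcal{C}),\partial'_j(\mathcal{C})\}\}$. The ``at most one upper facet'' and ``one of $\partial_i,\partial'_i$ missing'' conditions are inherited from $\mathcal{F}(\mathcal{C})$, and every vertex of each $\mathcal{F}\cap\mathcal{S}$ already lies in $Y$ (being a refinement vertex of the parent facet $\mathcal{F}$ whose $j$-th coordinate happens to equal $c_j$). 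I then invoke the induction hypothesis on $\mathcal{S}$ with directions $J\setminus\{j\}$ and the same $\mathcal{F}(\mathcal{S})$ to lift all refinement vertices of $\mathcal{S}$. Next, the induction hypothesis is applied to $\mathcal{C}_1$ with $J\setminus\{j\}$ and $\mathcal{F}(\mathcal{C}_1):=\{\mathcal{S}\}\cup\{\partial'_i(\mathcal{C}_1):i\neq j,\ \partial'_i(\mathcal{C})\in\mathcal{F}(\mathcal{C})\}$: here $\mathcal{S}=\partial_j(\mathcal{C}_1)$ is the sole upper facet of the collection and $\partial'_j(\mathcal{C}_1)$ is deliberately omitted, so the hypothesis on the collection is satisfied; the refinement vertices on each member of $\mathcal{F}(\mathcal{C}_1)$ lie either on the refinement of $\mathcal{S}$ (when the $j$-th coordinate equals $c_j$) or in $Y$ (when the $j$-th coordinate equals $a^\mathcal{C}_j$), so all are already lifted. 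The cuboid $\mathcal{C}_2$ is handled analogously with $\mathcal{F}(\mathcal{C}_2):=\{\mathcal{S}\}\cup\{\partial'_i(\mathcal{C}_2):i\neq j,\ \partial'_i(\mathcal{C})\in\mathcal{F}(\mathcal{C})\}$, where now $\mathcal{S}=\partial'_j(\mathcal{C}_2)$ is a lower facet and so $\mathcal{F}(\mathcal{C}_2)$ contains no upper facet at all.

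After these three inductive extensions, $L$ is defined on $X_1\cup X_2=X$, the union of the refinement vertex sets of $\mathcal{C}_1$ and $\mathcal{C}_2$. To confirm that $L$ remains a partial lift, I verify the volume condition for an arbitrary sub-cuboid $\mathcal{D}\subseteq X$: if $\mathcal{D}\subseteq\mathcal{C}_1$ or $\mathcal{D}\subseteq\mathcal{C}_2$, the condition follows from the relevant inductive call, and otherwise $j\in\Dim(\mathcal{D})$ with $a^\mathcal{D}_j=a^\mathcal{C}_j$ and $b^\mathcal{D}_j=b^\mathcal{C}_j$, so splitting $\mathcal{D}$ at $c_j$ yields a decomposition $\mathcal{D}=\mathcal{D}_1+\mathcal{D}_2$ in $\mathcal{A}$ with $\mathcal{D}_k\subseteq X_k$, whence $|\mathcal{D}|_L=|\mathcal{D}_1|_L+|\mathcal{D}_2|_L\geq 0$. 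The main obstacle I anticipate is the careful bookkeeping of the auxiliary collections $\mathcal{F}(\mathcal{C}_1)$ and $\mathcal{F}(\mathcal{C}_2)$: in order to respect the ``at most one upper facet'' hypothesis I must deliberately discard some facets that are in fact lifted (for instance the inherited upper facet $\partial_{i_0}(\mathcal{C}_k)$ whenever $\partial_{i_0}(\mathcal{C})\in\mathcal{F}(\mathcal{C})$ with $i_0\neq j$), which is legitimate because the conclusion of the lemma does not require that every lifted face be declared within $\mathcal{F}(\cdot)$.
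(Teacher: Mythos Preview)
Your overall inductive scheme (split at $c_j$, handle $\mathcal{S}$, then $\mathcal{C}_1$ and $\mathcal{C}_2$) is close to the paper's, but the justification you give for ``discarding'' facets is where the argument breaks down.

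Suppose $\mathcal{F}(\mathcal{C})$ contains an upper facet $\partial_{i_0}(\mathcal{C})$ with $i_0\neq j$, or contains $\partial'_j(\mathcal{C})$. Then $Y$ contains refinement vertices $\alpha$ lying on that facet with $j$-th coordinate $a^{\mathcal{C}}_j$ (respectively, with $j$-th coordinate $a^{\mathcal{C}}_j$). Such $\alpha$ belong to $X_1$ but not to $C_1\cup Y_1$, because your $\mathcal{F}(\mathcal{C}_1)=\{\mathcal{S}\}\cup\{\partial'_i(\mathcal{C}_1):i\neq j,\ \partial'_i(\mathcal{C})\in\mathcal{F}(\mathcal{C})\}$ omits both $\partial_{i_0}(\mathcal{C}_1)$ and $\partial'_j(\mathcal{C}_1)$. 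When you invoke the induction hypothesis on $\mathcal{C}_1$ you feed it only $L\restriction(C_1\cup Y_1)$; the extension it returns is free to assign \emph{new} values at those $\alpha$, and there is no reason these coincide with the values the given $L$ already carries. The resulting map is then either ill-defined (if you try to keep both values) or fails to extend the original $L$ (if you overwrite). An analogous conflict occurs for $\mathcal{C}_2$ when $\partial_j(\mathcal{C})\in\mathcal{F}(\mathcal{C})$. Your sentence ``the conclusion of the lemma does not require that every lifted face be declared within $\mathcal{F}(\cdot)$'' misdiagnoses the issue: the problem is not what the conclusion requires, but that undeclared-yet-already-lifted vertices may be silently reassigned.

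The paper avoids this by \emph{not} refining $\mathcal{S}$ separately. It lifts only the vertices of $\mathcal{S}$ via Lemma~\ref{lem:section}, and then performs a case analysis on whether $\partial_j(\mathcal{C})\in\mathcal{F}(\mathcal{C})$ to decide which of $\mathcal{C}_1,\mathcal{C}_2$ to treat first. In the case $\partial_j(\mathcal{C})\in\mathcal{F}(\mathcal{C})$ it handles $\mathcal{C}_2$ first with $\partial_j(\mathcal{C}_2)=\partial_j(\mathcal{C})$ as the declared upper facet (and $\mathcal{S}$ \emph{not} declared, since its refinement is not yet lifted); the refinement of $\mathcal{S}$ is produced as a by-product, and only then is $\mathcal{C}_1$ treated with $\mathcal{S}=\partial_j(\mathcal{C}_1)$ as its sole upper facet. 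In the other case the order is reversed so that one never needs to include a pair of opposite facets or two upper facets in any $\mathcal{F}(\mathcal{C}_k)$, and crucially every facet of $\mathcal{C}_k$ carrying previously lifted refinement vertices is declared. Your three-step scheme can be repaired along these lines, but it requires this order-dependent bookkeeping rather than a blanket ``discard''.
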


\begin{proof}
We prove the lemma by an induction on $|J|$. The case $J=\{j\}$ is the same as previous Lemma~\ref{lem:section}. Suppose $J=\{j\}\cup J'$, where $|J'|<|J|$, and for simplicity suppose $j=1$. Similarly as in Lemma~\ref{lem:section} we have the splitting of $\mathcal{C}$ (along the j-th (first) coordinate) to $\mathcal{C}_1$ and $\mathcal{C}_2$ and we set $\mathcal{S}=\mathcal{C}_1\cap \mathcal{C}_2$.

We first apply the induction hypothesis where we take for $J$ the set $\{1\}$ and for $c_1$ the $c_1$. We obtain a lift $L_1$ defined on $\mathcal{C}\cup \mathcal{S}$. Before we processed further we have to verify $L_1\cup L$ is a partial lift.  As usual, we prove that whenever some cuboid $\mathcal{D}\subseteq (\mathcal{C}\cup \mathcal{S})\cup (\mathcal{C}\cup Y)$, then $\mathcal{D}\subseteq \mathcal{C}\cup \mathcal{S}$ or $\mathcal{D}\subseteq\mathcal{C}\cup Y$. On the way of contradiction assume $\alpha,\beta$ are two vertices in $\mathcal{D}$ such that $\alpha\in S\setminus (\mathcal{C}\cup Y)$ and $\beta\in Y\setminus (\mathcal{C}\cup \mathcal{S})$. In coordinates $\alpha=(c_1,*_2,\ldots,*_n)$ and $\beta=(\star_1,\ldots,c_i,\ldots,\star_n)$, where $*_k$'s belong to $\{a^{\mathcal{C}}_k,a^{\mathcal{C}}_k\}$, $\star_k$'s belong to $\{a^{\mathcal{C}}_k,c_k,a^{\mathcal{C}}_k\}$  and $i\not= 1$. As $\mathcal{D}$ is a cuboid, it contains also the vertex $\gamma=(c_1,*_2,\ldots,*_{i-1},c_i,*_{i+1},\ldots,*_n)$ (which shares with $\alpha$ all the coordinates until the $i$-th). Now $\gamma\notin \mathcal{C}\cup\mathcal{S}$ (because of it's $i$-th coordinate) so $\gamma\in Y$ and, by definition of $Y$, there is a facet $\partial_k(\mathcal{C})$ ($\partial'_k(\mathcal{C})$, resp.) in $\mathcal{F}(\mathcal{C})$ such that $\gamma$ is inside  the facet. That occurs iff $*_k=b^\mathcal{C}_k$ ($*_k=a^\mathcal{C}_k$, resp.). But in this case $\alpha$ is inside the facet as well and so it belongs to $Y$, which is a contradiction.

We like to apply the induction hypothesis to both cuboids $\mathcal{C}_1$ and $\mathcal{C}_2$ with $J'$ to obtain partial lifts $L_2,L_3$, but we have to be careful in what order. If $\partial_1(\mathcal{C})\in\mathcal{F}(\mathcal{C})$ (i.e., it is the only upper facet in $\mathcal{F}(\mathcal{C})$), then we first apply the induction hypothesis to $\mathcal{C}_2$, where to $\mathcal{F}(\mathcal{C}_2)$ we add $\partial_1(\mathcal{C}_2)$ ($=\partial_1(\mathcal{C})$) and moreover we add $\partial'_i(\mathcal{C}_2)$ whenever $\partial'_i(\mathcal{C})$ belongs to $\mathcal{F(\mathcal{C})}$. Then we apply the induction hypothesis to $\mathcal{C}_1$ with $J'$ and $\mathcal{F}(\mathcal{C}_1)$ defined in analogy with $\mathcal{F}(\mathcal{C}_2)$ (note that $\mathcal{C}_1\cap\mathcal{C}_2=\partial_j(\mathcal{C}_1)$ is included). In the case $\partial_1(\mathcal{C})\not\in\mathcal{F}(\mathcal{C})$, we first apply the induction hypothesis to $\mathcal{C}_1$, since otherwise, in the case $\partial'_j\mathcal{C}=\partial'_1(\mathcal{C}_1)\in\mathcal{F}(\mathcal{C})$, $\partial'_1(\mathcal{C}_1)$ and $\mathcal{C}_1\cap\mathcal{C}_2=\partial_1(\mathcal{C}_1)$, there  would be a pair of opposite facets of $\mathcal{C}_1$ which we would have to include to $\mathcal{F}(\mathcal{C}_1)$.

We have obtained a mapping $L_4=L_2\cup L_3$, which satisfies the volume condition for each cuboid having vertices in $X$ and which is inside $\mathcal{C}_1$ or $\mathcal{C}_2$. Each
cuboid $\mathcal{D}\subseteq X$ could be in the obvious way split into $\mathcal{D}_1$
and $\mathcal{D}_2$, where $\mathcal{D}_1$ is inside $\mathcal{C}_1$,
$\mathcal{D}_2$ is inside $\mathcal{C}_2$ and in $\mathcal{A}$ we have
$\mathcal{D}=\mathcal{D}_1+\mathcal{D}_2$. Consequently
$|\mathcal{D}|_{L_4}=|\mathcal{D}_1|_{L_4}+|\mathcal{D}_2|_{L_4}\geq 0$.
\end{proof}

\begin{proposition}\label{pr:main}
Let $(G,u)$ and $(H,v)$ be unital Dedekind monotone  $\sigma$-complete po-groups and let $\pi:(G,u)\rightarrow (H,v)$ be a homomorphism with (LP). Let $F$ be an $n$-dimensional spectral resolution on $(H,v)$. Then there is a countable and dense subset $D\subset \mathbb{R}^n$ and a partial lift $L$ of $F$ which is defined on $D$.
\end{proposition}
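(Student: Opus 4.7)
The plan is to construct $D$ and $L$ by iteratively refining an initial ``boundary'' cuboid using Lemma~\ref{lem:refinement}. I would introduce the formal cuboid $\mathcal{C}^\infty$ with vertex set $\{-\infty,+\infty\}^n$, assign $L(\alpha):=0$ whenever some $\alpha_i=-\infty$, and $L(+\infty,\ldots,+\infty):=u$. Properties~\eqref{eq:(3.2)} and~\eqref{eq:(3.4)} of the spectral resolution make these the natural boundary values (the limits of $F$ at infinity), and a short case check shows that the volume conditions on $\mathcal{C}^\infty$ and on all its faces reduce to $u\ge 0$ or $0\ge 0$, so this initial data forms a partial lift on the $2^n$ boundary vertices.

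Next, I fix an enumeration $r_1,r_2,\ldots$ of a countable dense subset of $\mathbb{R}$ (e.g., the rationals) and extend inductively. At stage $m$ the lifted vertex set is $V_m:=\{-\infty,r_1,\ldots,r_m,+\infty\}^n$, which partitions $\mathcal{C}^\infty$ into finitely many sub-cuboids whose corners lie in $V_m$. To pass from $V_m$ to $V_{m+1}$, I insert the new coordinate value $r_{m+1}$: for each sub-cuboid $\mathcal{C}$ whose coordinate range brackets $r_{m+1}$ in a nonempty set of directions $J(\mathcal{C})$, I apply Lemma~\ref{lem:refinement} with that $J(\mathcal{C})$ and $c_j:=r_{m+1}$, letting $\mathcal{F}(\mathcal{C})$ consist of those facets of $\mathcal{C}$ already refined through adjacent sub-cuboids processed earlier at the same stage.

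The key combinatorial ingredient is to process the sub-cuboids in lexicographic order of their lower corners. In this order, the sub-cuboid sharing the $i$-th lower facet of $\mathcal{C}$ (when it exists) differs from $\mathcal{C}$ only by having a strictly smaller $i$-th lower coordinate, hence is lex-earlier; upper neighbors are lex-later. Consequently the refined facets of $\mathcal{C}$ inherited from earlier-processed neighbors are all lower facets, so $\mathcal{F}(\mathcal{C})$ contains no opposite facet pair and no upper facet---exactly the hypothesis of Lemma~\ref{lem:refinement}. Taking $D:=\{r_1,r_2,\ldots\}^n$ (a countable dense subset of $\mathbb{R}^n$) and letting $L:=\bigcup_m L_m$ restricted to $D$ gives the required partial lift.

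The main obstacle is to justify the use of Lemma~\ref{lem:refinement} in the presence of the formal corners $\pm\infty$ of $\mathcal{C}^\infty$, since the lemma is stated for cuboids with finite corners; one must verify that its (LP)-based consistency-of-bounds argument still goes through when some corner lifts carry the boundary values $0$ or $u$, which amounts to checking that these values appear as valid constants in all the inequalities arising in the proof. The remaining bookkeeping---tracking which facets of each sub-cuboid have been refined across stages and verifying that the lex order yields an admissible $\mathcal{F}(\mathcal{C})$ at every application of Lemma~\ref{lem:refinement}---is routine once the setup is in place.
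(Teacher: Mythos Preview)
Your approach differs substantially from the paper's. The paper never introduces $\pm\infty$ corners: it starts by tiling $\mathbb{R}^n$ with unit cubes having vertices in $\mathbb{Z}^n$, orders them via a carefully constructed enumeration (the Claim inside the proof) so that each new cube meets the union of the earlier ones in a configuration admissible for Lemma~\ref{lem:liftCuboid}, and then iteratively halves the mesh using Lemma~\ref{lem:refinement} with that same ordering. This keeps every vertex that ever needs to be lifted inside $\mathbb{R}^n$, where $F$ is actually defined.

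Your proposal has a genuine gap at the very first refinement step. Passing from $V_0=\{-\infty,+\infty\}^n$ to $V_1=\{-\infty,r_1,+\infty\}^n$ forces you to lift all $3^n-2^n$ new vertices, and most of these are \emph{mixed-infinity} points such as $(r_1,+\infty,\dots,+\infty)$. Inside the proof of Lemma~\ref{lem:refinement} (and of Lemma~\ref{lem:section} which it invokes), the lift $g$ of a new vertex $\gamma$ is produced by applying (LP) with target value $F(\gamma)\in\Gamma^e_a(H,v)$; but $F$ is only defined on $\mathbb{R}^n$, so $F(\gamma)$ has no meaning at a mixed-infinity $\gamma$. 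Your description of the obstacle---that one need only check that the already-lifted boundary corners carry the constants $0$ or $u$ correctly in the bound inequalities---misses the real issue: the problem is not the \emph{bounds} in (LP) but the \emph{target} of (LP). To rescue the argument you would have to extend $F$ to all of $\overline{\mathbb{R}}^n$ via iterated suprema, prove that this extension still satisfies every volume condition on cuboids with possibly infinite corners (this is where \eqref{eq:(3.2)} and the monotonicity statement~\eqref{eq:(3.8)} would enter, not just the trivial \eqref{eq:(3.4)}), and then check that Lemmas~\ref{lem:01}--\ref{lem:refinement} go through verbatim for such cuboids. That is a real piece of additional work, comparable in size to the paper's Claim, not a routine check; the paper sidesteps it entirely by the finite-cube tiling.
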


\begin{proof}
Define for each $l\in\mathbb{N}_0$ the set $\mathcal{U}_l$ of all $n$-cubes with coordinates in $\frac{1}{2^l}\mathbb{Z}$ which have edges of length $\frac{1}{2^l}$ (e.g., all $n$-cuboids $\mathcal{C}$, for which $a^{\mathcal{C}}_j,b^{\mathcal{C}}_j\in\frac{1}{2^l}\mathbb{Z}$ and $b^{\mathcal{C}}_j-a^{\mathcal{C}}_j=\frac{1}{2^l}$ for each $j$, $1\le j\leq n$). Our strategy is as follows: We will inductively construct a sequence of partial lifts $L_0\subset L_1\subset \cdots$ such that each $L_l$ is defined on $D_l:=\bigcup_{i\le l}\mathcal{U}_i$. The set $D:=\bigcup_{l\geq 0}D_l$ is countable and dense in $\mathbb{R}^n$ and a partial mapping $\bigcup_{l\geq 0}L_l$ will be the desired partial lift.

\vspace{3mm}
\textit{Claim } {\it There is a list $\mathcal{E}_1,\mathcal{E}_2,\ldots$ of all the $n$-cubes in $\mathcal{U}_l$, $l\geq 0$, so that the following property $(*)$ holds: For each $m\in\mathbb{N}$, we have $\mathcal{E}_m\cap (\bigcup_{j< m} \mathcal{E}_j)$ is of one of following types: {\rm (i)} empty, {\rm (ii)} one lower facet of $\mathcal{E}_m$, {\rm (iii)} union of one upper facet $\partial_k(\mathcal{E}_m)$ and a collection of lower facets $\partial'_j(\mathcal{E}_m)$, where $J\subseteq\{1,\ldots,n\}\setminus \{k\}$.}
\vspace{2mm}

Using Claim, it is rather easy to construct the $L_l$'s: Suppose $\mathcal{C}_1,\mathcal{C}_2,\ldots$ is the list from Claim of all the elements in $\mathcal{U}_0$. We first apply Lemma~\ref{lem:liftCuboid} to $\mathcal{C}_0$, then to $\mathcal{C}_1$ and so on inductively we define a lift $L_0$. In next step we like to extend $L_0$ on each point with coordinates in $\frac{1}{2}\mathbb{Z}$, that is on $D_1=\bigcup \mathcal{U}_1$. To achieve this, we use another induction process. At the first step, we use Lemma~\ref{lem:refinement} to the first $n$-cube $\mathcal{C}_1\in\mathcal{U}_0$: We leave the set $\mathcal F(\mathcal C_1)$ from the statement empty and we set $J=\{1,\ldots,n\}$ and $c_j=\frac{a^{\mathcal{C}_j}+b^{\mathcal{C}_j}}{2}$, $j\in J$, so the points in $X$ are exactly the ones of $D_1$ we want to lift. In the $m$-th step we use Lemma~\ref{lem:refinement} in a similar way, the only difference is that we cannot set $\mathcal F(\mathcal C_1)$ empty, since points inside some facets of $\mathcal{C}_m$ have already been lifted, but the collection of these facets is, by the property $(*)$, in a convenient form.

In the next step we use an analogous induction process, but with $\mathcal{U}_2$ in place of $\mathcal{U}_1$. So we find lifts in all points with coordinates in $\frac{1}{2^2}\mathbb{Z}$. Similarly we find partial lifts $L_3\subset L_4\subset\cdots$. The desired dense set and partial lift are $D:=\bigcup_{m\in\mathbb{N}} D_m$ and $L:=\bigcup_{m\in\mathbb{N}} L_m$. The construction guarantees for each $L_m$ the volume condition $|\mathcal{C}|_{L_m}\geq 0$, only for $\mathcal{C}$ being a face of some cuboid in $\mathcal{U}_m$. However, it is clear that each cuboid with vertices in $D_m$ has as an element of $\mathcal{A}$ a decomposition $\mathcal{C}=\mathcal{D}_1+\cdots +\mathcal{D}_k$ such that all $\mathcal{D}_i$'s are already faces of some cuboids in $\mathcal{U}_m$.
Finally, each volume condition $|\mathcal{C}|_L\geq 0$ holds, where $\mathcal{C}\subset D$, since $\mathcal{C}\subset D_m$ for some $m$, as $\mathcal{C}$ has only a finite number of vertices.

\begin{proof}[Proof of Claim]
It is enough to prove, that given any $n$-cuboid $\mathcal{C}$ (arbitrary large) and a list $\mathcal{E}_1,\ldots,\mathcal{E}_{m_{\mathcal{C}}}$ of all unit cubes inside $\mathcal{C}$, then whenever we enlarge $\mathcal{C}$ in some of the $2n$ directions by one to get $\mathcal{C}'$, we can add at the end of the list all the new unit cuboids inside $\mathcal{C}'$ but not inside $\mathcal{C}$, so that $(*)$
still holds. An obvious inductive construction then gives the desired list.

So suppose we are given a cuboid $\mathcal{C}$ and the list $\mathcal{E}_1,\ldots,\mathcal{E}_{m_{\mathcal{C}}}$ and we want to move some lower
facet $\partial'_i{\mathcal{C}}$.
Denote by $\mathcal{D}$ the extra part of $\mathcal{C}'$ (i.e., $\mathcal{C}$ and $\mathcal{D}$ is a splitting of $\mathcal{C}'$). Observe that each unit cuboid $\mathcal{D}'$ inside $\mathcal{D}$ intersects the union $\bigcup_{j=1}^{m_\mathcal{C}}\mathcal{E}_k$ in exactly one of its upper facet: $\partial_i(\mathcal{D}')$. Hence, we basically need to find a list $\mathcal{D}_1,\ldots,\mathcal{D}_{m_{\mathcal{D}}}$ of all the unit cuboids inside $\mathcal{D}$ such that the intersection of any $D_k$ with the union of the previous ones is a collection of lower facets of $\mathcal{D}_k$. Then the list $\mathcal{E}_1,\ldots,\mathcal{E}_{m_{\mathcal{C}}},\mathcal{D}_1,\ldots,\mathcal{D}_{m_{\mathcal{D}}}$ will satisfy condition $(*)$. But finding the desired list is easy: Any ordering of the axis gives us a lexicographic ordering of the unit cuboids, which restricts to desired list $\mathcal{D}_1,\ldots,\mathcal{D}_{m_{\mathcal{D}}}$.

Next suppose we want to move an upper facet $\partial_i(\mathcal{C})$. Let $\mathcal{D}$ have the same meaning as above and set $m:=b_l^{\mathcal{D}}-a_l^{\mathcal{D}}$ for some $l\in\{1,\ldots,n\}\setminus\{i\}$. In this situation, we need to find a list $\mathcal{D}_1,\ldots,\mathcal{D}_{m_{\mathcal{D}}}$ of all the unit cuboids inside $\mathcal{D}$ such that the intersection of any $\mathcal D_k$ with the union of the previous ones is (i) empty or (ii) a union of some upper facet $\partial_o(\mathcal{D}_k)$ and a collection of lower facets $\partial'_j(\mathcal{D}_k)$, $j\in J$, where $J\subseteq \{1,\ldots,n\}\setminus\{i,o\}$. We prove this using an induction on the dimension $n$. The case $n=2$ is very easy to deal with: The cuboids in concern form simply a column of height $m$, so we list them from the top one to to bottom one. For $n\geq 3$, we can with respect to the $l$-axis dived the unit cuboids inside $\mathcal{D}$ into $m$ floors. At the first step we like to find such a list for the top floor, but this is a one dimensional less situation, so we can apply the induction hypothesis. Next we like to add the unit cuboid from the second highest floor, but each cuboid $\mathcal{D}'$ in concern already shares exactly one its upper facet $\partial_l(\mathcal{D}')$ with the ones from the top floor. So it reduces to the case of the previous paragraph. Then we similarly treat the third highest floor and so on until we reach the bottom floor.
\end{proof}
\end{proof}

Before presenting a concluding result of the section, we introduce the following notion. Let $\pi:(G,u)\rightarrow (H,v)$ be a surjective $\sigma$-homomorphism of unital Dedekind monotone $\sigma$-complete po-groups. A mapping $K:\mathbb R^n\to \Gamma^e_a(G,u)$ is said to be an {\it almost $n$-dimensional spectral resolution} if it satisfies (\ref{eq:(3.3)})--(\ref{eq:(3.5)}) and there is an element $u_0:=\bigvee_{(t_1,\ldots,t_n)} K(t_1,\ldots,t_n)\in \Gamma^e_a(G,u)$ such that $\pi(u_0)=v$.

Now, we present the main result of the section -- lifting of spectral resolutions.

\begin{theorem}\label{th:lifting}{\rm [}Lifting of Spectral Resolutions{\rm ]}
Let $\pi:(G,u)\rightarrow (H,v)$ be a $\sigma$-homomorphism of unital Dedekind monotone $\sigma$-complete po-groups and let $\pi$ satisfy (LP). Then each $n$-dimensional spectral resolution $F:\mathbb{R}^n\rightarrow H$ can be lifted to an almost $n$-dimensional spectral resolution $K: \mathbb{R}^n\rightarrow G$ such that  $\pi\circ K=F$.
\end{theorem}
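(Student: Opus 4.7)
My plan is to apply Proposition~\ref{pr:main} to obtain a partial lift on a countable dense subset of $\mathbb R^n$, extend it to all of $\mathbb R^n$ by left-continuous suprema, and finally subtract off an element of $\ker\pi$ to enforce~(\ref{eq:(3.4)}).

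First I apply Proposition~\ref{pr:main} to get a countable dense $D\subseteq\mathbb R^n$ and a partial lift $L:D\to\Gamma^e_a(G,u)$ with $\pi\circ L=F|_D$; the volume condition applied to $1$-cuboids (single edges) forces $L$ to be componentwise monotone on $D$. I then define
\[
K_0(s_1,\dots,s_n):=\bigvee\{L(d):d\in D,\ d\ll(s_1,\dots,s_n)\},
\]
which exists in $[0,u]$ by Dedekind $\sigma$-completeness of $G$ together with countability of $D$ (which lets the directed supremum be realised along an increasing sequence). Then I verify that $K_0$ is a lift of $F$ and satisfies~(\ref{eq:(3.1)}), (\ref{eq:(3.3)}), (\ref{eq:(3.5)}) together with the ``almost'' requirement. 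The identity $\pi\circ K_0=F$ uses that $\pi$ preserves countable suprema, $\pi\circ L=F|_D$, and left-continuity of~$F$ combined with density of $D$. Monotonicity and left-continuity of $K_0$ are direct from its definition. For~(\ref{eq:(3.5)}) on an $n$-cuboid $\mathcal C$, I approximate each vertex of $\mathcal C$ from below along a cofinal sequence in $D$, apply the volume condition of $L$ on the approximating cuboids (inherited from the partial-lift property), and pass to limits inside $G$ using $\sigma$-completeness and the formula $V=\sum_\alpha(-1)^{|\alpha|}K_0(\alpha)$ of~(\ref{eq:volum}). Finally $u_0:=\bigvee_sK_0(s)$ satisfies $\pi(u_0)=\bigvee_sF(s)=v$ by~(\ref{eq:(3.2)}) for $F$.

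To enforce~(\ref{eq:(3.4)}) I set $e:=\bigwedge_{t\in\mathbb R^n}K_0(t)$, which exists by $\sigma$-completeness and lies in $\ker\pi$ since $\pi$ preserves countable infima and $F$ satisfies~(\ref{eq:(3.4)}); then I define $K:=K_0-e$. By construction $K\ge 0$, $\pi\circ K=F$, and (\ref{eq:(3.1)}), (\ref{eq:(3.3)}), (\ref{eq:(3.5)}) all survive the translation because they are expressible via differences, while the almost condition persists with $u_0-e$ in place of $u_0$. The main obstacle is the remaining verification of~(\ref{eq:(3.4)}) for $K$, equivalently the equality
\[
\bigwedge_{t_i}K_0(s_1,\dots,t_i,\dots,s_n)\;=\;e
\qquad\text{for every $i$ and every $(s_j)_{j\ne i}$.}
\]
The inequality $\ge e$ is immediate from monotonicity of $K_0$; the reverse requires interchanging the countable infimum over $t_i$ with the supremum defining $K_0$, and then ``sliding'' the remaining coordinates $s_j$ down to $-\infty$. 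I expect this to follow by exploiting the volume relations between $L$-values on cuboids whose vertices in $D$ approach the relevant boundary at $-\infty$, together with the lifting property~(LP) and the $\sigma$-homomorphism property of $\pi$, which together let one force any putative lower bound on the partial infimum to lie below $K_0(t)$ for \emph{every} $t\in\mathbb R^n$, and hence below $e$.
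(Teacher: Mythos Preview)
Your approach follows the paper through the construction of $K_0$ from the partial lift $L$ on the dense set $D$, and the verification of monotonicity, left-continuity, and the volume condition for $K_0$ is essentially the same as the paper's. The real divergence --- and the gap --- is in how you attempt to enforce~(\ref{eq:(3.4)}).

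You subtract a \emph{single constant} $e:=\bigwedge_{t\in\mathbb R^n}K_0(t)$ and then try to argue that
\[
\bigwedge_{t_i}K_0(s_1,\dots,t_i,\dots,s_n)=e
\]
for every $i$ and every choice of the remaining coordinates. This equality is generally false. The partial lift $L$ produced by Proposition~\ref{pr:main} is built cuboid-by-cuboid using (LP); nothing in that construction controls the behaviour of $L$ (and hence of $K_0$) as one coordinate alone tends to $-\infty$. The ``error'' $K_0-F$ lives in $\ker\pi$, but it can perfectly well depend on the remaining variables: for instance $K_0(t_1,t_2)=F(t_1,t_2)+g(t_2)$ with $g$ monotone and $\ker\pi$-valued is entirely compatible with the output of Proposition~\ref{pr:main}, and then $\bigwedge_{t_1}K_0(t_1,s_2)=g(s_2)$, which varies with $s_2$ and differs from $e=\bigwedge_{t_2}g(t_2)$. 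None of the tools you invoke in the last paragraph (volume relations, (LP), $\sigma$-preservation of $\pi$) can force these coordinate-wise infima to collapse to a single constant; $\pi$ only tells you they all lie in $\ker\pi$.

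The paper handles this correctly by an \emph{iterated, coordinate-by-coordinate} repair: set $K_1(t_1,\dots,t_n):=K_0(t_1,\dots,t_n)-\bigwedge_{t}K_0(t,t_2,\dots,t_n)$, where the subtracted term is a \emph{function of $t_2,\dots,t_n$}, then verify that $K_1$ still satisfies all volume conditions (this is non-trivial, requiring a case split on whether $1\in\Dim(\mathcal C)$ and a limiting argument when $1\notin\Dim(\mathcal C)$), and that $\bigwedge_{t_1}K_1=0$. One then repeats for the second coordinate to get $K_2$, checking that both $\bigwedge_{t_1}K_2=0$ and $\bigwedge_{t_2}K_2=0$, and so on up to $K_n$. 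Your single-constant subtraction skips precisely this work, and the claim you label ``the main obstacle'' is not merely unproved but in general wrong.
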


\begin{proof}
According to Proposition~\ref{pr:main}, there are a countable subset $D \subset \mathbb R^n$ dense in $\mathbb R^n$ and a partial lift $L:D\rightarrow G$. Recall that $D=D_{\pi}^n$, where $D_{\pi}=\{k/2^l\colon l\ge 1, k\in \mathbb Z\}$ is a dense subset of $\mathbb{R}$, and $L$ is monotone on $D$.
Define
$K_0:\mathbb{R}^n\rightarrow G$ by prescription
\begin{equation}
K_0(t_1,\ldots,t_n)=\bigvee_{s_i\in D_\pi, s_i<t_i} L(s_1,\ldots,s_n).
\end{equation}

Note that the supremum exists because if we take two sequences $\{(s^j_1,\ldots,s_n^j)\}_j$ and $\{(u^j_1,\ldots,u_n^j)\}_j$ of elements of $D$ such that $(s^j_1,\ldots,s_n^j),(u^j_1,\ldots,u_n^j)\ll (t_1,\ldots,t_n)\in \mathbb R^n$ for each $j\ge 1$ and $\{(s^j_1,\ldots,s_n^j)\}_j \nearrow (t_1,\ldots,t_n)$ and $\{(u^j_1,\ldots,u_n^j)\}_j\nearrow (t_1,\ldots,t_n)$, then monotonicity of $L$ implies that
$$
\bigvee_j L(s^j_1,\ldots,s_n^j)\quad \text{and}\quad \bigvee_j L(u^j_1,\ldots,u_n^j)
$$
exist in $G$ and
$$
\bigvee_j L(s^j_1,\ldots,s_n^j)=\bigvee_j L(u^j_1,\ldots,u_n^j).
$$
Hence, $K_0(t_1,\ldots,t_n)$ is correctly defined and
$$K_0(t_1,\ldots,t_n)=\bigvee_j L(s^j_1,\ldots,s_n^j)=\bigvee_{(t_1,\ldots,t_n)\gg(s_1,\ldots,s_n)\in D}L(s_1,\ldots,s_n).
$$


Since each vertex of any cuboid is an $n$-tuple of reals, we will denote it by a Greek letter $\mathbf{\alpha}=(\alpha_1,\ldots,\alpha_n),\mathbf{\beta}= (\beta_1,\ldots,\beta_n)$, etc., and we write for simplicity $K_0(\mathbf{\alpha}):=K_0(\alpha_1,\ldots,\alpha_n)$.

Now, $K_0$ is monotone in each component (directly from definition) and all volume conditions hold: Given any $d$-cuboid in $\mathcal{C}$, we can write the volume condition in the form
\begin{equation}\label{eq:volumeIneq}
K_0(\alpha_1)+\cdots+K_0(\alpha_{2^{d-1}})\leq
K_0(\beta_1)+\cdots+K_0(\beta_{2^{d-1}}),
\end{equation}
where $\alpha_i$'s are all the vertices in $\mathcal{C}$ (hence $n$-tuples of reals) with even order and $\beta_i$'s are all the vertices in $\mathcal{C}$ with odd order. By the construction of $D$, for each $\epsilon >0$, there exists a $d$-cuboid $\mathcal{D}\subset D$, which is sufficiently close to $\mathcal{C}$: For each $i=1,\ldots,n$, we find $a^{\mathcal{D}}_i,b^{\mathcal{D}}_i\in D_{\pi}$, such that
$a^{\mathcal{C}}_i-\epsilon< a^\mathcal{D}_i<a^{\mathcal{C}}_i$ and
$b^{\mathcal{C}}_i-\epsilon< b^\mathcal{D}_i<b^{\mathcal{C}}_i$ (and hence the vertices of the cuboid $\mathcal{D}$ given by $a^{\mathcal{D}}_i$'s and
$a^{\mathcal{D}}_i$'s belong to $D$). Moreover, we can assume
$a^{\mathcal{D}}_i=b^{\mathcal{D}}_i$ for each $i\notin \Dim(\mathcal{C})$.

Given any $\alpha'_1,\ldots,\alpha'_{2^{d-1}}\in D$ such that for each $i=1,\ldots,n$, $\alpha'_i \ll\alpha_i$,
there is
$\epsilon>0$, for which $\alpha'_i\ll \alpha_i-\vec{\epsilon}$ for each $\alpha_i$, where $\vec{\epsilon}=(\epsilon,\ldots,\epsilon)\in \mathbb R^n$.
Hence, as we have proved above, there is a $d$-cuboid $\mathcal{D}$, such that for
each $i=1,\ldots, 2^{d-1}$ we have $\alpha'_i<\gamma_i<\alpha_i$, where $\gamma_i$
is the corresponding vertex of $\mathcal{D}$. From the volume condition for
$\mathcal{D}$ we deduce
$$ K_0(\alpha'_1)+\cdots+K_0(\alpha'_{2^{d-1}})\leq
K_0(\gamma_1)+\cdots+K_0(\gamma_{2^{d-1}})\leq
K_0(\delta_1)+\cdots+K_0(\delta_{2^{d-1}})\leq
K_0(\beta_1)+\cdots+K_0(\beta_{2^{d-1}}),$$
where $\delta_i$'s are all the vertices in $\mathcal{D}$ with even order.
Consequently, inequality~\eqref{eq:volumeIneq} holds by the definition of $K_0$ and
the fact that $+$ distributes over $\vee$.

The function $K_0$ yet does not have to vanish when some coordinate goes to $-\infty$. We
have to repair this. Define $K_0(-\infty,t_2,\ldots,t_n):=\bigwedge_t
K_0(t,t_2,\ldots,t_{n})$ (it exists since $K_0$ is monotone) and for $K_1(t_1,\ldots,t_n):=K_0(t_1,\ldots,t_n)-K_0(-\infty,t_2,\ldots,t_n)$, we have $\bigwedge_{t_1}K_1(t_1,\ldots,t_n)=0$.  We have to verify that
$K_1:\mathbb{R}^d\rightarrow G$ still satisfies the volume conditions. Let
$\mathcal{C}$ be a cuboid; two cases could occur (1) $1\in\Dim(\mathcal{C})$ or (2)
$1\not\in\Dim(\mathcal{C})$. In the case (1) we realize that
$|\mathcal{C}|_{K_1}=|\mathcal{C}|_{K_0}$, since for any edge $e$ with
$\Dim(e)=\{1\}$ clearly $|e|_{K_0}=|e|_{K_1}$ and $\mathcal{C}$ could be as element of $\mathcal{A}$ written as
a linear combination of such edges (since $1\in\Dim(\mathcal{C})$). Now consider the
case (2). For each $i\in\mathbb{N}_0$ define $\mathcal{C}_n$ a cuboid which shares
with $\mathcal{C}$ all coordinates except $a_1^{\mathcal{C}_n}=a_1^{\mathcal{C}}-n$
(hence it arises by moving the original cuboid $\mathcal{C}$ down in the direction
of the first axis by $n$). See that for each $i\in\mathbb{N}_0$ we have
$\mathcal{C}_{i}-\mathcal{C}_{i+1}$  equals a cuboid $\mathcal{D}_i$ (as an element
of $\mathcal{A}$) for which $1\in\Dim(\mathcal{D}_i)$. So by the case (1),
$|\mathcal{C}_i|_{K_1}=|\mathcal{C}_{i+1}|_{K_1}+|\mathcal{D}_i|_{K_1}\geq
|\mathcal{C}_{i+1}|_{K_1}$, that is $\{|\mathcal{C}_i|_{K_1}\}_{i\in\mathbb{N}_0}$ is
a decreasing sequence. By monotone $\sigma$-completeness, there is $c=\bigwedge_i
|\mathcal{C}_i|_{K_1}$, for which we want to prove $c=0$, as this trivially implies
$|\mathcal C|_{K_1}=|\mathcal C_0|_{K_1}\geq 0$. Let us list $\alpha^j_i$ ($\beta^j_i$, respectively),
$j=1,\ldots, 2^{\dim(\mathcal{C})-1}$ all the vertices in $\mathcal{C}_i$ with even
(odd, respectively) order. Note that as $i\rightarrow\infty$, $K_1(\alpha^j_i)\searrow 0$
as well as $K_1(\beta^j_i)\searrow 0$ for each $j=1,\ldots,2^{\dim(\mathcal{C})}$
(since the first coordinate of $\alpha_i^j$ ($\beta_i^j$, resp.) goes to $-\infty$).
We have the following estimations:
$$-(K_1(\alpha_i^1)+\cdots+K_1(\alpha_i^{2^(d-1)}))\leq
K_1(\beta_i^1)+\cdots+K_1(\beta_i^{2^{(d-1)}})-(K_1(\alpha_i^1)+\cdots+K_1(\alpha_i^{2^{(d-1)}}))=|\mathcal{C}_i|_{K_1},$$
$$|\mathcal{C}_i|_{K_1} =
K_1(\beta_i^1)+\cdots+K_1(\beta_i^{2^{(d-1)}})-(K_1(\alpha_i^1)+\cdots+K_1(\alpha_i^{2^{(d-1)}}))\leq
K_1(\beta_i^1)+\cdots+K_1(\beta_i^{2^{(d-1)}}).$$
As $-(K_1(\alpha_i^1)+\cdots+K_1(\alpha_i^{2^{(d-1)}}))\nearrow 0$ and
$K_1(\beta_i^1)+\cdots+K_1(\beta_i^{2^{(d-1)}})\searrow 0$ we have $c=0$.

Suppose $\alpha_i\nearrow \alpha:=(t_1,\ldots,t_n)$, then $K_1(\alpha)\geq
K_1(\alpha_i)\geq K_0(\alpha_i)-K_0(-\infty,t_2,\ldots,t_n)\nearrow
K_0(\alpha)-K_0(-\infty,t_2,\ldots,t_n) = K_1(\alpha)$.
If $K_0(t_1,\ldots,t_{i-1},-\infty,t_i,\ldots,t_n)=0$ for some $i$, clearly
$$
K_1(t_1,\ldots,t_{i-1},-\infty,t_{i+1},\ldots,t_n)=0
$$
as well. Consequently, $K_1$ satisfies all volume conditions, consequently, $K_1$ is monotone.

If $K_1(t_1,-\infty,t_3,\ldots,t_n):=\bigwedge_{t_2}K_1(t_1,\ldots,t_n)>0$, we repeat the above procedure with $K_2(t_1,\ldots,t_n)= K_1(t_1,\ldots,t_n)-K_1(t_1,-\infty,t_2,\ldots,t_n)$, $(t_1,\ldots,t_n)\in \mathbb R^n$. Then $\bigwedge_{t_1} K_2(t_1,\ldots,t_n)=0=\bigwedge_{t_2} K_2(t_1,\ldots,t_n)$. In particular, all the volume conditions hold and (specially) $K_2$ is monotone.
By induction, for each $i=1,\ldots,n-1$, we define
$$
K_i(t_1,\ldots,t_i,-\infty,t_{i+2},\ldots,t_n) = \bigwedge _{t_{i+1}} K_i(t_1,\ldots,t_n)
$$
and
$K_{i+1}(t_1,\ldots,t_n)=K_i(t_1,\ldots,t_n)- K_i(t_1,\ldots,t_i,-\infty,t_{i+2},\ldots,t_n)$. Then $\bigwedge_{t_j}K_{i+1}(t_1,\ldots,t_n)=0$ for $j=1,\ldots,i+1$.

Finally, $\pi\circ K_1= F$, since $\pi(K_0(-\infty,t_2,\ldots,t_n))=0$ for each real $t_i$'s, because
$\pi$ is a $\sigma$-homomorphism. If we repeat the process for all coordinates, we obtain a mapping $K:=K_n:\mathbb{R}^n\rightarrow G$ which is monotone and satisfies conditions~(\ref{eq:(3.3)})--(\ref{eq:(3.5)}). In addition, $\pi\circ K=F$. If we define $u_0=\bigvee_{(s_1,\ldots,s_n)}K(s_1,\ldots,s_n)$ (it exists due to monotonicity of $K$), then $\pi(u_0)=v$, and we see that $K$ is an almost $n$-dimensional spectral resolution on $G$ which finishes the proof.
\end{proof}

\section{$n$-dimensional Spectral Resolutions and $n$-dimensional Observables}

The following two results can be proved by applying the Loomis--Sikorski Theorem for $\sigma$-complete MV-algebras as well as using Lifting Theorem \ref{th:lifting}. They are the second main results of the paper.

If $M$ is a $\sigma$-complete MV-algebra or a monotone $\sigma$-complete effect algebra with (RDP), then it is always an interval in a unital $\sigma$-complete $\ell$-group or in a unital Dedekind monotone $\sigma$-complete po-group with interpolation. So an $n$-dimensional spectral resolution is a mapping $F: \mathbb R^n\to M$ such that (\ref{eq:(3.1)})--(\ref{eq:(3.5)}) holds, where $u=1$ is the top element of $M$.

For the Loomis--Sikorski Theorem, see \cite{Dvu1,Mun1}, we need the notion of a tribe of fuzzy sets: A {\it tribe} on $\Omega \ne \emptyset$
is a family ${\mathcal T}$ of fuzzy sets from $[0,1]^\Omega$ such
that (i) $1 \in {\mathcal T}$, (ii) if $f \in {\mathcal T}$, then $1 - f \in
{\mathcal T}$, and (iii) if $\{f_n\}_n$ is a sequence from ${\mathcal T}$,
then $\min \{\sum_{n=1}^\infty f_n,1 \}\in {\mathcal T}$.  A tribe is
always a $\sigma$-complete MV-algebra where all operations are defined by points.

\begin{theorem}\label{th:4.1}
Let $M$ be a $\sigma$-complete MV-algebra. Then there is a one-to-one  correspondence between $n$-dimensional spectral resolutions and $n$-dimensional observables on $M$.
\end{theorem}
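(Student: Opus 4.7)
The injective direction is immediate: two $n$-dimensional observables $x_1,x_2$ with $F_{x_1}=F_{x_2}$ agree on the $\pi$-system of semi-infinite blocks $(-\infty,s_1)\times\cdots\times(-\infty,s_n)$, which generates $\mathcal B(\mathbb R^n)$, so a standard Dynkin-class argument together with the $\sigma$-additivity of each $x_i$ forces $x_1=x_2$. The content of the theorem is the surjective direction: from a given spectral resolution $F$ one must construct an $n$-dimensional observable $x$ with $F_x=F$.

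My plan is to combine the Loomis--Sikorski theorem for $\sigma$-complete MV-algebras with the Lifting Theorem~\ref{th:lifting}. Loomis--Sikorski provides a nonempty set $\Omega$, a tribe $\mathcal T\subseteq[0,1]^\Omega$ whose countable MV-operations are computed pointwise, and a surjective $\sigma$-homomorphism $\pi:\mathcal T\to M$. I first verify that $\pi$ has the lifting property: finite lattice operations in a $\sigma$-complete MV-algebra are MV-terms and hence preserved by $\pi$, so for any finite $L\le U$ with $\pi(L)\le h\le\pi(U)$ and any preimage $g_0$ of $h$ the element $g:=(g_0\vee\bigvee L)\wedge\bigwedge U$ still lies in $\mathcal T$ and satisfies $\pi(g)=h$ and $L\le g\le U$. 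Theorem~\ref{th:lifting} then yields an almost $n$-dimensional spectral resolution $K:\mathbb R^n\to\mathcal T$ with $\pi\circ K=F$ and $u_0:=\bigvee_{(t_1,\ldots,t_n)}K(t_1,\ldots,t_n)$ satisfying $\pi(u_0)=1$.

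Because every countable supremum and infimum in $\mathcal T$ is pointwise, for each $\omega\in\Omega$ the function $K^\omega(s_1,\ldots,s_n):=K(s_1,\ldots,s_n)(\omega)$ is monotone, pointwise left-continuous, has non-negative volumes, vanishes as any coordinate goes to $-\infty$, and tends to $u_0(\omega)$ at $(+\infty,\ldots,+\infty)$. On the set $\Omega_1:=\{\,\omega\colon u_0(\omega)=1\,\}$ the map $K^\omega$ is therefore a genuine classical $n$-dimensional distribution function; the complement is $\pi$-negligible since $\pi(1-u_0)=0$, and $K$ can be modified on it without changing $\pi\circ K$ so that $K^\omega$ is a distribution function for every $\omega$. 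The classical Lebesgue--Stieltjes theorem then produces, for each $\omega$, a unique Borel probability measure $\mu^\omega$ on $\mathcal B(\mathbb R^n)$ such that $\mu^\omega(\langle a_1,b_1)\times\cdots\times\langle a_n,b_n))$ equals the pointwise volume $V_{K^\omega}$ of that block.

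Setting $y(A)(\omega):=\mu^\omega(A)$ for $A\in\mathcal B(\mathbb R^n)$ and $x:=\pi\circ y$, the family $\mathcal D:=\{\,A\in\mathcal B(\mathbb R^n)\colon y(A)\in\mathcal T\,\}$ is a Dynkin system (closure of $\mathcal T$ under complementation and pointwise monotone countable limits) and contains the $\pi$-system of semi-closed blocks, where $y(A)$ is a finite MV-combination of values of $K$; hence $\mathcal D=\mathcal B(\mathbb R^n)$. The pointwise $\sigma$-additivity of each $\mu^\omega$ then promotes $y$ to a $\mathcal T$-valued $n$-dimensional observable, and applying the $\sigma$-homomorphism $\pi$ yields the desired $M$-valued observable $x$ with $F_x=\pi\circ F_y=\pi\circ K=F$. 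The main obstacle is the passage from the purely order-theoretic lift produced by Theorem~\ref{th:lifting} to a coherent pointwise family of classical distribution functions: this rests on the pointwise character of countable operations in a tribe, together with a careful treatment of the $\pi$-negligible defect set $\{\,u_0<1\,\}$; once that bridge is built, the rest reduces to classical Lebesgue--Stieltjes measure theory and the Dynkin class argument above.
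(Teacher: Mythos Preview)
Your argument is essentially the paper's second proof: Loomis--Sikorski gives a tribe $\mathcal T$ and a surjective $\sigma$-homomorphism $\pi:\mathcal T\to M$; you check (LP) for $\pi$ exactly as the paper does (via $g=(g_0\vee\bigvee L)\wedge\bigwedge U$); you invoke Theorem~\ref{th:lifting} to obtain an almost spectral resolution $K$; and you then use the pointwise character of $\mathcal T$, classical Lebesgue--Stieltjes theory, and a Dynkin argument to build $y$ and set $x=\pi\circ y$. The paper also gives a first proof that bypasses Theorem~\ref{th:lifting} entirely by embedding $M$ into the tribe via $a\mapsto\hat a$ and then correcting the embedded $F$ by hand.

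One point deserves tightening. You ``modify $K$ on the $\pi$-negligible set $\{u_0<1\}$'' so that every $K^\omega$ becomes a genuine distribution function. This is fine for the \emph{canonical} Loomis--Sikorski tribe described in the paper (where $\mathcal T=\{f:f\sim\hat a\text{ for some }a\}$ and $\{u_0<1\}$ is meager, so any modification on it stays in $\mathcal T$), but for an arbitrary tribe representing $M$ it is not automatic that such a modification remains in $\mathcal T$. The paper sidesteps this by not modifying at all: each $K^\omega$ determines a unique \emph{finite} Borel measure $P_\omega$ with total mass $u_0(\omega)$, one still has $y(A)\in\mathcal T$ via the same Dynkin argument, and then $x(\mathbb R^n)=\pi(u_0)=1$ gives the observable property without ever needing $u_0=1$ pointwise. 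Either fix---specifying the canonical tribe or dropping the modification---closes the gap with no further work.
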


\begin{proof} The first proof.
Let $\Omega=\partial \mathcal S(M)$. Then $\Omega$ is a basically disconnected Hausdorff compact topological space under the weak topology of extremal states. Given $a\in M$, let $\hat a:\partial \mathcal S(M)\to [0,1]$ be a mapping defined by $\hat a(s):=s(a)$, $s \in \partial \mathcal S(M)$. Then $\hat a$ is a continuous function on $\partial \mathcal S(M)$, $\hat a \in \Gamma(C(\Omega),1_\Omega))$, and $(C(\Omega),1_\Omega)$ is a Dedekind $\sigma$-complete unital $\ell$-group, even a Riesz space.

The mapping $\phi: a\to \hat a$ is an injective MV-homomorphism from $M$ into $\Gamma(C(\Omega),1_\Omega)$. Let $\mathcal T$ be a tribe of fuzzy sets generated by $\{\hat a\colon a \in M\}$. For $f,g: [0,1]^\Omega$, we write $f\sim g$ if $\{s\in \partial \mathcal S(M)\colon f(s)\ne g(s)\}$ is a meager set. According to the proof of the Loomis--Sikorski Theorem, \cite[Thm 4.1]{Dvu1}, $\mathcal T=\{f\in [0,1]^\Omega\colon f \sim \hat a$ for some $a\in M\}$, and the mapping $\pi:\mathcal T\to M$, defined by $\pi(f)=a$ iff $f\sim \hat a$, is a surjective $\sigma$-homomorphism of MV-algebras.

Hence, let $F:\mathbb R^n\to M$ be an $n$-dimensional spectral resolution. We define a mapping $K_0:\mathbb R^n \to \mathcal T$ by $K_0(t_1,\ldots,t_n)=\phi(F(t_1,\ldots,t_n))$, $t_1,\ldots,t_n\in \mathbb R$. Then $K_0$ is monotone, satisfies the volume condition, and $\pi\circ K_0=F$. In the following, we will calculate $\bigvee$ and $\bigwedge$ in $\mathcal T$.

We define
$$
K_0(-\infty,t_2,\ldots,t_n)=\bigwedge_{t_1}K_0(t_1,\ldots,t_n),
$$
then $K_0(-\infty,t_2,\ldots,t_n)\in \mathcal T$ (since $K$ is monotone, it is enough to take $t_1\in \mathbb Q$, $t_1\searrow -\infty$),
and let $K_1(t_1,\ldots,t_n)=K_0(t_1,\ldots,t_n)-   K_0(-\infty,t_2,\ldots,t_n)$, $t_1,\ldots,t_n\in \mathbb R$. Then $\bigwedge_{t_1} K_1(t_1,\ldots,t_n)=0$ and $\pi(K_1(t_1,\ldots,t_n))=F(t_1,\ldots,t_n)$.

Now, we show that $K_1$ is monotone and it satisfies all the volume condition of the form (\ref{eq:(3.6)}). Clearly that $K_1(s_1,t_2,\ldots,t_n)\le K_1(t_1,\ldots,t_n)$ where $s_1\le t_1$.

Let $(s_1,s_2,t_3,\ldots,t_n)\le (t_1,t_2,\ldots,t_n)$. Using (\ref{eq:(3.7)}) for $K_0$, we have
\begin{align*}
K_0(t_1,t_2,t_3,\ldots,t_n)+ K_0(s_1,s_2,t_3,\ldots,t_n)\ge
K_0(t_1,s_2,t_3,\ldots,t_n) +K_0(s_1,t_2,t_3,\ldots,t_n).
\end{align*}
If $s_1\searrow -\infty$, we have
\begin{align*}
K_0(t_1,t_2,t_3,\ldots,t_n)+ K_0(-\infty,s_2,t_3,\ldots,t_n)&\ge
K_0(t_1,s_2,t_3,\ldots,t_n)+K_0(-\infty,t_2,t_3,\ldots,t_n)\\
K_0(t_1,t_2,t_3,\ldots,t_n)-K_0(-\infty,t_2,t_3,\ldots,t_n)&\ge
K_0(t_1,s_2,t_3,\ldots,t_n) - K_0(-\infty,s_2,t_3,\ldots,t_n)\\
K_1(t_1,t_2,t_3,\ldots,t_n)&\ge K_1(t_1,s_2,t_3,\ldots,t_n).
\end{align*}

The same is true for each $i=3,\ldots,n$, i.e.
$$
K_1(t_1,\ldots,t_n)\ge K_1(s_1,t_2,\ldots,t_{i-1},s_i,t_{i+1},\ldots,t_n)
$$
whenever $s_i \le t_i$. Hence, if $(s_1,\ldots,s_n)\le (t_1,\ldots,t_n)$, then $K_1(s_1,\ldots,s_n)\le K_1(t_1,s_2,\ldots,s_n)\le K_1(t_1,t_2,s_3,\ldots,s_n)\le \cdots \le K_1(t_1,\ldots,t_i,s_{i+1},\ldots,s_n)\le \cdots \le K_1(t_1,\ldots,t_n)$.

Using the left-hand side of (\ref{eq:(3.5)}) for $K_1$, we see that it is equal to the left-hand side of (\ref{eq:(3.6)}) for $K_0\ge 0$, so that the volume condition (\ref{eq:(3.5)}) for $K_1$ also holds.

Set
$$
  K_1(t_1,-\infty,t_3,\ldots,t_n)=\bigwedge_{t_2}K_1(t_1,\ldots,t_n)
$$
and $K_2(t_1,\ldots,t_n)=K_1(t_1,\ldots,t_n) -   K_1(t_1,-\infty,t_3,\ldots,t_n)$. Then $\bigwedge_{t_2}K_2(t_1,\ldots,t_n)=0$ and $\pi\circ K_2=F$. Moreover,
\begin{align*}
K_2(-\infty,t_2,\ldots,t_n)&=\bigwedge_{t_1} K_2(t_1,\ldots,t_n)= \bigwedge_{t_1}(K_1(t_1,\ldots,t_n) -K_2(t_1,-\infty,t_3,\ldots,t_n))\\
&\le \bigwedge_{t_1} K_1(t_1,\ldots,t_n) = 0.
\end{align*}
In an analogous way, we can establish that $K_2$ is monotone and for it every volume condition (\ref{eq:(3.5)}) holds.

By induction, for each $i=1,\ldots,n-1$, we define
$$
  K_{i}(t_1,\ldots,t_{i},-\infty,t_{i+2},\ldots,t_n)=\bigwedge_{t_{i+1}} K_i(t_1,\ldots,t_n)
$$
and $K_{i+1}(t_1,\ldots,t_n)= K_{i}(t_1,\ldots,t_n)-   K_{i}(t_1,\ldots,t_{i},-\infty,t_{i+2},\ldots,t_n)$.
Then $\bigwedge_{t_j}K_{i+1}(t_1,\ldots,t_n)=0$ for each $j=1,\ldots,i+1$ and $\pi\circ K_{i+1}=F$. After finitely many steps, we define also $K_n$. Then each $K_1,\ldots,K_n$ is monotone and each $K_i$ satisfies all the volume conditions (\ref{eq:(3.6)}).

Define a mapping $K: \mathbb R^n \to \mathcal T$ by
\begin{equation}\label{eq:4.1}
K(t_1,\ldots,t_n)= \bigvee_{(s_1,\ldots,s_n)\ll(t_1,\ldots,t_n)} K_n(s_1,\ldots,s_n),\quad t_1,\ldots,t_n \in \mathbb R.
\end{equation}

First we have to note that $K$ is correctly defined, because due to monotonicity of $K_n$, it is enough to use $s_1,\ldots,s_n\in \mathbb Q$. Therefore, $\pi \circ K= F$.
Clearly, $K$ is monotone. We assert that $K$ is a mapping satisfying  (\ref{eq:(3.3)})--(\ref{eq:(3.5)}).

Equality (\ref{eq:(3.3)}): Clearly $\bigvee_{(s_1,\ldots,s_n)\ll(t_1,\ldots,t_n)} K(s_1,\ldots,s_n)\le K(t_1,
\ldots,t_n)$. To prove the opposite inequality, let $(s_1'',\ldots,s_n'')\ll(t_1,\ldots,t_n)$. There is an $n$-tuple $(s_1',\ldots,s_n')$ such that $(s_1'',\ldots,s_n'')\ll (s_1',\ldots,s_n')\ll (t_1,\ldots,t_n)$. Hence,
$$
K_n(s_1'',\ldots,s_n'')\le K(s_1',\ldots,s_n') \le K(t_1,\ldots,t_n)
$$
taking supremum over all $(s_1'',\ldots,s_n'')\ll(t_1,\ldots,t_n)$, we get
$$
K(t_1,\ldots,t_n)=\bigvee_{(s''_1,\ldots,s''_n)\ll (t_1,\ldots,t_n)} K_n(s_1'',\ldots,s_n'')\le\bigvee_{(s_1',\ldots,s_n')\ll(t_1,\ldots,t_n)} K(s_1',\ldots,s_n') \le K(t_1,\ldots,t_n).
$$

Equality (\ref{eq:(3.4)}) follows from:
$$
\bigwedge_{t_i}K(t_1,\ldots,t_n)=\bigwedge_{t_i} \bigvee_{(s_1,\ldots,s_n)\ll(t_1,\ldots,t_n)} K_n(s_1,\ldots,s_n) \le \bigwedge_{t_i}K_n(t_1,\ldots,t_n)=0.
$$
Now, we verify the volume condition (\ref{eq:(3.5)}) for $K$. Let $A=\langle a_1,b_1)\times\cdots \times \langle a_n,b_n)$ be given.
We note that $K_0$ satisfies the volume condition. Therefore, we express the volume condition in the form $LHS_{K_0}(A) \ge RHS_{K_0}(A)$, where on both sides there are sums of positive terms of $K_0$. Subtracting from both side the elements $K_0(-\infty,t_2,\ldots,t_n)$, we obtain the volume condition for $K_1$. Analogously, we can show that each $K_2,\ldots, K_n$ satisfies the volume condition.

For any integer $k\ge 1$, let $A_k=\langle a_1-1/k,b_1-1/k)\times\cdots \times \langle a_n-1/k,b_n-1/k)$.
We denote by $LHS_{K_n}(A_k)$, $RHS_{K_n}(A_k)$ the left-hand and right-hand side of the volume condition of $A_k$ in $K_n$ and let $LHS_K(A)$, $RHS_K(A)$ be analogous expressions for $A$ in the mapping $K$.
Since $K_n$ satisfies the volume condition, for each $k\ge 1$, we have
\begin{align*}
LHS_{K_n}(A_k) &\ge RHS_{K_n}(A_k)\\
LHS_K(A)=\lim_k LHS_{K_n}(A_k) &\ge \lim_k RHS_{K_n}(A_k)= RHS_K(A),
\end{align*}
so that $K$ satisfies the volume condition.
Denote by $u_0=\bigvee_{(t_1,\ldots,t_n)}K(t_1,\ldots,t_n)$. Then $\pi(u_0)=1$.

Therefore, we have a system $\{K(t_1,\ldots,t_n)\colon t_1,\ldots,t_n\in \mathbb R\}$ of fuzzy sets on $\Omega$ in the tribe $\mathcal T$. For each fixed $\omega \in \Omega$, the function $K_\omega: \mathbb R^n \to [0,1]$ defined by $K_\omega(t_1,\ldots,t_n):=K(t_1,\ldots,t_n)(\omega)$, $t_1,\ldots,t_n \in \mathbb R$, is left continuous, going to $0$ if $t_i \to -\infty$ with non-negative increments. According to \cite[Thm 2.25]{Kal}, there is a unique $\sigma$-additive finite measure $P_\omega$ on $\mathcal B(\mathbb R^n)$ such that $P_\omega((-\infty,t_1)\times \cdots\times (-\infty,t_n))=F_\omega(t_1,\ldots,t_n)$.  Therefore, we have a mapping $\xi:\mathcal B(\mathbb R^n)\to [0,1]^\Omega$ such that $\xi(A)(\omega)= P_\omega(A)$ for all $A \in \mathcal B(\mathbb R^n)$ and all $\omega \in \Omega$.  We denote by $\mathcal K=\{A\in \mathcal B(\mathbb R^n)\colon \xi(A)\in \mathcal T\}$. Then $\mathcal K$ contains $\mathbb R^n$, all intervals of the form $(-\infty,t_1)\times\cdots\times (-\infty,t_n)$, and is closed under complements and unions of disjoint sequences, i.e. $\mathcal K$ is a Dynkin system and by the Sierpi\'nski Theorem, \cite[Thm 1.1]{Kal}, $\mathcal K = \mathcal B(\mathbb R^n)$.
Then $x(A):=\pi(\xi(A))$, $A \in \mathcal B(\mathbb R^n)$, is an $n$-dimensional observable on $M$ such that $x((-\infty,t_1)\times\cdots \times (-\infty,t_n))=F(t_1,\ldots,t_n)$, $t_1,\ldots,t_n \in \mathbb R$.

Uniqueness of $x$: Let $y$ be another $n$-dimensional observable such that $y((-\infty,t_1)\times\cdots \times (-\infty,t_n))=F(t_1,\ldots,t_n)$, $t_1,\ldots,t_n \in \mathbb R$. Let $\mathcal H=\{A \in \mathcal B(\mathbb R^n)\colon x(A)=y(A)\}$. Then $\mathcal H$ is a Dynkin system containing all intervals $(-\infty,t_1)\times \cdots\times (-\infty, t_n)$ so that by the Sierpi\'nski Theorem, $\mathcal H= \mathcal B(\mathbb R^n)$ which shows $x=y$.
\end{proof}

Now, we present the second proof using Lifting Theorem \ref{th:lifting}:

\begin{proof}
According to the Loomis--Sikorski theorem for $\sigma$-complete MV-algebras, see \cite{Dvu1,Mun1}, there is a tribe $\mathcal T$ of fuzzy sets in $\Omega\ne \emptyset$ and a surjective $\sigma$-homomorphism $\pi:\mathcal T \to M$.

We assert that $\pi$ has the lifting property. Let $L,U \subseteq \mathcal{T}$ be finite and $w\in M$ such that $L\leq U$ and $\pi(L)\leq w\leq \pi(U)$. Put $u=\bigvee U$ and $v=\bigwedge V$. Then $u\le v$ and for $w\in M$ there is $f_1\in \mathcal T$ such that $\pi(f_1)=w$. If we set $f=u\vee(v\wedge f_1)$, then $L\le f \le U$ and $\pi(L)\le \pi(f)=w\le \pi(U)$.

The $\sigma$-complete MV-algebra $M$ is an interval in a Dedekind $\sigma$-complete unital $\ell$-group $(H,v)$, i.e. we can assume $M=\Gamma(H,v)$ and similarly, the tribe $\mathcal T$ is an interval in the Dedekind $\sigma$-complete $\ell$-group $\mathcal G$ of bounded functions on $\Omega$ with the pointwise ordering, so that $\mathcal T=\Gamma(\mathcal G,1_\Omega)$. In addition, the surjective $\sigma$-homomorphism $\pi$ can be extended to a unique surjective $\sigma$-homomorphism from $\mathcal G$ onto $H$.

Applying Lifting Theorem \ref{th:lifting}, there is an almost $n$-dimensional spectral resolution $K:\mathbb R^n\to \mathcal T$ such that $\pi\circ K=F$. Therefore, we have a system $\{K(t_1,\ldots,t_n)\colon t_1,\ldots,t_n \in \mathbb R\}$ of fuzzy sets from the tribe $\mathcal T$. To finish the proof, we use literally the rest of the first proof of the present theorem.
\end{proof}

In the next result, we extend the ideas of the proof of the latter theorem for monotone $\sigma$-complete effect algebras with (RDP). Instead of tribe, we need the following notion: An {\it effect-tribe}  is any system ${\mathcal T}$ of fuzzy sets on $\Omega\ne \emptyset $ such that (i) $1 \in {\mathcal T}$, (ii) if $f\in {\mathcal T},$ then $1-f \in {\mathcal T}$, (iii) if $f,g \in {\mathcal T}$,
$f \le 1-g$, then $f+g \in {\mathcal T},$ and (iv) for any sequence
$\{f_n\}_n$ of elements of ${\mathcal T}$ such that $\{f_n\}_n \nearrow f$
(pointwisely), then $f \in {\mathcal T}$. Then every
effect-tribe is a monotone $\sigma$-complete effect algebra where all operations are defined by points.

Let $f$ be a real-valued function on ${\mathcal S}(E)$, where $E$ is not stateless.  We define
$$
N(f) :=\{s \in \partial{\mathcal S}(E) :\, f(s) \ne 0\}.
$$

\begin{theorem}\label{th:4.2}
Let $E$ be a monotone $\sigma$-complete effect algebra satisfying the Riesz Decomposition Property. Then there is a one-to-one  correspondence between $n$-dimensional spectral resolutions and $n$-dimensional observables on $E$.
\end{theorem}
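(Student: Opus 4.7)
The plan is to mimic the two-track strategy used for Theorem~\ref{th:4.1}, adapted to the effect-algebraic setting. First I would invoke the Loomis--Sikorski theorem for monotone $\sigma$-complete effect algebras with (RDP) (in the effect-tribe form pointed to by the abstract): this produces a non-empty base space $\Omega$, an effect-tribe $\mathcal T\subseteq [0,1]^\Omega$ and a surjective $\sigma$-homomorphism of effect algebras $\pi:\mathcal T\to E$. By the Ravindran representation, $E\cong \Gamma^e_a(H,v)$ for a unique unital Dedekind monotone $\sigma$-complete po-group $(H,v)$ with interpolation, and $\mathcal T$ is itself the unit interval in a Dedekind monotone $\sigma$-complete unital po-group $(\mathcal G,1_\Omega)$ with interpolation (the group of differences of elements of $\mathcal T$ equipped with the pointwise partial order), so that $\pi$ extends to a surjective $\sigma$-homomorphism $(\mathcal G,1_\Omega)\to (H,v)$.

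The first obstacle, and the key technical step, is to verify that this $\pi$ enjoys the lifting property (LP). Given finite $L\le U$ in $\mathcal T$ and $w\in E$ with $\pi(L)\le w\le \pi(U)$, I would pick any $f_0\in\mathcal T$ with $\pi(f_0)=w$ and then use the interpolation property in $\mathcal G$ (which holds because $\mathcal G$ consists of bounded real functions under pointwise order and $E$ has (RDP)) to interpolate an element $f\in\mathcal T$ between the finite sets $\{L,\max(f_0\wedge v_0,0) \text{-type bounds}\}$ and $\{U\}$, adjusting so that $\pi(f)=w$. Concretely, by the interpolation in $(\mathcal G,1_\Omega)$ one finds $f$ with $L\le f\le U$; for the value one replaces $f_0$ by $f_0 + (f-f_0)\chi_N$ on a suitable $\pi$-negligible (meager/$\pi$-null) set $N$, using that $\ker\pi$ is order-convex and stable under such modifications in the Loomis--Sikorski representation. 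This is precisely the step where the (RDP) and the tribe structure cooperate.

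Once (LP) is in place, I invoke the Lifting Theorem~\ref{th:lifting} to lift the spectral resolution $F:\mathbb R^n\to E$ to an almost $n$-dimensional spectral resolution $K:\mathbb R^n\to \mathcal T$ with $\pi\circ K=F$. For each fixed $\omega\in\Omega$ the function $K_\omega(t_1,\ldots,t_n):=K(t_1,\ldots,t_n)(\omega)$ is real-valued, monotone, left-continuous, tends to $0$ when any $t_i\to -\infty$ and has non-negative increments; by \cite[Thm 2.25]{Kal} there is a unique finite Borel measure $P_\omega$ on $\mathcal B(\mathbb R^n)$ with $P_\omega((-\infty,t_1)\times\cdots\times(-\infty,t_n))=K_\omega(t_1,\ldots,t_n)$. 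Setting $\xi(A)(\omega):=P_\omega(A)$ defines a map $\xi:\mathcal B(\mathbb R^n)\to [0,1]^\Omega$. To show that $\xi(A)\in\mathcal T$ for every Borel $A$, I would consider $\mathcal K:=\{A\in\mathcal B(\mathbb R^n):\xi(A)\in\mathcal T\}$: this class contains all the quadrants $(-\infty,t_1)\times\cdots\times(-\infty,t_n)$ (as $\xi$ of such a set equals $K(t_1,\ldots,t_n)\in\mathcal T$), is closed under complements (since $\mathcal T$ is closed under $f\mapsto 1-f$) and under countable disjoint unions (since $\mathcal T$ is closed under summable families by (iii)+(iv) of the effect-tribe axioms). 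Hence $\mathcal K$ is a Dynkin system containing a $\pi$-system generating $\mathcal B(\mathbb R^n)$, so by the Sierpi\'nski (monotone class / Dynkin) theorem \cite[Thm 1.1]{Kal} $\mathcal K=\mathcal B(\mathbb R^n)$.

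Finally, I define $x(A):=\pi(\xi(A))$ for $A\in\mathcal B(\mathbb R^n)$. The three requirements of an $n$-dimensional observable transfer from $P_\omega$: $x(\mathbb R^n)=\pi(1_\Omega)=1$, finite additivity on disjoint pairs, and $\sigma$-continuity from below (the latter using that $\xi(A_i)\nearrow \xi(A)$ pointwise in $\mathcal T$ and that $\pi$ is a $\sigma$-homomorphism). By construction $x((-\infty,t_1)\times\cdots\times(-\infty,t_n))=\pi(K(t_1,\ldots,t_n))=F(t_1,\ldots,t_n)$. Uniqueness of $x$ follows from a second Dynkin-system argument: $\mathcal H:=\{A:x(A)=y(A)\}$ is a Dynkin system containing the generating $\pi$-system of quadrants, hence equals $\mathcal B(\mathbb R^n)$. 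The main obstacle, as indicated above, will be step two: establishing the lifting property (LP) for the Loomis--Sikorski $\sigma$-homomorphism $\pi:\mathcal T\to E$ in the absence of lattice operations, which is resolved via the interpolation coming from (RDP).
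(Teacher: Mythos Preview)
Your overall architecture is exactly the paper's second proof of Theorem~\ref{th:4.2}: Loomis--Sikorski effect-tribe $\mathcal T$ with $\pi:\mathcal T\to E$, establish (LP), apply Theorem~\ref{th:lifting} to lift $F$ to an almost spectral resolution $K$ on $\mathcal T$, build the pointwise measures $P_\omega$, push forward via $\pi$, and conclude with two Dynkin arguments. So the strategy is correct.

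The gap is in your argument for (LP). Interpolation in $\mathcal G$ gives you an $f$ with $L\le f\le U$, but you then have no control over $\pi(f)$; your proposed ``modify $f_0$ on a $\pi$-null set $N$ by $f_0+(f-f_0)\chi_N$'' does not do what you want, since such a modification changes $f_0$ only on $N$ and there is no reason the result lies between $L$ and $U$. The paper's device is much more concrete and avoids interpolation altogether: in the specific Loomis--Sikorski representation on $\Omega=\mathcal S(E)$ (where $\pi(f)=a$ iff $N(f-\hat a)$ is meager), one has the key Claim (quoted from \cite[Thm~4.2]{DvLa1}) that whenever $f\sim a$, $g\sim b$ and $a\wedge b$ (resp.\ $a\vee b$) exists in $E$, then $\min\{f,g\}\in\mathcal T$ with $\min\{f,g\}\sim a\wedge b$ (resp.\ $\max\{f,g\}\sim a\vee b$). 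Given $L\le U$ in $\mathcal T$, $\pi(L)\le w\le \pi(U)$ and any $f_0$ with $\pi(f_0)=w$, one then simply takes $f=\min\bigl(U,\max(L,f_0)\bigr)$ (iterated pointwise min/max over the finite families); each step preserves the $\pi$-value $w$ because $\pi(l)\vee w=w$ and $\pi(u)\wedge w=w$, and the result satisfies $L\le f\le U$. That is the whole (LP) argument.

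Two minor points. First, since $K$ is only an \emph{almost} spectral resolution, $\xi(\mathbb R^n)=u_0=\bigvee K(t_1,\ldots,t_n)$, not $1_\Omega$; you still get $x(\mathbb R^n)=\pi(u_0)=1$, but your line ``$x(\mathbb R^n)=\pi(1_\Omega)$'' is not quite right. Second, the paper also records a first proof that bypasses Theorem~\ref{th:lifting} entirely: because the embedding $\varphi:a\mapsto \hat a$ is injective, one can set $K_0=\varphi\circ F$ directly and then perform the $K_0\to K_1\to\cdots\to K_n\to K$ corrections inside $\mathcal T$ exactly as in the first proof of Theorem~\ref{th:4.1}. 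Your route is the second proof; both are valid.
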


\begin{proof}
Given $a \in E$, let $\hat a:\mathcal S(E)\to [0,1]$ be a function defined by $\hat a(s)=s(a)$, $s \in \mathcal S(E)$. Then $\hat a$ is an affine continuous function on $\mathcal S(E)$. The mapping $\varphi: E \to   E=\{\hat a\colon a \in E\}$, defined by $\varphi(a)=\hat a$, $a \in E$, is an isomorphism of effect algebras $E$ and $\widehat E$. If we define $\mathcal T$ as the effect-tribe generated by $\widehat E$, then due to the proof of the Loomis--Sikorski Theorem, see \cite{BCD}, $\mathcal T$ is the set of all fuzzy sets $f$ on $\Omega=\mathcal S(E)$ such that there is $a \in E$ with $N(f-\hat a)$ being a meager set. Then $\mathcal T$ is an effect-tribe with (RDP) and the mapping $\pi:\mathcal T \to E$, given by $\pi(f)=a$ iff $N(f-\hat a)$ is a meager set, is a $\sigma$-epimorphism of effect algebras.

Let $F$ be an $n$-dimensional spectral resolution on $E$. Analogously as in the proof of Theorem \ref{th:4.1}, we define a mapping $K_0:\mathcal B(\mathbb R^n)\to \mathcal T$ by $K_0(t_1,\ldots,t_n)= \varphi(F(t_1,\ldots,t_n))$, $t_1,\ldots,t_n \in \mathbb R$. Then $K_0$ is monotone, it satisfies the volume condition, and $\pi\circ K_0=F$.

We define
$$
  K_0(-\infty,t_2,\ldots,t_n)=\bigwedge_{t_1}K_0(t_1,\ldots,t_n).
$$
Using monotonicity of $K_0$, we can see that the element $K_0(-\infty,t_2,\ldots,t_n)$ exists in $\mathcal T$.
If we set
$$
K_1(t_1,\ldots,t_n)=K_0(t_1,\ldots,t_n)-   K_0(-\infty,t_2,\ldots,t_n), \quad t_1,\ldots,t_n\in \mathbb R,
$$ then $\bigwedge_{t_1} K_1(t_1,\ldots,t_n)=0$.

Having $K_i$ for $i=1,\ldots,n-1$, we construct $K_{i+1}$ by induction as follows: Let
$$
  K_{i}(t_1,\ldots,t_{i},-\infty,t_{i+2},\ldots,t_n) =\bigwedge_{t_{i+1}} K_i(t_1,\ldots,t_n)
$$
and $K_{i+1}(t_1,\ldots,t_n)= K_{i}(t_1,\ldots,t_n)-   K_{i}(t_1,\ldots,t_{i},-\infty,t_{i+2},\ldots,t_n)$.
Then $\bigwedge_{t_j}K_{i+1}(t_1,\ldots,t_n)=0$ for each $j=1,\ldots,i+1$ and $\pi\circ K_{i+1}=F$. Moreover, due to the same argumentation as in the proof of Theorem \ref{th:4.1}, each $K_1,\ldots,K_n$ is monotone and each $K_i$ satisfies every volume condition of the form (\ref{eq:(3.5)}).

Finally, we define a mapping $K: \mathbb R^n \to \mathcal T$ by
$$
K(t_1,\ldots,t_n)= \bigvee_{(s_1,\ldots,s_n)\ll (t_1,\ldots,t_n)} K_n(s_1,\ldots,s_n),\quad t_1,\ldots,t_n \in \mathbb R.
$$
We stress that each $K(t_1,\ldots,t_n)$ is defined in $\mathcal T$.

Repeating the proof of Theorem \ref{th:4.1}, we can show that $K$ satisfies the volume condition as well as (\ref{eq:(3.3)})--(\ref{eq:(3.4)}).

If we define $u_0=\bigvee_{(t_1,\ldots,t_n)}K(t_1,\ldots,t_n)$, then $u_0 \in \mathcal T$ and $\pi(u_0)=1$.

To finish the proof, we follow literally the rest of the proof of Theorem \ref{th:4.1}, and it gives a unique $n$-dimensional observable $x$ on $E$ such that $x((-\infty,t_1)\times\cdots\times (-\infty,t_n))=F(t_1,\ldots,t_n)$ for all $t_1,\ldots,t_n \in \mathbb R$.
\end{proof}

We present also another proof using Lifting Theorem \ref{th:lifting}.

\begin{proof}
Take the effect-tribe $\mathcal T$ of fuzzy sets on $\Omega =\mathcal S(E)$ from the first proof. Due to the Loomis--Sikorski Theorem, \cite{BCD}, $\mathcal T$ satisfies (RDP) and the mapping $\pi: \mathcal T\to E$ is a surjective homomorphism preserving monotone sequences from $\mathcal T$.

Since $E$ satisfies (RDP), there is a unital monotone $\sigma$-complete po-group $(H,v)$ with interpolation such that $E=\Gamma^e_a(H,v)$ and similarly, there is a unital monotone $\sigma$-complete po-group with interpolation $(\mathcal G,1_\Omega)$ of bounded functions on $\Omega$ the pointwise ordering such that $\mathcal T =\Gamma^e_a(\mathcal G,1_\Omega)$. In addition, $\pi$ can be extended to a unique surjective homomorphism from $\mathcal G$ onto $H$ preserving all bounded monotone sequences from $\mathcal G$.

Now, we show that $\pi$ has (LP). Due to Claim from the proof of \cite[Thm 4.2]{DvLa1}, we have: If $f \sim a$, $g\sim b$ for $f,g \in \mathcal T$, $a,b \in E$, and if $a\wedge b$ $(a\vee b)$ exists in $E$, then $\min\{f,g\}\in \mathcal T$ $(\max\{f,g\}\in \mathcal T)$ and $\min\{f,g\} \sim a\wedge b$ $(\max\{f,g\}\sim a\vee b)$.

This property implies that $\pi$ has (LP).

Now, we can apply Lifting Theorem \ref{th:lifting}, so that there is an almost $n$-dimensional spectral resolution $K:\mathbb R^n\to \mathcal T$ such that $\pi\circ K=F$. Therefore, we have a system $\{K(t_1,\ldots,t_n)\colon t_1,\ldots,t_n \in \mathbb R\}$ of fuzzy sets from the effect-tribe $\mathcal T$. The rest of the proof follows the same ideas as those at the end of the  proof of Theorem \ref{th:4.1}.
\end{proof}

Now, let us assume that $E$ is a $\sigma$-complete effect algebra. We note that such an example is not necessarily an interval effect algebra. Therefore, in the notion of the volume condition, we have take a small change to have a sense in (\ref{eq:(3.5)}).

Thus, we assume that we have a mapping $F:\mathbb R^n\to E$ such that equalities (\ref{eq:(3.1)})--(\ref{eq:(3.4)}) hold and instead of (\ref{eq:(3.2)}) we have on the right-hand side the element $1$ instead of $u$, i.e. $\bigvee_{(s_1,\ldots,s_n)}F(s_1,\ldots,s_n)=1$. Whence, let for each semi-closed block $A=\langle a_1,b_1)\times\cdots\times \langle a_n,b_n)$, for each permutation $(i_1,\ldots,i_n)$ of $(1,\ldots,n)$, and for all $t_1,\ldots,t_n\in \mathbb R$ we have
\begin{align*}
&\Delta_{i_1}(a_{i_1},b_{i_1})F(t_1,\ldots,t_n)\ge 0\\
& \Delta_{i_2}(a_{i_2},b_{i_2})\Delta_{i_1}(a_{i_1},b_{i_1})F(t_1,\ldots,t_n)= \Delta_{i_1}(a_{i_1},b_{i_1})\Delta_{i_2}(a_{i_2},b_{i_2})F(t_1,\ldots,t_n)\ge 0\\
&\vdots\\
&\Delta_{i_j}(a_{i_j},b_{i_j})\cdots \Delta_{i_1}(a_{i_1},b_{i_1})F(t_1,\ldots,t_n)= \Delta_{l_j}(a_{l_j},b_{l_j})\cdots \Delta_{l_1}(a_{l_1},b_{l_1})F(t_1,\ldots,t_n)\ge 0,\\
& \quad \quad  \quad 1\le j\le n,\ l_1,\ldots, l_j\in \{i_1,\ldots,i_j\}\\
&\vdots\\
& \Delta_{i_n}(a_{i_n},b_{i_n})\cdots \Delta_{i_2}(a_{i_2},b_{i_2})\Delta_{i_1}(a_{i_1},b_{i_1})F(t_1,\ldots,t_n) =\Delta_{1}(a_{1},b_{1})\cdots \Delta_{n}(a_{n},b_{n})F(t_1,\ldots,t_n)\ge 0,
\end{align*}
then $F$ is said to be an $n$-{\it dimensional spectral resolution} with values in a $\sigma$-complete effect algebra.
We note that if $F$ is an $n$-dimensional spectral resolution in the case of an interval effect algebra $\Gamma^e_a(G,u)$, see Definition \ref{de:3.1}, then due to Proposition \ref{pr:prop}, it is an $n$-dimensional spectral resolution also in a new sense.

Given a semi-closed block $A=\langle a_1,b_1)\times \cdots\times \langle a_n,b_n)$ and an $n$-dimensional spectral resolution $F$ on a $\sigma$-complete effect algebra $E$, we put
$$
V_F(A) =\Delta_{1}(a_{1},b_{1})\cdots \Delta_{n}(a_{n},b_{n})F(t_1,\ldots,t_n).
$$

In the same way as in Section 2, we can define, for each $i=1,\ldots,n$, the operator $\Delta_i(a,b)G(s_1,\ldots,s_n)= G(s_1,\ldots,s_{i-1},b,s_{i+1},\ldots,s_n)$, where $a=-\infty$ and $b \in \mathbb R$ and $G:\mathbb R^n\to E$. If $A=C_1\times\cdots\times C_n$, where each $C_i$ is either $C_i=(-\infty,b_i)$ or $C_i=\langle a_i,b_i)$ with reals $a_i\le b_i$, we define
$$
V_F(A)=\Delta_1(a_1,b_1)\cdots \Delta_n(a_n,b_n)F(s_1,\ldots,s_n).
$$
Due to (\ref{eq:(3.4)}), $V_F(A)\ge 0$.

\begin{lemma}\label{le:4.3}
Let $F$ be an $n$-dimensional spectral resolution on a $\sigma$-complete effect algebra $E$. Let $A=C_1\times\cdots\times C_n$, where each $C_i$ is either $C_i=(-\infty,b_i)$ or $C_i=\langle a_i,b_i)$, where $a_i\le b_i$ are real numbers.
Let for a unique $i=1,\ldots,n$, $C_i= C'_i\cup C''_i$, where $C'_i$ is either $C'_i=(-\infty,c_i)$ and $C''_i=\langle c_i,b_i)$ or $C'_i=\langle a_i,c_i)$ and $C''_i=\langle c_i,b_i)$, where $a_i\le c_i \le b_i$.
Define $A_1= C_1\times \cdots \times C'_i\times \cdots\times C_n$ and $A_2=C_1\times \cdots \times C''_i\times \cdots \times C_n$.
Then $A= A_1\cup A_2$, $A_1\cap A_2=\emptyset$, and
$V_F(A)=V_F(A_1)+V_F(A_2)$.

\end{lemma}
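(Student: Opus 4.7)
The set-theoretic assertions $A=A_1\cup A_2$ and $A_1\cap A_2=\emptyset$ will be immediate from the disjoint decomposition $C_i=C'_i\cup C''_i$ at the splitting point $c_i$, so the content of the lemma is the additivity $V_F(A)=V_F(A_1)+V_F(A_2)$. My plan is to reduce this $n$-dimensional identity to a one-dimensional cancellation identity in the effect algebra by holding all coordinates other than the $i$-th fixed and peeling off the $\Delta_j$'s for $j\neq i$ first, and then to handle the two shapes of $C_i$ (bounded below vs. $(-\infty,b_i)$) in parallel.

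Concretely, fix the index $i$ at which the split occurs and define a ``sectional'' mapping $G:\mathbb R\to E$ by
$$
G(t):=\Delta_1(a_1,b_1)\cdots\widehat{\Delta_i(a_i,b_i)}\cdots\Delta_n(a_n,b_n) F(s_1,\ldots,s_{i-1},t,s_{i+1},\ldots,s_n),
$$
where the hat means omission and each $\Delta_j(a_j,b_j)$ with $C_j=(-\infty,b_j)$ is read as evaluation at $b_j$ in slot $j$. I will first check that $G$ is actually well-defined as an element of $E$, independent of the auxiliary $s_l$'s, and that $G$ is monotone non-decreasing with $\Delta_i(t,t')G=G(t')-G(t)=V_F(A_{t,t'})\ge 0$, where $A_{t,t'}$ is the block obtained from $A$ by replacing $C_i$ by $\langle t,t')$. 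Once $G$ is in place, the conclusion follows from the elementary effect-algebra identity $z-x=(y-x)+(z-y)$ whenever $x\le y\le z$, which is valid in every effect algebra (both sides add to $z-x$ when combined with $x$). In the case $C_i=\langle a_i,b_i)$ we get $V_F(A)=G(b_i)-G(a_i)=(G(c_i)-G(a_i))+(G(b_i)-G(c_i))=V_F(A_1)+V_F(A_2)$, and in the case $C_i=(-\infty,b_i)$ we get $V_F(A_1)+V_F(A_2)=G(c_i)+(G(b_i)-G(c_i))=G(b_i)=V_F(A)$.

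The main obstacle is the well-definedness of $G$ as an element of $E$. Because $E$ is only assumed to be $\sigma$-complete (not an interval of a po-group, and not required to have (RDP)), we cannot argue in an ambient $\ell$-group and must stay inside the partial addition of $E$. The point, however, is precisely that the ``enhanced'' form of the volume condition spelt out immediately before the lemma demands that every partial composition $\Delta_{i_1}(a_{i_1},b_{i_1})\cdots\Delta_{i_k}(a_{i_k},b_{i_k})F$ lies in $E$ and is non-negative, in any order of the indices. This is what guarantees that peeling off the $n-1$ operators $\Delta_j$ ($j\neq i$) produces a legitimate element of $E$ at each $t$, and that the final $\Delta_i$ step is a genuine effect-algebra difference of two comparable elements. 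Once this well-definedness is secured, the rest of the proof is a routine one-dimensional bookkeeping.
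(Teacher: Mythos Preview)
Your proof is correct and follows essentially the same approach as the paper: both define the sectional function $G(t)=\Delta_1(a_1,b_1)\cdots\widehat{\Delta_i}\cdots\Delta_n(a_n,b_n)F$ and reduce the statement to the one-dimensional telescoping identity $G(b_i)-G(a_i)=(G(b_i)-G(c_i))+(G(c_i)-G(a_i))$ in the effect algebra. You are in fact more careful than the paper about the well-definedness of $G$ in $E$ and about the unbounded case $C_i=(-\infty,b_i)$, both of which the paper leaves implicit.
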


\begin{proof}
For simplicity, we assume that $i=1$. Define $G(t_1)=\Delta_2(a_2,b_2)\cdots\Delta_n(a_n,b_n)F(t_1,\ldots,t_n)$. Then $V_F(A)=\Delta_1(a_1,b_1)G(t_1)$. Due to properties of $F$ and effect algebras, we see that $V_F(A)=G(b_1)-G(a_1)\ge 0$  so that $V_F(A)=G(b_1)-G(a_1)=(G(b_1)-G(c_1))+(G(c_1)- G(a_1))= V_F(A_1)+V_F(A_2)$.
\end{proof}

Now we introduce the notion of compatibility of two elements $a$ and $b$ of an effect algebra. We say that two elements $a,b \in E$ are {\it compatible}, and we write $a \leftrightarrow b$, if there are three elements $a_1, b_1,c$ such that $a= a_1+c,$ $b = b_1 +c$ and $a_1+b_1+c$ is defined in $E.$ For example, every two elements of an MV-algebra are compatible; indeed, if we put $c=a\wedge b$, $a_1=a-c$, and $b_1=b-c$, then $a_1+b_1+c$ exists and $a\oplus b=a_1+b_1+c$.
A {\it block} is any maximal system of mutually compatible elements of $E$. If $a\le b$, then $a \leftrightarrow b$~:  $a = 0 + a$ and $b = (b-a)+a$. We note that if $x$ is an $n$-dimensional observable and $A,B \in \mathcal B(\mathbb R^n)$, then $x(A)\leftrightarrow x(B)$. Indeed, $x(A\cup B)=x(A\setminus B)+x(A\cap B) + x(B\setminus A)$ which establishes the compatibility of $x(A)$ and $x(B)$.

\begin{theorem}\label{th:4.4}
Let $F$ be an $n$-dimensional spectral resolution on a $\sigma$-complete effect algebra $E$. There is a unique $n$-dimensional observable $x$ on $E$ such that $x((-\infty,s_1,)\times\cdots\times (-\infty,s_n))=F(s_1,\ldots,s_n)$ for all $s_1,\ldots,s_n \in \mathbb R$.
\end{theorem}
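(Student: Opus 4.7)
The plan is to construct $x$ directly inside $E$, since the absence of (RDP) prevents us from using Theorem~\ref{th:4.2} or Lifting Theorem~\ref{th:lifting}; instead, Lemma~\ref{le:4.3} and the $\sigma$-completeness of $E$ must carry the whole argument.

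First, I would let $\mathcal{R}$ be the collection of all finite disjoint unions of \emph{generalized semi-closed blocks} $A = C_1 \times \cdots \times C_n$, where each $C_i$ is either $[a_i,b_i)$ for reals $a_i \le b_i$ or $(-\infty, b_i)$ for a real $b_i$. Given any finite family $A_1,\ldots,A_k$ of generalized blocks, one obtains a common refinement into pairwise disjoint generalized blocks $B_1,\ldots,B_m$ by splitting along every coordinate value that appears among the $A_i$'s; iterated application of Lemma~\ref{le:4.3} to the minimal generalized block containing $\bigcup_i A_i$ then shows that $V_F(B_1)+\cdots+V_F(B_m)$ exists in $E$, equals $V_F(\bigcup_i A_i)$, and that each $V_F(A_i)$ is the sum of the corresponding $V_F(B_j)$'s. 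Hence $V_F$ extends unambiguously to a finitely additive, monotone set function $\mu:\mathcal{R}\to E$. The family $\{\mu(A):A\in\mathcal{R}\}$ is mutually compatible in the sense of the paragraph preceding the theorem: every finite subfamily is the family of partial sums of the atoms of a common refinement, and those atoms are summable in $E$. Condition~\eqref{eq:(3.2)} for $F$, realized along an increasing sequence of generalized blocks exhausting $\mathbb{R}^n$, gives $\bigvee_{A\in\mathcal{R}} \mu(A) = 1$.

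Second, I would extend $\mu$ to an $n$-dimensional observable $x$ on $\mathcal{B}(\mathbb{R}^n)$. For each open $U\subseteq \mathbb{R}^n$, write $U$ as an ascending union $A_k\nearrow U$ of elements of $\mathcal{R}$ (using rational coordinates, so that the sequence can be chosen countable) and define $x(U):=\bigvee_k \mu(A_k)$; this supremum exists in $E$ by $\sigma$-completeness and is independent of the particular approximating sequence because $\mu$ is monotone and finitely additive on $\mathcal{R}$. Let $\mathcal{M}$ be the class of Borel sets $A$ for which $x(A)\in E$ can be consistently defined with $x(A)+x(\mathbb{R}^n\setminus A)=1$ and satisfying ascending/descending continuity under approximations by elements of $\mathcal{R}$. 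One checks, using $\sigma$-completeness and the compatibility established in step one, that $\mathcal{M}$ contains the generalized semi-closed blocks (a $\pi$-system) and is closed under complements and countable disjoint unions, hence is a Dynkin system. By the Sierpi\'nski theorem, $\mathcal{M}=\mathcal{B}(\mathbb{R}^n)$. The axioms (i)--(iii) of an $n$-dimensional observable follow immediately from the construction: finite additivity and continuity along ascending Borel sequences are inherited from $\mu$ together with $\sigma$-completeness.

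Third, the identity $x((-\infty,s_1)\times\cdots\times(-\infty,s_n))=F(s_1,\ldots,s_n)$ is immediate from the definition of $V_F$ on generalized blocks of this form. Uniqueness follows by the same Dynkin/Sierpi\'nski argument already used at the ends of the proofs of Theorems~\ref{th:4.1} and~\ref{th:4.2}: the family of left half-spaces $(-\infty,s_1)\times\cdots\times(-\infty,s_n)$ is a $\pi$-system generating $\mathcal{B}(\mathbb{R}^n)$, so any two $n$-dimensional observables agreeing there coincide on the Dynkin system they generate, which is all of $\mathcal{B}(\mathbb{R}^n)$.

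The hardest point will be the second step, specifically the verification that $\mathcal{M}$ is closed under countable ascending unions of pairwise disjoint Borel sets. In the presence of (RDP) this is essentially automatic through an interval po-group, but here one must work with raw compatibility inside $E$: each finite partial sum has to lie in a mutually compatible family, the corresponding suprema must exist in $E$ and must interact correctly with complements. This is where the mutual-compatibility bookkeeping from step one, combined with $\sigma$-completeness of $E$, does all of the work, while conditions~\eqref{eq:(3.3)} and~\eqref{eq:(3.4)} on $F$ supply the continuity needed to pass from finite to countable operations.
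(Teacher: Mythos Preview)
Your first step is essentially the paper's own compatibility argument (via Lemma~\ref{le:4.3}), and uniqueness is handled identically. The divergence is in step two, and that is where your plan has a genuine gap.

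The class $\mathcal{M}$ is not well defined: you describe it as ``the Borel sets $A$ for which $x(A)$ can be consistently defined'', but you have not given a candidate value $x(A)$ for a general Borel $A$, only for open sets. A Dynkin argument works when you already have two globally defined functions and want to show they agree; it does not, by itself, manufacture the extension. What you would actually need is a Carath\'eodory-type extension theorem for effect-algebra-valued measures on a ring, and no such general result is available for $\sigma$-complete effect algebras without further structure. Your own closing paragraph correctly identifies closure under countable disjoint unions as the hard point, but ``mutual-compatibility bookkeeping combined with $\sigma$-completeness'' is not an argument; it is precisely the place where something substantive must happen, and nothing in your outline supplies it.

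The paper sidesteps this extension problem entirely. Its key observation, which you are missing, is structural: once the range $\{F(s_1,\ldots,s_n)\}$ is shown to be pairwise compatible, it lies in a single block of $E$, and by Rie\v{c}anov\'a's theorem \cite[Thm~3.2, Cor~4.4]{RieZ} every block of a $\sigma$-complete (lattice) effect algebra is a $\sigma$-complete MV-sub-effect algebra. One then simply invokes Theorem~\ref{th:4.1} inside that block. So the paper's proof is: (i) your step one, done only for the generators $F(s)$; (ii) reduce to the MV case via the block decomposition; (iii) apply Theorem~\ref{th:4.1}. No direct extension inside $E$ is attempted or needed.
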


\begin{proof}
Let $a=F(s_1,\ldots,s_n)$ and $b= F(t_1,\ldots,t_n)$.
By induction on $n$ we prove that, for every mapping $F:\mathbb R^n \to E$ satisfying (\ref{eq:(3.4)})--(\ref{eq:(3.5)}), we have $a\leftrightarrow b$. If $n=1$ it is trivially satisfied, if $n=2$, this was established in \cite[Thm 4.4]{DvLa1}. Assume that it holds for each $F: \mathbb R^i\to E$, satisfying (\ref{eq:(3.4)})--(\ref{eq:(3.5)}), where $i=1,\ldots,n-1$. If $(s_1,\ldots,s_n)$ and $(t_1,\ldots,t_n)$ are comparable, then clearly $a \leftrightarrow b$. Assume that $(s_1,\ldots,s_n)$ and $(t_1,\ldots,t_n)$ are not comparable. If there is $i=1,\ldots,n$ such that $s_i=t_i$, then for each $w_i \in \mathbb R$, $F_{w_i}(w_1,\ldots, w_{i-1},w_{i+1},
\ldots,w_n):=F(w_1,\ldots,w_n)$, $w_1,\ldots, w_{i-1},w_{i+1},\ldots,w_n \in \mathbb R$, is an $n-1$-dimensional mapping satisfying (\ref{eq:(3.4)})--(\ref{eq:(3.5)}), so by induction, $a$ and $b$ are compatible. If for each $i=1,\ldots,n$, we have $s_i\ne t_i$, we put $u_i=\min\{s_i,t_i\}$ and $v_i=\max\{s_i,t_i\}$ for each $i=1,\ldots,n$. Given $a_1=F(s_1,\ldots,s_n)- F(u_1,\ldots,u_n)$, $b_1=F(t_1,\ldots,t_n)-F(u_1,\ldots,u_n)$, and $c = F(u_1,\ldots,u_n)$, we have to show that $a_1+b=a_1+b_1+c$ exists in $E$.

We have $A:=(-\infty,v_1)\times \cdots \times (-\infty,v_n)=((-\infty,u_1)\cup \langle u_1,v_1))\times \cdots \times ((-\infty,u_n)\cup \langle u_n,v_n))$.
Applying Lemma \ref{le:4.3}, we see that for each element $C=C_1\times\cdots\times C_n$, where $C_i \in \{(-\infty,u_i),\langle u_i,v_i)\}$, we have $V_F(C)\in E$. This is true also for union of each finite system of mutually disjoint $C$'s,
which yields $a_1+ b_1+c$ is defined in $E$, consequently $F(s_1,\ldots,s_n)$ and $F(t_1,\ldots,t_n)$ are compatible.

Applying \cite[Thm 3.2]{RieZ} or \cite[Thm 1.10.20]{DvPu}, we see that the system $\{F(s_1,\ldots,s_n)\colon s_1,\ldots,s_n\in \mathbb R\}$ belongs to a block and this block is a $\sigma$-complete MV-sub-effect algebra of $E$, see \cite[Cor 4.4]{RieZ}. Consequently, $\{F(s_1,\ldots,s_n)\colon s_1,\ldots,s_n\in \mathbb R\}$ is in a $\sigma$-complete MV-algebra, applying Theorem \ref{th:4.1}, we can find an $n$-dimensional observable $x$ in question.
\end{proof}

Every effect-tribe $\mathcal T$ is in fact an interval in a Dedekind $\sigma$-complete po-group (not necessarily with interpolation) of bounded functions on $\Omega\ne \emptyset$, so that an $n$-dimensional spectral resolution can be defined as a mapping $F:\mathbb R^n \to \mathcal T$ such that (\ref{eq:(3.2)})--(\ref{eq:(3.5)}) hold.
If we take $n$-dimensional spectral resolutions on an effect-tribe, this is not covered by the above theorems, however, the one-to-one relationship between them and $n$-dimensional observables is relatively straightforward:

\begin{theorem}\label{th:4.5}
Let $\mathcal T$ be an effect-tribe of fuzzy sets on $\Omega \ne \emptyset$. Then for each $n$-dimensional spectral resolution $F:\mathbb R^n\to \mathcal T$, there is a unique $n$-dimensional observable $x$ on $\mathcal T$ such that $F(s_1,\ldots,s_n)=x((-\infty,s_1)\times\cdots\times (-\infty,s_n))$, $s_1,\ldots,s_n \in \mathbb R$.
\end{theorem}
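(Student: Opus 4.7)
The plan is to exploit the fact that in an effect-tribe, lattice operations and monotone limits are all computed pointwise, so the construction reduces to a parametric version of the classical Lebesgue--Stieltjes construction on $\mathbb R^n$. First, I would fix $\omega\in\Omega$ and define $F_\omega:\mathbb R^n\to[0,1]$ by $F_\omega(t_1,\ldots,t_n):=F(t_1,\ldots,t_n)(\omega)$. Evaluating axioms (\ref{eq:(3.1)})--(\ref{eq:(3.5)}) pointwise at $\omega$ shows $F_\omega$ is a bona fide multivariate distribution function: monotone, left-continuous, going to $0$ whenever one coordinate tends to $-\infty$, going to $1$ when all coordinates tend to $+\infty$, and having non-negative rectangular increments. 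By \cite[Thm 2.25]{Kal}, there is a unique probability measure $P_\omega$ on $\mathcal B(\mathbb R^n)$ with $P_\omega((-\infty,t_1)\times\cdots\times(-\infty,t_n))=F_\omega(t_1,\ldots,t_n)$.

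Second, I would define a candidate observable $x:\mathcal B(\mathbb R^n)\to[0,1]^\Omega$ by $x(A)(\omega):=P_\omega(A)$. By construction, $x(\mathbb R^n)=1_\Omega$, $x(A\cup B)=x(A)+x(B)$ pointwise for disjoint $A,B$, and $\{A_i\}_i\nearrow A$ implies $x(A_i)\nearrow x(A)$ pointwise, so $x$ would be an $n$-dimensional observable on $\mathcal T$ as soon as we know $x(A)\in\mathcal T$ for every Borel $A$.

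Third --- and this is the main point that needs verification --- I would show by a Dynkin-system argument that $x(A)\in\mathcal T$ for every $A\in\mathcal B(\mathbb R^n)$. Let $\mathcal K:=\{A\in\mathcal B(\mathbb R^n)\colon x(A)\in\mathcal T\}$. Then $\mathbb R^n\in\mathcal K$ (since $x(\mathbb R^n)=1_\Omega\in\mathcal T$), and each half-infinite rectangle $(-\infty,t_1)\times\cdots\times(-\infty,t_n)$ lies in $\mathcal K$ because $x$ of such a rectangle equals $F(t_1,\ldots,t_n)\in\mathcal T$. If $A\in\mathcal K$, then $x(A^c)=1_\Omega-x(A)\in\mathcal T$ by closure of $\mathcal T$ under complement (property (ii) of an effect-tribe). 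If $A_1,A_2,\ldots\in\mathcal K$ are pairwise disjoint, then by induction all finite partial unions satisfy $x(A_1\cup\cdots\cup A_N)=x(A_1)+\cdots+x(A_N)\in\mathcal T$ (using property (iii), which applies since these partial sums are bounded by $1_\Omega$); the partial sums form a pointwise increasing sequence converging to $x(\bigcup_n A_n)$, which lies in $\mathcal T$ by the monotone closure property (iv). Thus $\mathcal K$ is a Dynkin system containing the $\pi$-system of half-infinite rectangles, and by Sierpi\'nski's theorem \cite[Thm 1.1]{Kal}, $\mathcal K=\mathcal B(\mathbb R^n)$.

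Finally, uniqueness would follow by one more Sierpi\'nski argument: if $y$ is another $n$-dimensional observable on $\mathcal T$ agreeing with $x$ on every half-infinite rectangle, then $\mathcal H:=\{A\in\mathcal B(\mathbb R^n)\colon x(A)=y(A)\}$ is a Dynkin system containing this $\pi$-system, hence equals $\mathcal B(\mathbb R^n)$. The only conceptual hurdle is guaranteeing that the pointwise construction $\omega\mapsto P_\omega(A)$ actually lands in $\mathcal T$ rather than merely in $[0,1]^\Omega$, and this is handled cleanly by the Dynkin step together with the closure properties (ii)--(iv) of effect-tribes --- no quotient modulo meager sets is needed, which is precisely why the effect-tribe case is more direct than Theorems~\ref{th:4.1} and~\ref{th:4.2}.
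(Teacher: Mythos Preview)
Your proposal is correct and follows essentially the same approach as the paper: fix $\omega$, obtain the pointwise distribution function $F_\omega$, invoke \cite[Thm~2.25]{Kal} to produce $P_\omega$, set $x(A)(\omega)=P_\omega(A)$, and use a Dynkin/Sierpi\'nski argument for both membership in $\mathcal T$ and uniqueness. The paper's own proof is actually terser---it states that ``it is possible to show that $x$ is an $n$-dimensional observable'' without spelling out the Dynkin-system step---so your third paragraph simply supplies the details the paper omits (and which parallel the end of the proof of Theorem~\ref{th:4.1}).
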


\begin{proof}
If $F$ is an $n$-dimensional spectral resolution on the effect-tribe $\mathcal T$, then $F(s_1,\ldots,s_n)$ is a fuzzy set belonging to $\mathcal T$. Given $\omega \in \Omega$, the mapping $F_\omega: \mathbb R^n \to [0,1]$ given by $F_\omega(s_1,\ldots,s_n)=F(s_1,\ldots,s_n)(\omega)$, $s_1,\ldots,s_n \in \mathbb R$, is an $n$-dimensional distribution function on $\mathbb R^n$ and in view of \cite[Thm 2.25]{Kal}, there is a unique probability measure $P_\omega$ on $\mathcal B(\mathbb R^n)$ such that $P_\omega(s_1,\ldots,s_n)=P_\omega((-\infty,s_1)\times \cdots\times (-\infty,s_n))$, $s_1,\ldots,s_n \in \mathbb R$. If we define a mapping $x:\mathcal B(\mathbb R^n)\to \mathcal T$ by $x(A)(\omega)=P_\omega(A)$, $A \in \mathcal B(\mathbb R^n)$, then it is possible to show that $x$ is an $n$-dimensional observable such that $F(s_1,\ldots,s_n)=x((-\infty,s_1)\times\cdots\times (-\infty,s_n))$, $s_1,\ldots,s_n \in \mathbb R$. The uniqueness of $x$ follows from applications of the Sierpi\'nski Theorem, see \cite[Thm 1.1]{Kal}.
\end{proof}

The latter theorem can be applied for the case of effect algebra $\mathcal E(H)$ and $\mathcal P(H)$ which are important for the mathematical foundations of quantum mechanics. We note that $\mathcal E(H)$ is a monotone $\sigma$-complete effect algebra which is an interval in $\mathcal B(H)$, i.e. $\mathcal E(H)=\Gamma^e_a(\mathcal B(H),I)$, where $\mathcal B(H)$ is the system of all Hermitian operators on $H$ (it is a Dedekind $\sigma$-complete po-group without interpolation). The lattice properties of $\mathcal B(H)$, $\mathcal E(H)$ and $\mathcal P(H)$ are different because $\mathcal B(H)$ is due to Kadison's result, \cite[Thm 58.4]{LuZa}, an antilattice, $\mathcal E(H)$ is Dedekind monotone $\sigma$-complete, and $\mathcal P(H)$ is a complete lattice. We remind and an $n$-dimensional spectral resolution on $\mathcal E(H)$ ($\mathcal P(H)$) is a mapping $F:\mathbb R^n \to \mathcal E(H)$ ($F:\mathbb R^n \to \mathcal P(H)$) such that (\ref{eq:(3.1)})--(\ref{eq:(3.5)}) hold.

\begin{theorem}\label{th:4.6}
Let $F$ be an $n$-dimensional spectral resolution on $\mathcal E(H)$ and on $\mathcal P(H)$, respectively, where $H$ is a real, complex or quaternionic Hilbert space of any dimension. Then there is a unique $n$-dimensional observable $x$ on $\mathcal E(H)$ and on $\mathcal P(H)$, respectively, such that $F(s_1,\ldots,s_n)=x((-\infty,s_1)\times\cdots\times (-\infty,s_n))$, $s_1,\ldots,s_n \in \mathbb R$.
\end{theorem}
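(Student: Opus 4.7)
The plan is to derive Theorem~\ref{th:4.6} as an immediate corollary of Theorem~\ref{th:4.4}. The only point I would verify explicitly is that $\mathcal E(H)$ and $\mathcal P(H)$ both qualify as $\sigma$-complete effect algebras in the sense demanded by Theorem~\ref{th:4.4}. For $\mathcal E(H)=\Gamma^e_a(\mathcal B(H),I)$ this amounts to the classical fact that any order-bounded monotone sequence of Hermitian operators converges in the strong operator topology and that the limit provides its supremum (or infimum) inside the interval $[0,I]$, so that $\mathcal E(H)$ is monotone $\sigma$-complete. For $\mathcal P(H)$, monotone $\sigma$-completeness is automatic since $\mathcal P(H)$ is even a complete orthomodular lattice.

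Once this is in place, one feeds the given $n$-dimensional spectral resolution $F$ directly into Theorem~\ref{th:4.4} to obtain the unique $n$-dimensional observable $x$ satisfying $F(s_1,\ldots,s_n)=x((-\infty,s_1)\times\cdots\times(-\infty,s_n))$ for every $s_1,\ldots,s_n\in\mathbb R$; uniqueness is delivered automatically by the Sierpi\'nski--Dynkin argument that is already performed inside Theorem~\ref{th:4.4}. No further ad~hoc work is required at the level of the Hilbert space, and the same one-line invocation covers both of the two stated cases.

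The substantive content is thus entirely absorbed in the appeal to Theorem~\ref{th:4.4}, where the inductive compatibility argument shows that the family $\{F(s_1,\ldots,s_n)\colon s_1,\ldots,s_n\in\mathbb R\}$ lies inside a single block of the ambient effect algebra, which is itself a $\sigma$-complete MV-sub-effect algebra to which Theorem~\ref{th:4.1} then applies. I anticipate no serious obstacle; the only mild point worth noting is that $\mathcal E(H)$ fails the Riesz decomposition property and $\mathcal P(H)$ is not an interval effect algebra in a po-group with interpolation, so Theorems~\ref{th:4.1} and~\ref{th:4.2} cannot be invoked directly. It is precisely Theorem~\ref{th:4.4}, which requires only $\sigma$-completeness and no decomposition hypothesis, that handles both of these quantum-mechanically important structures. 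For $\mathcal P(H)$ the block in question takes the especially transparent form of a maximal commuting family of projections, i.e.\ a Boolean sub-$\sigma$-algebra of $\mathcal P(H)$, while for $\mathcal E(H)$ compatibility of the values of $F$ is realized via the common effect-algebraic refinements supplied by Lemma~\ref{le:4.3}.
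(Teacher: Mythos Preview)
Your reduction to Theorem~\ref{th:4.4} breaks down for $\mathcal E(H)$. That theorem is stated for a \emph{$\sigma$-complete} effect algebra, not merely a monotone $\sigma$-complete one, and its proof genuinely uses the lattice structure: the passage from pairwise compatibility of the values $F(s_1,\ldots,s_n)$ to the existence of a single block containing all of them invokes Rie\v canov\'a's block decomposition \cite{RieZ}, which is a theorem about lattice-ordered effect algebras, and the conclusion that this block is a $\sigma$-complete MV-sub-effect algebra likewise needs the ambient lattice. But $\mathcal E(H)$ is \emph{not} a lattice: the paper itself recalls Kadison's antilattice theorem for $\mathcal B(H)$, and the same obstruction persists in the interval $[0,I]$. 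What you verify---that bounded monotone sequences of Hermitian operators have strong limits---gives only monotone $\sigma$-completeness, which is strictly weaker and is not enough to run the block argument. So for $\mathcal E(H)$ your invocation of Theorem~\ref{th:4.4} is not justified; for $\mathcal P(H)$, which is a complete orthomodular lattice, your route would go through.

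The paper avoids this difficulty by appealing to Theorem~\ref{th:4.5} instead. One realizes $\mathcal E(H)$ concretely as an effect-tribe of fuzzy sets on the unit sphere $\Omega(H)$ via $A\mapsto\mu_A$, $\mu_A(\omega)=(A\omega,\omega)$; this map is an isomorphism of effect algebras onto $\mathcal T(H)=\{\mu_A:A\in\mathcal E(H)\}$. Theorem~\ref{th:4.5} then applies directly, because an effect-tribe sits as an interval in a genuine po-group of bounded functions, so the volume condition and the pointwise construction of the observable make sense without any lattice or (RDP) hypothesis. The same device handles $\mathcal P(H)$ uniformly.
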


\begin{proof}
Let $\Omega(H)$ be the unit sphere in $H$, i.e. $\Omega(H)=\{\omega \in H \colon \|\omega\|=1\}$. If, given $A \in \mathcal R(H)$, we define $\mu_A(\omega)=(A\omega,\omega)$, $\omega \in \Omega(H)$, then $\mu_A$ is a fuzzy set on $\Omega(H)$, and $\mathcal T(H)=\{\mu_A\colon A \in \mathcal E(H)\}$ is an effect-tribe that is isomorphic to $\mathcal E(H)$ under the isomorphism $A \mapsto \mu_A$. If we apply Theorem \ref{th:4.5}, we can establish the one-to-one correspondence in question.

If $E =\mathcal P(H)$, we establish the result in the same way as for $\mathcal E(H)$.
\end{proof}

Finally, we say that a monotone $\sigma$-complete effect algebra satisfies the {\it Loomis--Sikorski Property} if there are an effect-tribe $\mathcal T$ of fuzzy sets on $\Omega\ne\emptyset$ and a surjective $\sigma$-homomorphism $\pi$ from $\mathcal T$ onto $E$ satisfying the lifting property. For example, every $\sigma$-complete MV-algebra and monotone $\sigma$-complete effect algebra with (RDP) have the Loomis--Sikorski Property (see the second proofs of Theorems \ref{th:4.1}--\ref{th:4.2}), as well as every effect-tribe.

\begin{theorem}\label{th:4.7}
Let $E$ be a monotone $\sigma$-complete effect algebra with the Loomis--Sikorski Property. Then there is a one-to-one correspondence between $n$-dimensional spectral resolutions and $n$-dimensional observables on $E$.
\end{theorem}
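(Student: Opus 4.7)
The strategy is to follow the template of the second proofs of Theorems~\ref{th:4.1} and~\ref{th:4.2}. By the Loomis--Sikorski Property, there exist an effect-tribe $\mathcal T$ on some $\Omega\ne\emptyset$ and a surjective $\sigma$-homomorphism $\pi:\mathcal T\to E$ satisfying (LP). Since $\mathcal T$ consists of fuzzy sets with pointwise operations, $\mathcal T=\Gamma^e_a(\mathcal G,1_\Omega)$ where $\mathcal G$ is the Dedekind $\sigma$-complete $\ell$-group of bounded functions on $\Omega$ under pointwise ordering; in particular $\mathcal G$ is a po-group with interpolation, so $\mathcal T$ provides the required ambient structure for lifting.

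First I would apply Lifting Theorem~\ref{th:lifting} to $\pi$ and the given $n$-dimensional spectral resolution $F:\mathbb R^n\to E$. This yields an almost $n$-dimensional spectral resolution $K:\mathbb R^n\to\mathcal T$ with $\pi\circ K=F$, where $u_0:=\bigvee_{(t_1,\ldots,t_n)}K(t_1,\ldots,t_n)$ satisfies $\pi(u_0)=1$. Note that every combinatorial lemma in Section~4 operates by finding volume-consistent lifts in $\mathcal T$ (using the interpolation of $\mathcal G$) and consults $E$ only through its partial order and the (LP) of $\pi$; hence the argument does not require $E$ itself to be an interval in a po-group with interpolation.

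Next I would view the family $\{K(t_1,\ldots,t_n)\}$ as a spectral resolution into the effect-tribe $\mathcal T$ (more precisely into the sub-effect-tribe determined by $u_0$, which is itself an effect-tribe of fuzzy sets on $\Omega$). Applying Theorem~\ref{th:4.5} produces a unique $n$-dimensional observable $y:\mathcal B(\mathbb R^n)\to\mathcal T$ such that $y((-\infty,s_1)\times\cdots\times(-\infty,s_n))=K(s_1,\ldots,s_n)$. Setting $x(A):=\pi(y(A))$ for $A\in\mathcal B(\mathbb R^n)$ gives an $n$-dimensional observable on $E$ (because $\pi$ preserves sums, monotone limits, and the top element), with the desired relation $x((-\infty,s_1)\times\cdots\times(-\infty,s_n))=F(s_1,\ldots,s_n)$. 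Uniqueness follows by the standard Dynkin-system argument: if $x_1,x_2$ induce the same $F$, then $\{A:x_1(A)=x_2(A)\}$ is a Dynkin system containing the generating $\pi$-system of open lower quadrants, hence equals $\mathcal B(\mathbb R^n)$ by the Sierpi\'nski theorem \cite[Thm~1.1]{Kal}.

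The main obstacle I expect is the first step: Lifting Theorem~\ref{th:lifting} is stated for $\sigma$-homomorphisms between unital Dedekind monotone $\sigma$-complete po-groups with interpolation, whereas here $E$ need not embed as an interval in such a po-group. The remedy, as indicated above, is to observe that throughout Section~4 the target po-group $(H,v)$ enters only through its order relation and through the (LP) of $\pi$ applied to \emph{finite} configurations; since (LP) has been defined directly for effect-algebra homomorphisms and the source structure $\mathcal T\subset\mathcal G$ does have all the required properties, the combinatorial proofs transfer. This is the only place where care is needed; the remaining steps (constructing $y$ on $\mathcal T$ via Theorem~\ref{th:4.5} and pushing forward by $\pi$) are routine.
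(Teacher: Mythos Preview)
Your proposal is correct and follows essentially the same route as the paper's proof: invoke the Loomis--Sikorski Property to get $\pi:\mathcal T\to E$ with (LP), apply the Lifting Theorem to obtain an almost $n$-dimensional spectral resolution $K$ on $\mathcal T$, then use Theorem~\ref{th:4.5} and push forward by $\pi$. The obstacle you flag---that Theorem~\ref{th:lifting} is stated for po-groups with interpolation on both sides while here $E$ need not be such---is exactly what the paper acknowledges with the parenthetical ``(which holds also for this case)''; your justification that Section~4 uses the target only through its order and the (LP) of $\pi$ is the intended reading.
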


\begin{proof}
Let $F$ be an $n$-dimensional spectral resolution on $E$, $\mathcal T$ be an effect-tribe and let $\pi:\mathcal T\to E$ be a surjective $\sigma$-homomorphism with the lifting property. For the effect-tribe $\mathcal T$, we have $\mathcal T=\Gamma^e_a(\mathcal G,1_\Omega)$, where $(\mathcal G,1_\Omega)$ is a unital Dedekind $\sigma$-complete po-group of all bounded functions on $\Omega$ with the pointwise ordering. According to Theorem \ref{th:lifting} (which holds also for this  case), there is an almost $n$-dimensional spectral resolution $K$ on $\mathcal T$ such that $\pi\circ K=F$. The final conclusion follows easily from Theorem \ref{th:4.5} when we apply it to $K$.
\end{proof}

Finally, we note that it would be interesting to extend the one-to-one relationship between $n$-dimensional spectral resolutions and $n$-dimensional observables also for other classes of MV-algebras and effect algebras.

\section{Joint $n$-Dimensional Observables}

In the section, we apply the one-to-one relationship between $n$-dimensional spectral resolutions and $n$-dimensional observables to define three different kinds of joint $n$-dimensional observables of $n$ one-dimensional observables defined on $\sigma$-complete MV-algebras.

Now, we show that given $n$ one-dimensional observables $x_1,
\ldots, x_n$ on a $\sigma$-complete MV-algebra $M$, there is a joint $n$-dimensional observable $x$ on $M$ such that
\begin{equation}\label{eq:joint}
x((-\infty,s_1)\times\cdots\times (-\infty,s_n))=\bigwedge_{i=1}^n x_i((-\infty,s_i)),\quad s_1,\ldots,s_n \in \mathbb R.
\end{equation}

We remind the following known result.

\begin{lemma}\label{le:4.7}
Let $\{x_i\colon i \in I\}$ be a system of elements of an MV-algebra $M$.

{\rm (1)} Let $\bigvee_{i\in I} x_i$ exist in $M$, and let $x$ be any element of $M$. Then $\bigvee_{i\in I}(x\wedge x_i)$ exists in $M$ and
\begin{equation}\label{eq:2.1}
\bigvee_{i\in I}(x\wedge x_i)= x\wedge \bigvee_{i\in I}x_i.
\end{equation}

{\rm(2)} If $\bigwedge_{i\in I} x_i$ exists in $M$, then for each $x\in M$, the element $\bigwedge_{i\in I}(x\vee x_i)$ exists in $M$ and
\begin{equation}\label{eq:2.2}
\bigwedge_{i\in I}(x\vee x_i)= x\vee \bigwedge_{i\in I} x_i.
\end{equation}
\end{lemma}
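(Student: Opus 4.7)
The plan is to derive both assertions from the corresponding infinite distributive laws in the underlying unital $\ell$-group. By Mundici's categorical equivalence \cite{Mun}, there is a unique unital $\ell$-group $(G,u)$ with $M\cong \Gamma(G,u)$, and the lattice operations of $M$ are inherited from $G$. First I would verify that suprema and infima of subsets of $M=[0,u]$ computed in $M$ coincide with those computed in $G$: if $z\in G$ is an upper bound of a family $\{x_i\}\subseteq [0,u]$, then $z\ge 0$ (since each $x_i\ge 0$) and $z\wedge u$ is again an upper bound lying in $[0,u]$, so it dominates the supremum $y$ computed in $M$, giving $z\ge z\wedge u\ge y$; the dual statement for infima is analogous.

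For (1), set $y=\bigvee_i x_i$; then $x\wedge y$ is evidently an upper bound of $\{x\wedge x_i\}_{i\in I}$. To show it is the least upper bound, let $w$ be any other upper bound. Using the $\ell$-group identity $x_i=(x\wedge x_i)+(x_i-x)^+$, the inequality $x\wedge x_i\le w$ rewrites as $x_i\le w+(x_i-x)^+$ in $G$. Since translation by a constant and the positive-part operation $t\mapsto t\vee 0$ each preserve existing suprema in an $\ell$-group, taking the supremum over $i$ yields $y\le w+(y-x)^+$, and rearranging by means of $a+b=(a\vee b)+(a\wedge b)$ gives $x\wedge y\le w$, as required.

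Part (2) then follows by duality: the MV-involution $a\mapsto a'$ is an order-reversing bijection of $M$ interchanging $\vee$ and $\wedge$, so $\bigwedge_i x_i$ exists in $M$ iff $\bigvee_i x_i'$ does, with $(\bigwedge_i x_i)'=\bigvee_i x_i'$. Applying (1) to the family $\{x_i'\}$ and the element $x'$ and then taking the involution of both sides delivers (2). The only delicate point I anticipate is the routine but necessary matching of suprema between $M$ and $G$; once that is in hand, the rest is the standard infinite distributivity of $\ell$-groups, which could alternatively be quoted directly from \cite{Goo}.
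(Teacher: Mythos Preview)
Your argument is correct. Note, however, that the paper does not actually prove this lemma: it is introduced with the sentence ``We remind the following known result'' and stated without proof, being a standard infinite distributive law for MV-algebras. Your route---passing to the representing unital $\ell$-group via \cite{Mun}, checking that suprema and infima in $[0,u]$ agree with those in $G$, and then using the $\ell$-group identity $x_i=(x\wedge x_i)+(x_i-x)^+$ together with translation-invariance of suprema---is a standard derivation of infinite distributivity in $\ell$-groups, and the duality step via the MV-involution $a\mapsto a'$ to obtain (2) from (1) is exactly right. As you yourself observe, one could equally well quote the distributive law directly from \cite{Goo}; either way there is nothing in the paper to compare against beyond the bare statement.
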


\begin{theorem}\label{th:4.8}
Let $x_1,\ldots,x_n$ be one-dimensional spectral resolutions on a $\sigma$-complete MV-algebra $M$. Then there is a unique $n$-dimensional observable $x$ on $M$ such that {\rm (\ref{eq:joint})} holds.
\end{theorem}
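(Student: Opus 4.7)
The plan is to define $F:\mathbb R^n\to M$ by
$$
F(s_1,\ldots,s_n) := \bigwedge_{i=1}^n x_i((-\infty,s_i)),\quad (s_1,\ldots,s_n)\in\mathbb R^n,
$$
verify that $F$ is an $n$-dimensional spectral resolution on $M$, and then invoke Theorem \ref{th:4.1} to produce a unique $n$-dimensional observable $x$ satisfying (\ref{eq:joint}). Properties (\ref{eq:(3.1)}) and (\ref{eq:(3.4)}) are immediate: monotonicity in each coordinate is inherited from the monotonicity of $s_i\mapsto x_i((-\infty,s_i))$, and $F(s_1,\ldots,s_n)\le x_i((-\infty,t_i))\searrow 0$ as $t_i\searrow -\infty$. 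Properties (\ref{eq:(3.2)}) and (\ref{eq:(3.3)}) both reduce by induction on $n$ to the statement that if $a_k\nearrow a$ and $b_k\nearrow b$ in $M$, then $a_k\wedge b_k\nearrow a\wedge b$, which follows from two applications of Lemma \ref{le:4.7}(1): first $\bigvee_{l\ge k}(a_k\wedge b_l)=a_k\wedge b$, whence $\bigvee_k(a_k\wedge b_k)\ge a_k\wedge b$ for every $k$, and then $\bigvee_k(a_k\wedge b_k)\ge \bigvee_k(a_k\wedge b)=a\wedge b$, the reverse inequality being clear.

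The main obstacle is the volume condition (\ref{eq:(3.5)}). I would verify it by passing to extremal states. The first proof of Theorem \ref{th:4.1} supplies the injective MV-homomorphism $\phi:a\mapsto\hat a$ from $M$ into $\Gamma(C(\Omega),1_\Omega)$, where $\Omega:=\partial\mathcal S(M)$, and this extends uniquely to an injective unital $\ell$-homomorphism of the ambient unital $\ell$-groups; consequently $V_F(A)\ge 0$ holds in the ambient $\ell$-group of $M$ as soon as $\omega(V_F(A))\ge 0$ for every $\omega\in\Omega$. Since $M$ is an MV-algebra, every extremal state $\omega$ is an MV-homomorphism $M\to[0,1]$ and therefore preserves finite meets; setting $g_i^\omega(s):=\omega(x_i((-\infty,s)))$, each $g_i^\omega$ is a classical one-dimensional distribution function on $\mathbb R$, and
$$
\omega(F(s_1,\ldots,s_n))=\min_{1\le i\le n}g_i^\omega(s_i)=:H^\omega(s_1,\ldots,s_n).
$$
The function $H^\omega$ is the Fr\'echet--Hoeffding upper bound: it is the joint left-continuous distribution function of the comonotonic random vector $((g_1^\omega)^\leftarrow(U),\ldots,(g_n^\omega)^\leftarrow(U))$ induced by a single uniform random variable $U$ on $[0,1]$, so its $n$-volumes on semi-closed blocks are non-negative. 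Thus $\omega(V_F(A))\ge 0$ for every $\omega\in\partial\mathcal S(M)$, and (\ref{eq:(3.5)}) follows; the upper bound $V_F(A)\le u$ then comes from Proposition \ref{pr:prop}(2).

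Once $F$ is recognized as an $n$-dimensional spectral resolution on $M$, Theorem \ref{th:4.1} immediately delivers a unique $n$-dimensional observable $x$ on $M$ with $x((-\infty,s_1)\times\cdots\times(-\infty,s_n))=F(s_1,\ldots,s_n)$, which is exactly (\ref{eq:joint}); uniqueness of $x$ is part of the one-to-one correspondence in that theorem.
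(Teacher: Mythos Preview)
Your strategy matches the paper's: define $F(s_1,\ldots,s_n)=\bigwedge_i x_i((-\infty,s_i))$, check (\ref{eq:(3.1)})--(\ref{eq:(3.4)}), reduce the volume condition to linearly ordered quotients, and then invoke Theorem~\ref{th:4.1}. The paper phrases the reduction as ``it suffices to prove the volume condition in every linearly ordered MV-algebra'', which is exactly your passage to extremal states, since extremal states on an MV-algebra are precisely the MV-homomorphisms into $[0,1]$.

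Where you diverge is in the verification over $[0,1]$. The paper carries out an explicit induction on $n$: assuming $F_1(a_1)\le\cdots\le F_n(a_n)$, it shows
\[
\Delta_1(a_1,b_1)\cdots\Delta_n(a_n,b_n)F
=\bigwedge_{i<n}\bigl(F_i(b_i)\wedge F_n(b_n)\bigr)-\bigwedge_{i<n}\bigl(F_i(b_i)\wedge F_n(a_n)\bigr)\ge 0,
\]
the first step being that all terms containing $F_1(a_1)$ cancel. You instead recognise $\min_i g_i^\omega(s_i)$ as the Fr\'echet--Hoeffding upper bound and appeal to the comonotonic construction. Both give the same conclusion; the paper's argument is self-contained and yields a closed formula for the increment, while yours is more conceptual but imports a probabilistic fact.

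One small caveat: an extremal state $\omega$ need not be $\sigma$-additive, so $g_i^\omega(s)=\omega(x_i((-\infty,s)))$ is monotone with values in $[0,1]$ but is not \emph{a priori} a left-continuous distribution function tending to $0$ and $1$. This does not damage your argument, since the volume on a fixed block depends only on the finitely many values $g_i^\omega(a_i),g_i^\omega(b_i)$, and you can replace each $g_i^\omega$ by a genuine distribution function agreeing at those points before running the comonotonic construction; alternatively one simply quotes that $\min_i G_i$ has non-negative increments for arbitrary monotone $G_i:[0,1]\to[0,1]$, which is precisely the paper's Claim specialised to the chain $[0,1]$.
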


\begin{proof}
Let $F_i(s)=x_i((-\infty,s))$, $s \in \mathbb R$, be a one-dimensional spectral resolution corresponding to $x_i$.
The mapping $F: \mathbb R^n \to M$ defined by $F(s_1,\ldots,s_n)=\bigwedge_{i=1}^n F_i(s_i)$, $s_1,\ldots,s_n\in \mathbb R$, satisfies conditions (\ref{eq:(3.2)})--(\ref{eq:(3.4)}), see e.g. Lemma \ref{le:4.7}. We assert that $F$ is an $n$-dimensional spectral resolution. To show that, we have to prove that $F$ satisfies the volume condition (\ref{eq:(3.5)}).

The volume condition will be established if it will hold for every linearly ordered MV-algebra. Hence, we present the following claim:

\vspace{3mm}
{\it Claim. Let $F_1,\ldots,F_n$, $n\ge 2$, be functions from $\mathbb R$ into a linearly ordered MV-algebra $M$ such that each $F_i$ satisfies the volume condition.  Then $F(s_1,\ldots,s_n)=\bigwedge_{i=1}^n F_i(s_i)$, $s_1,\ldots,s_n\in \mathbb R$, satisfies the volume condition. If, given $A=\langle a_1,b_1)\times\cdots\times \langle a_n,b_n)$, we can assume that $F_1(a_1)\le F_2(a_2)\le \cdots\le F_n(a_n)$, then}
\begin{equation}\label{eq:Delta}
\Delta_1(a_1,b_1)\cdots\Delta_n(a_n,b_n)F(s_1,\ldots,s_n)= \bigwedge_{i=1}^{n-1}(F_i(b_i) \wedge F_n(b_n))- \bigwedge_{i=1}^{n-1}(F_i(b_i)\wedge F_n(a_n)).
\end{equation}

\begin{proof}
Let $A=\langle a_1,b_1)\times \cdots\times \langle a_n,b_n)$ be a semi-closed $n$-dimensional cube in $\mathbb R^n$, where $a_i\le b_i$ are real numbers for each $i=1,\ldots,n$. For simplicity, we define $x^i_0=F_i(a_i)$ and $x^i_1=F_i(b_i)$ for $i=1,\ldots,n$.

We note that a mapping $f:\mathbb R\to M$ satisfies the volume condition iff $f$ is non-decreasing. Therefore, the mapping $\widehat F_i:\mathbb R\to M$ defined by $\widehat F_i(x)=x^1_1\wedge F_i(x)$, $x \in \mathbb R$, is non-decreasing, so that it satisfies the volume condition for each $i=1,\ldots,n$. We set by $\widehat F(s_1,\ldots,s_n)=\bigwedge_{i=1}^n \widehat F_i(s_i)$, $s_1,\ldots,s_n\in \mathbb R$. We denote by $A_1=\langle a_2,b_2)\times\cdots\times \langle a_n,b_n)$.

If $V_F(A)$ is the expression on the left-hand side of (\ref{eq:Delta}), then
\begin{equation}\label{eq:Vol}
V_F(A)= \sum_{\phi\in \{0,1\}^{\{1,\ldots,n\}}} \mathrm{sgn}(\phi)\cdot\bigwedge_{i=1}^n x_{\phi(i)}^i,
\end{equation}
where $\mathrm{sgn}(\phi)$ equals $+1$ iff $|\phi^{-1}(0)|$ is even and equals $-1$ otherwise (we use that $M$ is an interval in some linearly ordered group $G$ with $0\cdot a:=0$ and $-1\cdot a:=-a$ for each $a \in M$).

To prove the volume condition, we use an induction on $n$. In the case $n=1$, the formula~\eqref{eq:Vol} has a form $x_1^1- x_0^1\ge 0$, which clearly holds. Now suppose the case $n>1$. As we have said, without loss of generality, we can assume $x^1_0\le x^2_0\le \cdots\le x^n_0$.
The expression for~\eqref{eq:Vol} equals
\begin{align*}
V_F(A)&=\sum_{\phi\in \{0,1\}^{\{2,\ldots,n\}}} -\mathrm{sgn}(\phi)\cdot \big(x^1_0\wedge\bigwedge_{i=2}^n x_{\phi(i)}^i\big)+\sum_{\phi\in \{0,1\}^{\{2,\ldots,n\}}} \mathrm{sgn}(\phi)\cdot \big(x^1_1\wedge\bigwedge_{i=2}^n x_{\phi(i)}^i\big)\\
&=\sum_{\phi\in \{0,1\}^{\{2,\ldots,n\}}} \mathrm{sgn}(\phi)\cdot \big( x^1_1\wedge\bigwedge_{i=2}^n x_{\phi(i)}^i\big)= \sum_{\phi\in \{0,1\}^{\{2,\ldots,n\}}} \mathrm{sgn}(\phi)\cdot \bigwedge_{i=2}^n(x^1_1\wedge x_{\phi(i)}^i)\\
&= V_{\widehat F}(A_1)\ge 0.
\end{align*}
The first summand vanishes as it equals $-\sum_\phi \mathrm{sgn}(\phi)\cdot x_0^1$, where $\phi$ goes through all the functions in $\{0,1\}^{\{2,\ldots,n\}}$, we see that exactly half of the functions has the negative sign. The second summand satisfies~ \eqref{eq:Delta}--\eqref{eq:Vol} by the induction hypothesis, so also \eqref{eq:Delta} is satisfied, so that Claim is established.
\end{proof}

As a final conclusion, $F$ satisfies the volume condition on $M$, and therefore, $F$ is an $n$-dimensional spectral resolution on $M$ for each $\sigma$-complete MV-algebra $M$. Applying Theorem \ref{th:4.1}, there is a unique $n$-dimensional observable $x$ on $M$ satisfying \eqref{eq:joint}.
\end{proof}

The $n$-dimensional observable $x$ from the latter theorem is said to be an {\it $n$-dimensional meet joint observable} of $x_1,\ldots, x_n$. We note that using the Sierpi\'nski Theorem, we can show

\begin{equation}\label{eq:joint1}
x(\pi_i^{-1}(A))=x_i(A),\quad A \in \mathcal B(\mathbb R),\ i=1,\ldots,n,
\end{equation}
where $\pi_i:\mathbb R^n \to \mathbb R$ is the $i$-th projection.

In addition, from \eqref{eq:joint1}, we can prove

\begin{equation}\label{eq:joint2}
x(A_1\times\cdots\times A_n)\le \bigwedge_{i=1}^n x_i(A_i),\quad A_1,\ldots, A_n \in \mathcal B(\mathbb R),
\end{equation}
and in general, it can happen that in \eqref{eq:joint2} we have strict inequality.

Now, we define a second type of joint $n$-dimensional observables on MV-algebras with product.

We say that an MV-algebra $M$ admits a {\it product}, see \cite{DiDv}, if there is a commutative and associative binary operation $\cdot$ on $M$ satisfying for all
$a,b,c \in M$
\begin{itemize}
\item[(i)] if $a + b$ is defined in $M$, then $(a\cdot c) + (b \cdot c)$ and
$(c \cdot a) + (c \cdot b)$ exist and

\begin{align*}
(a+b)\cdot c &= (a\cdot c) + (b \cdot c),\\
c\cdot (a + b) &= (c \cdot a) + (c \cdot b),
\end{align*}

\item[(ii)] $a\cdot 1 = a$,
\end{itemize}
and we say that $M$ is a {\it product MV-algebra}. For example, the MV-algebra of the real interval $M=\Gamma(\mathbb R,1)$ admits a product which is the standard product of reals. Basic properties of product MV-algebras are (a)
$a \cdot 0 = 0 = 0 \cdot a$, (b) if $a\le b$, then for any $c \in M$, $a\cdot c \le b\cdot c$ and $c \cdot a \le c \cdot b$. We note that if $M$ is $\sigma$-complete, then it is easy to show that $\{a_i\}_i\nearrow a$ imply $\{b\cdot a_i\}_i\nearrow b\cdot a$ for each $b \in M$, see \cite[Thm 5.8]{DvLa1}.

\begin{theorem}\label{th:4.9}
Let $x_1,\ldots,x_n$ be one-dimensional observables, $n\ge 1$, on a $\sigma$-complete MV-algebra $M$ with a product, and let $F_1,\ldots,F_n$ be the corresponding one-dimensional spectral resolutions. If we define
$$
F(s_1,\ldots,s_n)=\prod_{i=1}^nF_i(s_i),\quad s_1,\ldots,s_n\in \mathbb R,
$$
then $F$ is an $n$-dimensional observable on $M$ and there is a unique $n$-dimensional observable $x$, such that
\begin{equation}\label{eq:6.3}
x(A_1\times\cdots \times A_n)=\prod_{i=1}^n x_i(A_i),\quad A_1,\ldots,A_n\in \mathcal B(\mathbb R).
\end{equation}
\end{theorem}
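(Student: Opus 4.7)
The plan is threefold: first, verify that $F$ is an $n$-dimensional spectral resolution on $M$; second, apply Theorem~\ref{th:4.1} to obtain a unique $n$-dimensional observable $x$ on $M$ with $x((-\infty,s_1)\times\cdots\times(-\infty,s_n))=F(s_1,\ldots,s_n)$; and third, extend the product formula from these generating rectangles to all Borel rectangles via an iterated Dynkin-system argument.

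For the first step, monotonicity of $F$, the vanishing of $F$ as any $s_i\to-\infty$, the convergence $F\nearrow 1$ when all $s_i\to+\infty$, and left-continuity of $F$ descend coordinate-by-coordinate from the properties of the $F_i$, using three features of the product on $M$: order-preservation $a\le b\Rightarrow a\cdot c\le b\cdot c$, the annihilator $a\cdot 0=0$, and ascending continuity $\{a_j\}_j\nearrow a\Rightarrow \{b\cdot a_j\}_j\nearrow b\cdot a$ (stated in the text). The critical point is the volume condition. Property (i) of the product gives, for $a\le b$ in $M$, the cancellation identity $(b-a)\cdot c=b\cdot c-a\cdot c$, since $b=a+(b-a)$ is defined in $M$. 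Iterating this identity in an induction on $k$, one obtains
\begin{equation*}
\Delta_1(a_1,b_1)\cdots\Delta_k(a_k,b_k)F(s_1,\ldots,s_n)=\Big(\prod_{i=1}^{k}(F_i(b_i)-F_i(a_i))\Big)\cdot\prod_{i=k+1}^{n}F_i(s_i),
\end{equation*}
and setting $k=n$ yields $\Delta_1\cdots\Delta_n F=\prod_{i=1}^{n}(F_i(b_i)-F_i(a_i))\ge 0$, a product of elements of $M$.

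The second step is immediate from Theorem~\ref{th:4.1} applied to the spectral resolution $F$, which produces the unique $x$. For the third step, I iterate over the coordinates. Fixing $j\in\{1,\ldots,n\}$ together with intervals $(-\infty,s_1),\ldots,(-\infty,s_{j-1})$ and Borel sets $A_{j+1},\ldots,A_n$ for which \eqref{eq:6.3} is already known, consider
\begin{equation*}
\mathcal{H}_j=\Big\{A\in\mathcal{B}(\mathbb{R}):x\big((-\infty,s_1)\times\cdots\times(-\infty,s_{j-1})\times A\times A_{j+1}\times\cdots\times A_n\big)=\prod_{i<j}F_i(s_i)\cdot x_j(A)\cdot\prod_{i>j}x_i(A_i)\Big\}.
\end{equation*}
Both sides define $\sigma$-additive $M$-valued set functions of $A$ (using $\sigma$-additivity of $x$ and $x_j$ together with distributivity and ascending continuity of the product in $M$); they coincide on the $\pi$-system of intervals $(-\infty,t)$ by the outer induction hypothesis; closure of $\mathcal{H}_j$ under complement uses $x_j(A^c)=1-x_j(A)$ together with the cancellation identity and the ascending-limit computation $x(B\times\mathbb{R}\times A_{j+1}\times\cdots)=\prod_{i<j}F_i(s_i)\cdot\prod_{i>j}x_i(A_i)$. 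The Sierpi\'nski theorem \cite[Thm 1.1]{Kal} then gives $\mathcal{H}_j=\mathcal{B}(\mathbb{R})$, and $n$ iterations yield \eqref{eq:6.3}. Uniqueness of $x$ is already guaranteed by Theorem~\ref{th:4.1}.

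I expect the main obstacle to be the volume condition computation: one must respect the partial distributivity of the product, which is available only when the relevant sum is defined in $M$, and verify inductively that all intermediate products of differences $F_i(b_i)-F_i(a_i)$ remain inside $M$ rather than escaping to the enveloping unital $\ell$-group.
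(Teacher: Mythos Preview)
Your proposal is correct and follows essentially the same route as the paper: verify that $F$ is an $n$-dimensional spectral resolution (with the volume condition reducing to $\prod_{i=1}^n(F_i(b_i)-F_i(a_i))\ge 0$ via distributivity of the product over defined sums), invoke Theorem~\ref{th:4.1} to obtain the unique observable $x$, and then pass from half-open rectangles to arbitrary Borel rectangles by an iterated Sierpi\'nski/Dynkin argument. The paper is terser than you are---it simply states the factorized volume formula and the phrase ``using mathematical induction and applying the Sierpi\'nski Theorem''---but the substance is identical; your worry about intermediate products escaping $M$ is unfounded since $a\cdot b\le a\cdot 1=a\le 1$ keeps every product inside $M$.
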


\begin{proof}
The mapping $F$ satisfies (\ref{eq:(3.1)})--(\ref{eq:(3.4)}). To show the volume condition, let a semi-closed rectangle  $\langle a_1,b_1)\times \cdots\times \langle a_n,b_n)$ be given. If we denote by $V^{(b_1,\ldots,b_n)}_{(a_1,\ldots,a_n)}(F)$ the left-hand side of volume condition (\ref{eq:(3.5)}), then we have
$$ V^{(b_1,\ldots,b_n)}_{(a_1,\ldots,a_n)}(F)=\prod_{i=1}^n (F_i(b_i)-F_i(a_i))\ge 0.$$
Applying Theorem \ref{th:4.1}, we see that there is a unique $n$-dimensional observable of $x_1,\ldots,x_n$ determined by the $n$-dimensional spectral resolution $F$.

Using mathematical induction and applying the Sierpi\'nski Theorem, it is possible to establish (\ref{eq:6.3}).
\end{proof}

The $n$-dimensional observable $x$ from Theorem \ref{th:4.9} is said to be an {\it $n$-dimensional product joint observable} of $x_1,\ldots, x_n$. Clearly, we have $$
\prod_{i=1}^nF_i(t_i)\le \bigwedge_{i=1}^n F_i(t_i),\quad t_1,\ldots,t_n\in \mathbb R,
$$
however, it can happen that the $n$-dimensional meet joint observable is different of the $n$-dimensional product joint observable of one-dimensional observables $x_1,\ldots,x_n$.

Now, we formulate three open questions concerning also a possible third kind of an $n$-dimensional joint observable.

\subsection{$n$-dimensional Spectral $\odot$-joint Observable}
Let $x_1,\ldots,x_n$ be one-dimensional observables, $n\ge 1$, on a $\sigma$-complete MV-algebra $M$, and let $F_1,\ldots,F_n$ be the corresponding one-dimensional spectral resolutions. We define
\begin{equation}\label{eq:6.4}
F^\odot(s_1,\ldots,s_n)=F_1(s_1)\odot \cdots \odot F_n(s_n),\quad s_1,\ldots,s_n \in \mathbb R.
\end{equation}
It is clear that $F^\odot$ satisfies (\ref{eq:(3.1)})--(\ref{eq:(3.4)}).
If $n=2$, in \cite{DvLa2}, there was shown that $F^\odot$ is a two-dimensional spectral resolution.

Show that this is true for each $n\ge 3$.

In the positive answer, $F^\odot$ can be extended to a unique $n$-dimensional observable $x^\odot$ such that
$$F^\odot(s_1,\ldots,s_n)=x^\odot((-\infty,s_1,)\times \cdots\times (-\infty,s_n)),\quad s_1,\ldots,s_n \in \mathbb R.
$$
The observable $x^\odot$ is said to be an $n$-{\it dimensional $\odot$-joint observable} of $x_1,\ldots,x_n$.

\subsection{Group Joint Observables}

Let $x_1,\ldots,x_k$ be $n_1-,\ldots,n_k-$dimensional observables on a $\sigma$-complete MV-algebra $M$ such that $n=n_1+\cdots+n_k$. Let $F_i$ be the $n_i$-dimensional spectral resolutions corresponding to $x_i$, $i=1,\ldots,k$.
Define for all $s_1,\ldots,s_n \in \mathbb R$ the mappings
\begin{equation}\label{eq:6.5}
F(s_1,\ldots,s_n)=F_1(s_1,\ldots,s_{n_1})\wedge F_2(s_{n_1+1},\ldots s_{n_1+n_2})\wedge \cdots\wedge F_k(s_{n_1+\cdots+n_{k-1}+1},\ldots,s_n)
\end{equation}
and
\begin{equation}\label{eq:6.6}
F^\odot(s_1,\ldots,s_n)=F_1(s_1,\ldots,s_{n_1})\odot F_2(s_{n_1+1},\ldots s_{n_1+n_2})\odot \cdots\odot F_k(s_{n_1+\cdots+n_{k-1}+1},\ldots,s_n).
\end{equation}
Exhibit whether $F$ and $F^\odot$ are $n$-dimensional spectral resolutions.

We note that it can happen that $F$ does not satisfy the volume condition. Indeed, let $(G,u)=(\mathbb R,1)$ and define a one-dimensional observable $x(\{0\})=0.05$, $x(\{1\})=0.3$, $x(\{2\})=0.65$ and a two-dimensional observable $y(\{0,0\})=0.1$, $y(\{0,1\})=0.1$, $y(\{1,0\})=0.2$, $y(\{2,2\})=0.6$. Then for $A=\langle 0.1,1.1)\times \langle 0.1,1.1)\times \langle 0.1,1.1)$ we have $f_0=0.05$, $f_1=0.35$ and $g_{00}=0.1$, $g_{01}=0.2$, $g_{10}=0.3$, $g_{11}=1.0$. Then $(f_1\wedge g_{11})+ (f_1\wedge g_{00})+ (f_0\wedge g_{10})+ (f_0\wedge g_{01})= 0.55< 0.6= (f_1\wedge g_{10})+(f_1\wedge g_{01})+(f_0\wedge g_{11})+(f_0\wedge g_{00})$.

On the other hand, if in (\ref{eq:6.5}), every $F_i$ is of the form (\ref{eq:joint}), then $F$ is an $n$-dimensional spectral resolution.

\subsection{$n$-dimensional Observables and Joint Distribution}
If $s$ is a $\sigma$-additive state on a $\sigma$-complete MV-algebra $M$ and $x$ is an $n$-dimensional observable, then the mapping $s_x: \mathcal B(\mathbb R^n)\to[0,1]$ defined by
$$
s_x(A)=s(x(A)),\quad A \in \mathcal B(\mathbb R^n),
$$
is a probability measure on $\mathcal B(\mathbb R^n)$. Therefore, if $x_1,\ldots,x_n$ are one-dimensional observables and if $x$ is their $n$-dimensional meet joint observables, then we have
$$
s_x(E_1\times\cdots\times E_n)=s(\bigwedge_{i=1}^n x_i(E_i)),\quad E_1,\ldots, E_n \in \mathcal B(\mathbb R),
$$
and $s_x$ is a joint distribution of $x_1,\ldots,x_n$ in the state $s$. Therefore, this can be useful for a multivariate analysis. For example, if $x$ is a one-dimensional observable and $f:\mathbb R\to \mathbb R$ is a Borel measurable function, then $f(x):\mathcal B(\mathbb R)\to M$ defined by $f(x)(E)=x(f^{-1}(E))$, $E \in \mathbb R$, is also an observable on $M$.  For example if $f(t)=t^k$, we write $f(x)=x^k$. Then the $k$-th moment of $x$ in the state $s$ is
$$ Exp(x^k)=\int_{\mathbb R} t^k \dx s_x(t)
$$
if the integral exists and is finite.

If $x$ is a two-dimensional meet joint observable of $x_1$ and $x_2$, we can define for example
$$
\mu_{11}=\int_{\mathbb R}\int_{\mathbb R} uv\dx s_x(u,v)
$$
if the double integral exists and is finite, etc.

Analogous ideas can be done also for an $n$-dimensional product joint observable of $x_1,\ldots,x_n$.

\section{Conclusion}

Any measurement of $n$ observables in quantum structures is modeled by an $n$-dimensional observable which is a kind of a $\sigma$-homomorphism from the Borel $\sigma$-algebra $\mathcal B(\mathbb R^n)$ into the quantum structure which is a monotone $\sigma$-complete effect algebra or a $\sigma$-complete MV-algebra. Every observable $x$ restricted to infinite intervals of the form $(-\infty,t_1)\times\cdots\times (-\infty,t_n)$, $t_1,\ldots,t_n\in \mathbb R$, defines an $n$-dimensional spectral resolution which is characterized as a mapping from $\mathbb R^n$ into the quantum structure that is monotone, with non-negative increments, and is going to $0$ if one variable goes to $-\infty$ and going to $1$ if all variables go to $+\infty$.

Our main task is to find conditions when given an $n$-dimensional spectral resolution can be extended to a (unique) $n$-dimensional observable. To show that, we studied a possibility of lifting an $n$-dimensional spectral resolution from one effect algebra to the second one.
In the paper we have exhibit the following situation: Let a monotone $\sigma$-complete effect algebra $E=\Gamma^e_a(H,v)$ be a $\sigma$-homomorphic image of another monotone $\sigma$-complete effect algebra $\Gamma^e_a(G,u)$ under homomorphism $\pi$. We show that given an $n$-dimensional spectral resolution $F$ on $E$ can be lifted to an almost $n$-dimensional spectral resolution $K$ on $\Gamma^e_a(G,u)$ such that $\pi\circ K=F$, see Theorem \ref{th:lifting}.

Having this lifting, we are able to show that every $n$-dimensional spectral resolution on a $\sigma$-complete MV-algebra, Theorem \ref{th:4.1}, or on a monotone $\sigma$-complete effect algebra with (RDP), Theorem \ref{th:4.2}, can be extended to a unique $n$-dimensional observable. In addition, we present two kinds of proofs of these extensions, one using the Loomis--Sikorski representation theorem, the second one is using the mentioned lifting. Moreover, we extended the one-to-one correspondence between $n$-dimensional spectral resolutions and $n$-dimensional observables also for $\sigma$-complete effect algebras, Theorem \ref{th:4.4}, effect-tribes, Theorem \ref{th:4.5}, and monotone $\sigma$-complete effect algebras with the Loomis--Sikorski Property, Theorem \ref{th:4.7}.

Finally, we apply the extension of $n$-spectral resolution to define three different kinds of joint $n$-dimensional observable of $n$ one-dimensional observables on $\sigma$-complete MV-algebras, see Theorem \ref{th:4.8} and Section 6.

\end{document}